\title{Fluctuation exponents for stationary exactly solvable lattice polymer models via a Mellin transform framework}
\author{Hans Chaumont\thanks{Department of Mathematics, University of Wisconsin. Madison, WI, USA.\newline \href{mailto:chaumont@wisc.edu}{chaumont@wisc.edu}} \and Christian Noack\thanks{Department of Mathematics, University of Wisconsin. Madison, WI, USA.\newline \href{mailto:cnoack@wisc.edu}{cnoack@wisc.edu}}}
\newtheorem{theorem}{Theorem}[section]
\newtheorem{corollary}[theorem]{Corollary}
\newtheorem{lemma}[theorem]{Lemma}
\newtheorem{definition}[theorem]{Definition}
\newtheorem{proposition}[theorem]{Proposition}
\theoremstyle{remark}
\newtheorem{remark}[theorem]{Remark}
\newtheorem{hypothesis}[theorem]{Hypothesis}
\numberwithin{equation}{section}
\newcommand{\p}{\partial}
\newcommand{\Z}{\mathbb{Z}}
\newcommand{\C}{\mathbb{C}}
\newcommand{\N}{\mathbb{N}}
\newcommand{\R}{\mathbb{R}}
\newcommand{\mbb}{\mathbb}
\newcommand{\ind}{\mathbbm{1}}
\newcommand{\E}{\mathbb{E}}
\newcommand{\lam}{\lambda}
\newcommand{\Var}{\mathbb{V}\text{ar}}
\renewcommand{\t}{\widetilde}
\newcommand\spt{\text{supp}}
\newcommand{\Eh}{\widehat{\E}}
\newcommand{\ta}{\t{a}}
\renewcommand{\hat}{\widehat}
\renewcommand{\tilde}{\widetilde}
\newcommand\floor[1]{\lfloor#1\rfloor}
\renewcommand{\P}{\mathbb{P}}
\newcommand\abullet{{\scaleobj{0.6}{\bullet}}}
\newcommand{\Cov}{\C\text{ov}}
\DeclareMathOperator{\supp}{supp}
\renewcommand{\centerdot}{\abullet}
\begin{document}
\maketitle

\begin{abstract}
We develop a Mellin transform framework which allows us to simultaneously analyze the four known exactly solvable $1+1$ dimensional lattice polymer models: the log-gamma, strict-weak, beta, and inverse-beta models. Using this framework we prove the conjectured fluctuation exponents of the free energy and the polymer path for the stationary point-to-point versions of these four models. The fluctuation exponent for the polymer path was previously unproved for the strict-weak, beta, and inverse-beta models.
\end{abstract}

\noindent \textbf{Keywords:} directed polymer; exactly solvable models; integrable models; Burke's theorem; partition function; fluctuation exponent; scaling exponent.

\noindent \textbf{AMS MSC 2010:} 60K35; 60K37; 82B20; 82B23; 82D60.

\section{Introduction}
The directed polymer in a random environment was first introduced by Huse and Henley \cite{HH1985} to model the interaction between a long chain of molecules and random impurities. This was later reformulated by Imbrie and Spencer \cite{IS1988} as a random walk in a random environment. See the recent lectures notes by Comets \cite{Comets2017} for additional historical background and a survey of techniques used to study directed polymers. In the $1+1$ dimensional case, a large class of polymer models are expected to lie in the KPZ universality class. For this class, the polymer path and free energy fluctuation exponents are conjectured to be $2/3$ and $1/3$, respectively, and the probability distribution of the rescaled free energy is conjectured to converge to the Tracy-Widom GUE distribution.

There are a few $1+1$ dimensional polymer models for which these results have been proved. Bal\'azs, Quastel, and Sepp\"al\"ainen \cite{BQS2011}, prove the fluctuation exponents for a Hopf-Cole solution to the KPZ equation with Brownian initial condition. This solution can be interpreted as the free energy of a stationary continuum directed polymer.
Amir, Corwin, and Quastel \cite{ACQ2011} study the Hopf-Cole solution to the KPZ equation with narrow-wedge initial condition and prove Tracy-Widom limit distribution for large time. For the O'Connell-Yor semi-discrete Brownian directed polymer \cite{OY2001}, the fluctuation exponents are proved by Sepp\"al\"ainen and Valk\'o \cite{SV2010}, and the limit distribution is proved in Borodin, Corwin  \cite{BC2014} and  Borodin, Corwin, Ferrari \cite{BCF2014}. 

In the setting of lattice directed polymers, there are four models for which results about the scaling exponents or limit distributions are known. The log-gamma directed polymer was introduced by Sepp\"al\"ainen in \cite{S2012}, where the fluctuation exponents were proved. The limit distribution result was proved by Borodin, Corwin, and Remenik \cite{BCR2013}. The strict-weak polymer model was introduced by Sepp\"al\"ainen and its limit distribution was proved simultaneously through different methods by Corwin, Sepp\"al\"ainen, Shen in \cite{CSS2015} and O'Connell, Ortmann in \cite{OO2015}. The beta directed polymer was introduced by Barraquand and Corwin in \cite{BC2014}, where its limit distribution was also calculated. The fourth model is the inverse-beta model, introduced by Thiery and Le Doussal in \cite{TL2015}, in which they conjecture a formula for the Laplace transform of the polymer partition function and, contingent on this conjecture, show Tracy-Widom limit distribution for the rescaled free energy.

In this paper we provide a Mellin transform framework with which we are able to treat these four lattice polymer models simultaneously and prove the fluctuation exponents of the free energy and the polymer path for their stationary versions. While for the log-gamma model these results were previously shown \cite{S2012}, for the strict-weak, beta, and inverse-beta models, the path fluctuation results are new.

Our methods rely upon a Burke-type stationarity property that each of these models possesses. This stationarity, along with a coupling argument, are used to prove a variance formula which is then amenable to analysis. This method was first used by Cator and Groeneboom \cite{CG2006} to prove the order of the variance of the length of the longest weakly North-East path in Hammersley's process with sources and sinks. In \cite{BCS2006}, Bal\'azs, Cator, and Sepp\"al\"ainen adapt this method to prove the order of the fluctuations of the passage time and the fluctuations of the maximal path for last passage percolation with exponential weights. Sepp\"al\"ainen \cite{S2012} used this method to prove the order of the fluctuation of the free energy and the polymer path fluctuations for the point-to-point log-gamma model with stationary boundary conditions, and upper bounds on the fluctuation exponents for the non-stationary point-to-point and point-to-line models. Sepp\"al\"ainen and Valk\'o \cite{SV2010} prove the scaling exponents for the O'Connell-Yor polymer, and Flores, Sepp\"al\"ainen, and Valk\'o \cite{FSV2014} extend the result to the intermediate disorder regime. Our paper closely follows the methods in \cite{S2012}.

In our related paper \cite{CN2017} we prove that in the setting of $1+1$ dimensional lattice directed polymers, the only four models possessing the Burke-type stationarity property are the log-gamma, strict-weak, beta, and inverse-beta models.
\\

Notation: $\N= \{1,2,\ldots\}$, $\Z_+ = \{0,1,\ldots\}$, and $\R$ denotes the real numbers. Let $\floor{x}$ denote the greatest integer less than or equal to $x$. Let $\vee$ and $\wedge$ denote maximum and minimum, respectively. Given a real valued function $f$, let $\supp(f)= \{x: f(x)\neq 0\}$ denote the support of the function $f$ (note that we do not insist on taking the closure of this set). Given a random variable $X$ with finite expectation, we let $\overline{X}= X-\E[X]$.  For $A\subset \R$ write $-A=\{ -a : a\in A\}$ and $A^{-1}=\{a^{-1}: a\in A \}$ assuming that $0\notin  A$. The symbol $\otimes$ is used to denote (independent) product distribution.

\subsection{The polymer model}

On each edge $e$ of the $\Z_+^2$  lattice we place a positive random weight. The superscripts 1 and 2 will be used to denote horizontal and vertical edge weights, respectively. For $z\in \N^2$, let $Y^1_z$ and $Y^2_z$ denote the horizontal and vertical incoming edge weights. We assume that the collection of pairs $\{(Y^1_z,Y^2_z)\}_{z\in \N^2}$ is independent and identically distributed with common distribution $(Y^1,Y^2)$, but do not insist that $Y^1_z$ is independent of $Y^2_z$. Call this collection the \emph{bulk weights}. For $x\in \N\times \{0\}$, let $R^1_x$ denote the horizontal incoming edge weight, and for $y\in \{0\}\times \N$, let $R^2_y$ denote the vertical incoming edge weight. We assume the collections $\{R^1_x\}_{x\in \N\times \{0\}}$ and $ \{R^2_y\}_{y\in \{0\}\times \N}$ are independent and identically distributed with common distributions $R^1$ and $R^2$, and refer to them as the \emph{horizontal} and \emph{vertical} \emph{boundary weights}, respectively. We further assume that the horizontal boundary weights, the vertical boundary weights, and the bulk weights are independent of each other. This assignment of edge weights is illustrated in Figure \ref{fig-weights}. We call 
\begin{equation}\label{environment}
\omega = \{R^1_x, R^2_y, (Y^1_z,Y^2_z): x\in \N\times \{0\}, y\in \{0\}\times \N, z\in \N^2\}
\end{equation}
the \emph{polymer environment}. We use $\P$ and $\E$ to denote the probability measure and corresponding expectation of the polymer environment.

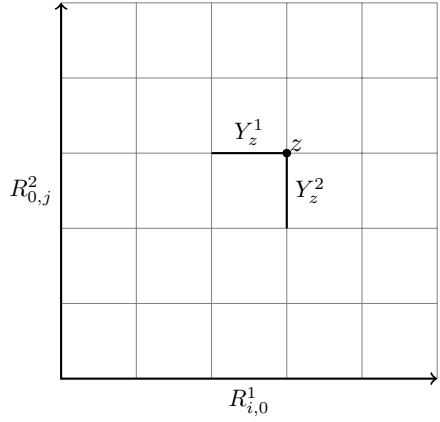
\begin{figure}[ht]
  \centering
  \begin{tikzpicture}
    
   \draw [help lines] (0,0) grid (5,5);	  
   \draw [thick, <->] (0,5) -- (0,0) -- (5,0);
   \draw [thick] (2,3) -- (3,3) -- (3,2);
   \node [left,scale=.9] at (0,2.5) {$R^2_{0,j}$};
   \node [below,scale=.9] at (2.5,0) {$R^1_{i,0}$};
   \node [above,scale=.9] at (2.5,3) {$Y^1_z$};
   \node [right,scale=.9] at (3,2.5) {$Y^2_z$};
   \draw[fill] (3,3) circle [radius=.05];
   \node [above right] at (2.91,2.91) {$z$};

  \end{tikzpicture}
    \caption{Assignment of edge weights.}
  \label{fig-weights}
\end{figure}

A path is weighted according to the product of the weights along its edges. For $(m,n)\in
\Z_+^2\setminus \{(0,0)\}$ we define a probability measure on all up-right paths from
$(0,0)$ to $(m,n)$. See Figure \ref{up-right path} for an example of an up-right path. Let $\Pi_{m,n}$ denote the collection of all such
paths. We identify paths $x_\abullet = (x_0, x_1, \ldots, x_{m+n})$ by
their sequence of vertices, but also associate to paths their sequence
of edges $(e_1, \ldots, e_{m+n})$, where $e_i = \{x_{i-1}, x_i\}$. Define the quenched polymer measure on $\Pi_{m,n}$,
\[ 
Q_{m,n} (x_\abullet) := \frac{1}{Z_{m,n}}
\prod_{i=1}^{m+n} \omega_{e_i}, 
\] 
where $\omega_e$ is the weight associated to the edge $e$ and
\[ 
Z_{m,n} := \sum_{x_\abullet \in \Pi_{m,n}} \prod_{i=1}^{m+n}
\omega_{e_i} 
\] 
is the associated partition function. At the origin, define $Z_{0,0}:=1$. Taking the expectation $\E$ of the quenched measure with respect to the edge weights gives the annealed measure on $\Pi_{m,n}$,
\[
P_{m,n}(x_\centerdot) :=\E[Q_{m,n}(x_\centerdot)].
\]
The annealed expectation will be denoted by $E_{m,n}$.

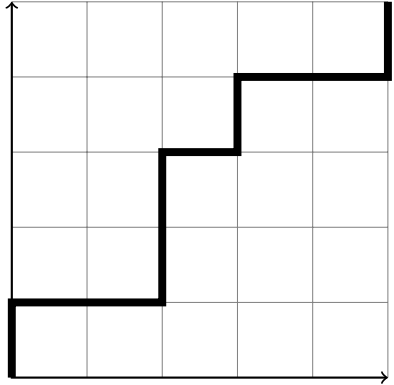
\begin{figure}[ht]
\centering
	\begin{tikzpicture}
  		\draw [help lines] (0,0) grid (5,5);	  
   		\draw [thick, <->] (0,5) -- (0,0) -- (5,0);
   		\draw [line width=3pt] (0,0) -- (0,1) -- (1,1) -- (2,1) -- (2,2) -- (2,3) -- (3,3) -- (3,4) --(4,4) -- (5,4) -- (5,5);
	\end{tikzpicture}
	\caption{An up-right path from $(0,0)$ to $(5,5)$.}
    \label{up-right path}
\end{figure}
We specify the edge weight distributions for the four stationary polymer models.
The notation $X\sim \text{Ga}(\alpha,\beta)$ is used to denote that a random variable is gamma$(\alpha,\beta)$ distributed, i.e.\ has density $\Gamma(\alpha)^{-1} \beta^\alpha x^{\alpha-1}e^{-\beta x}$ supported on $(0,\infty)$, where $\Gamma(\alpha) = \int_0^\infty x^{\alpha-1} e^{-x} dx$ is the gamma function. $X\sim \text{Be}(\alpha,\beta)$ is used to say that $X$ is beta$(\alpha,\beta)$ distributed, i.e.\ has density $\frac{\Gamma(\alpha+\beta)}{\Gamma(\alpha) \Gamma(\beta)} x^{\alpha-1}(1-x)^{\beta-1} $ supported on $(0,1)$. We then use $X\sim \text{Ga}^{-1}(\alpha,\beta)$ and $X\sim \text{Be}^{-1}(\alpha,\beta)$ to denote that $X^{-1}\sim \text{Ga}(\alpha,\beta)$ and $X^{-1}\sim \text{Be}(\alpha,\beta)$, respectively. We also use $X\sim \left(\text{Be}^{-1}(\alpha,\beta) -1\right)$ to denote that $X+1\sim\text{Be}^{-1}(\alpha,\beta)$.

\begin{itemize}
\item \textbf{Inverse-gamma (IG)}: This is also known as the log-gamma model. Assume $\mu>\theta>0, \, \beta>0$ and
\begin{equation}
\begin{gathered}
R^1 \sim \text{Ga}^{-1}(\mu-\theta,\beta)\qquad  R^2\sim \text{Ga}^{-1}(\theta,\beta)\\
(Y^1,Y^2)= (X,X) \qquad \text{where} \qquad X\sim \text{Ga}^{-1}(\mu,\beta).
\end{gathered}\label{model-IG}
\end{equation}
\item \textbf{Gamma (G)}: This is also known as the strict-weak model. Assume $\theta,\,\mu,\,\beta>0$ and 
\begin{equation}
\begin{gathered}
R^1\sim \text{Ga}(\mu+\theta,\beta)\qquad R^2\sim \text{Be}^{-1}(\theta,\mu)\\
(Y^1,Y^2) = (X, 1) \qquad \text{where} \qquad X\sim \text{Ga}(\mu,\beta).
\end{gathered}\label{model-G}
\end{equation}
\item \textbf{Beta (B)}: 
Assume $\theta,\,\mu,\,\beta>0$ and 
\begin{equation}
\begin{gathered}
R^1\sim \text{Be}(\mu+\theta,\beta)\qquad R^2 \sim \text{Be}^{-1}(\theta,\mu)\\
(Y^1,Y^2) = (X,1-X) \qquad \text{where} \qquad X\sim \text{Be}(\mu,\beta).
\end{gathered}\label{model-B}
\end{equation}
\item \textbf{Inverse-beta (IB)}:
Assume $\mu>\theta>0,\, \beta>0$ and 
\begin{equation}
\begin{gathered}
R^1 \sim \text{Be}^{-1}(\mu-\theta,\beta)\qquad R^2 \sim \left(\text{Be}^{-1}(\theta,\beta+\mu-\theta)-1\right)\\
(Y^1,Y^2) = (X, X-1) \qquad \text{where} \qquad X\sim \text{Be}^{-1}(\mu,\beta).
\end{gathered}\label{model-IB}
\end{equation}
\end{itemize}
The name of each model refers to the distribution of the bulk weights. We call these models the \textbf{four basic beta-gamma models}.

\subsection{Results}

If $X$ is a positive random variable with density $\rho$, define
\begin{align}
L_X(x):= -\frac{1}{x\rho(x)} \Cov (\log X, \ind_{\{X\leq x\}})\label{definition L_X}
\end{align}
for all $x$ such that $\rho(x)>0$. Given a path $x_\centerdot\in \Pi_{m,n}$, define the \emph{exit points} of the path from the horizontal and vertical axes by
\begin{align}
t_1 := \max\{i:(i,0)\in x_\centerdot\} \qquad \text{and} \qquad t_2:= \max\{j:(0,j)\in x_\centerdot\}.\label{def exit points}
\end{align}

The following proposition gives exact formulas for the expectation and variance of the free energy, which is a starting point for analysis of these four models.
\begin{proposition}\label{proposition variance formula}
Assume that the polymer environment has edge weight distributions $R^1, R^2, (Y^1,Y^2)$ as in one of \eqref{model-IG} through \eqref{model-IB}.  Then for all $(m,n)\in \Z_+^2$, 
\begin{align}
\E[\log Z_{m,n}] &= m \E[\log R^1] + n \E[\log R^2], \nonumber\\
\Var[\log Z_{m,n}] &= -m \Var[\log R^1] + n \Var[\log R^2] + 2 E_{m,n} \left[\sum_{i=1}^{t_1} L_{R^1}(R^1_{i,0})\right],\label{equation var formula 1}\\
\Var[\log Z_{m,n}] &= m \Var[\log R^1] - n \Var[\log R^2] + 2 E_{m,n}\left[ \sum_{j=1}^{t_2} L_{R^2}(R^2_{0,j})\right].\label{equation var formula 2}
\end{align}
\end{proposition}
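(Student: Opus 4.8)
The plan is to follow the Cator–Groeneboom/Balázs–Cator–Seppäläinen/Seppäläinen approach, exploiting the Burke-type stationarity of these four models. First I would establish the \emph{stationarity structure}: for each of the four models, there is an explicit ``dual'' family of edge weights such that if one builds the partition functions $Z_{m,n}$ from the boundary data and the bulk weights, then the increment ratios $Z_{m,n}/Z_{m-1,n}$ and $Z_{m,n}/Z_{m,n-1}$ along any down-right path are independent, with the horizontal increments distributed as $R^1$ and the vertical increments as $R^2$. This is the lattice analogue of Burke's theorem; it is presumably proved (or cited from \cite{CN2017} and \cite{S2012}) earlier in the paper. From this, the mean formula $\E[\log Z_{m,n}] = m\,\E[\log R^1] + n\,\E[\log R^2]$ is immediate by telescoping $\log Z_{m,n}$ along, say, the staircase path through the axes and the bulk, and taking expectations.

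Next I would set up the \emph{coupling / perturbation argument} that produces the variance formula. The standard trick is to differentiate in the boundary parameter. Concretely, fix the model and think of the boundary weight distribution $R^1$ as a one-parameter family (parametrized by $\theta$, or by $\beta$, depending on which axis one perturbs); the key observation is that $\partial_\theta \log Z_{m,n}$ can be written, via the quenched polymer measure, as a sum over the boundary edges actually used by the path, i.e.\ as $E_{m,n}$ of a sum up to the exit point $t_1$. One then computes $\Var[\log Z_{m,n}]$ in two ways. Writing the staircase decomposition $\log Z_{m,n} = \sum (\text{horizontal increments}) + \sum (\text{vertical increments})$ along the axes and using that, under stationarity, the $m$ horizontal increments are i.i.d.\ $\sim \log R^1$ and independent of the vertical ones would naively give $m\,\Var[\log R^1] + n\,\Var[\log R^2]$ — but that decomposition is along the boundary, and the actual identity requires relating $\Var[\log Z_{m,n}]$ to a derivative of the mean. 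The clean route is: by stationarity, $\E[\log Z_{m,n}]$ computed along the \emph{top-right} boundary equals $m\,\E[\log R^1] + n\,\E[\log R^2]$ with the dual weights, whose parameters depend on $(m,n)$; differentiating this identity in $\theta$ and matching it against the polymer-measure expression for $\partial_\theta \log Z_{m,n}$ yields $E_{m,n}[\sum_{i=1}^{t_1} \partial_\theta \log R^1_{i,0}]$-type terms, and a second differentiation (or a variance-covariance bookkeeping) produces the $\Var$ terms together with the covariance $\Cov(\log X, \ind_{\{X\le x\}})$ that defines $L_{R^1}$. The two formulas \eqref{equation var formula 1} and \eqref{equation var formula 2} arise by perturbing the horizontal versus the vertical boundary parameter, which is why the signs of $m\,\Var[\log R^1]$ and $n\,\Var[\log R^2]$ flip between them.

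More precisely, the mechanism I expect to use is the following identity (valid by stationarity): $\Var[\log Z_{m,n}] = \Var\big[\log Z_{m,n} - \sum_{i=1}^{m}\log R^1_{i,0}\big] + \text{(cross terms)}$, where $\log Z_{m,n} - \sum \log R^1_{i,0}$ telescopes into vertical increments that are independent of the used horizontal boundary weights beyond $t_1$; carefully expanding the variance of this difference, using independence of bulk from boundary and the i.i.d.\ structure, collapses everything into $n\,\Var[\log R^2]$, $-m\,\Var[\log R^1]$, and a remainder equal to $2\,\Cov\big(\log Z_{m,n}, \sum_{i=1}^{m}\log R^1_{i,0}\big) - 2m\,\Var[\log R^1]$, which one then identifies with $2 E_{m,n}[\sum_{i=1}^{t_1} L_{R^1}(R^1_{i,0})]$ using the definition of $L_{R^1}$ and the fact that $\partial_\theta$-type manipulations on the gamma/beta densities turn $\log$-derivatives of the density into exactly the covariance $\Cov(\log X, \ind_{\{X\le x\}})/(x\rho(x))$. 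The analogous computation with the roles of the two axes swapped gives \eqref{equation var formula 2}.

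The main obstacle, I expect, is the computation identifying the remainder term with $2 E_{m,n}[\sum_{i=1}^{t_1} L_{R^1}(R^1_{i,0})]$ — i.e.\ showing that the covariance bookkeeping really does produce the specific function $L_X(x) = -\frac{1}{x\rho(x)}\Cov(\log X, \ind_{\{X\le x\}})$ rather than some model-dependent expression. This is where the Mellin-transform framework of the paper should do the work: the point of writing the boundary weights as gamma/beta/inverse-gamma/inverse-beta variables is that $\partial_\theta$ acting on $\log\rho(x)$ produces $\log x$ minus its mean (up to model-specific constants absorbed into $\E[\log R^1]$), so that the size-biased/integrated version collapses uniformly across all four models into the single formula for $L_X$. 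Verifying this uniformly — handling the fact that for model (B) the boundary weight is $\mathrm{Be}(\mu+\theta,\beta)$, for (IB) it is a shifted inverse-beta, etc.— is the delicate bookkeeping step; everything else (telescoping, independence, the sign flips) is routine given the stationarity input.
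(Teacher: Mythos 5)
Your overall approach matches the paper's: the down-right property yields the boundary decomposition $\log Z_{m,n} = S_N+S_W = S_E+S_S$ of Lemma \ref{lemma DR consequences}, and the key covariance $\Cov(S_N,S_S)$ is identified with $E_{m,n}\big[\sum_{i=1}^{t_1} L_{R^1}(R^1_{i,0})\big]$ by differentiating $\log Z_{m,n}$ in the boundary parameter (Lemma \ref{lemma covariance explicit}). You also correctly single out that identification as the delicate step. However, your ``mechanism'' paragraph contains two concrete slips. First, the remainder you write, $2\Cov\big(\log Z_{m,n}, \sum_{i=1}^m \log R^1_{i,0}\big) - 2m\Var[\log R^1]$, is off by $-2m\Var[\log R^1]$: the correct remainder is simply $2\Cov\big(\log Z_{m,n}, S_S\big)$, because the down-right property makes $S_W=\log Z_{0,n}$ independent of $S_S=\log Z_{m,0}$, so $\Cov(\log Z_{m,n},S_S)=\Cov(S_N+S_W,S_S)=\Cov(S_N,S_S)$. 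Expanding $\Var[S_E+S_S]=n\Var[\log R^2]+m\Var[\log R^1]+2\Cov(S_E,S_S)$ and substituting $\Cov(S_E,S_S)=\Cov(\log Z_{m,n},S_S)-m\Var[\log R^1]$ is what produces the sign flip on $m\Var[\log R^1]$; nothing additional gets subtracted from the covariance term. Second, your claim that the eastern (vertical) increments are ``independent of the used horizontal boundary weights'' is false: the down-right property gives independence of $S_N$ and $S_E$, and of $S_S$ and $S_W$, but $S_E$ and $S_S$ are genuinely correlated---$\Cov(S_E,S_S)$ is precisely the quantity being computed. Finally, the identification $\Cov(S_N,S_S)=E_{m,n}\big[\sum_{i=1}^{t_1} L_{R^1}(R^1_{i,0})\big]$ is not routine density bookkeeping to be absorbed by ``the Mellin framework should do the work'': it requires the quantile coupling \eqref{def coupled environment}, the quenched-derivative identity \eqref{equation deriv logZ}, and Lemma \ref{lem cov calc} (which converts $\partial_{b_1}\E^{(b_1,a_2)}[S_N]$ into $\Cov^{(b_1,a_2)}(S_N,S_S)$ together with a continuity argument to equate integrands), so the step you defer is a substantial lemma in its own right.
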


Using these exact formulas, we can obtain the following upper bound on the variance of the free energy when $(m,n)$ grow in a characteristic direction. 

\begin{theorem}\label{theorem stationary variance bounds}
Assume that the polymer environment has edge weight distributions $R^1, R^2, (Y^1,Y^2)$ as in one of \eqref{model-IG} through \eqref{model-IB}, and let $(m,n)=(m_N,n_N)_{N=1}^\infty$ be a sequence such that
\begin{align}
|m_N-N \Var[\log R^2]|\leq \gamma N^{2/3} \qquad \text{and} \qquad |n_N-N\Var[\log R^1]|\leq \gamma N^{2/3} \label{direction}
\end{align}
for some fixed $\gamma>0$.  Then there exist positive constants $c,\, C$, and $N_0$ depending only on $\mu,\theta,\beta,\gamma$ such that for all $N\geq N_0$,
\[
c N^{2/3}\leq
\Var[\log Z_{m,n}]\leq C N^{2/3}.
\]
The same constants $c,\, C,\, N_0$ can be taken for all $\mu,\theta,\beta,\gamma$ varying in a compact set.
\end{theorem}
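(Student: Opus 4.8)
The plan is to adapt the coupling-and-perturbation argument of Sepp\"al\"ainen \cite{S2012} (itself following Bal\'azs--Cator--Sepp\"al\"ainen \cite{BCS2006}), starting from the variance identities of Proposition~\ref{proposition variance formula} and feeding in the beta-gamma calculus behind the Mellin framework so that all four models are treated at once. The first remark is that $\log X$ is a nondecreasing and $\ind_{\{X\le s\}}$ a nonincreasing function of $X$, hence negatively correlated, so $\Cov(\log X,\ind_{\{X\le s\}})\le 0$ and $L_X\ge 0$; averaging \eqref{equation var formula 1} and \eqref{equation var formula 2} then cancels the $\Var[\log R^i]$ terms and yields the clean identity
\begin{equation*}
\Var[\log Z_{m,n}]=E_{m,n}\Bigl[\sum_{i=1}^{t_1}L_{R^1}(R^1_{i,0})\Bigr]+E_{m,n}\Bigl[\sum_{j=1}^{t_2}L_{R^2}(R^2_{0,j})\Bigr],
\end{equation*}
a sum of two nonnegative terms. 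So it suffices to bound each of these exit-point sums above by $CN^{2/3}$ and at least one of them below by $cN^{2/3}$.

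The analytic input I would extract from the densities \eqref{model-IG}--\eqref{model-IB}, with constants locally uniform in $(\mu,\theta,\beta)$, is: (i) $L_{R^i}$ is continuous, strictly positive on the interior of its support, and tends to $+\infty$ at each endpoint at a logarithmic rate, so $\inf L_{R^i}=:c_0>0$; (ii) $\theta\mapsto\E[\log R^1]$ and $\theta\mapsto\E[\log R^2]$ are smooth with $\partial_\theta\E[\log R^1]=\Var[\log R^1]$, $\partial_\theta\E[\log R^2]=-\Var[\log R^2]$, and with second derivatives bounded away from $0$ and $\infty$; (iii) under the quantile coupling of the boundary weights in the parameter $\theta$, the ``log-quantile derivative'' $\partial_\theta\log R^{i,\theta}$, viewed as a function of $R^{i,\theta}$, is comparable (bounded ratio) to $L_{R^i}$; one also gets the bookkeeping identity $\E[L_{R^i}(R^i)]=\Var[\log R^i]$. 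Note that the characteristic scaling \eqref{direction} is exactly the one making $m\,\partial_\theta\E[\log R^1]+n\,\partial_\theta\E[\log R^2]=m\,\Var[\log R^1]-n\,\Var[\log R^2]=O(\gamma N^{2/3})$; equivalently, it is (up to $O(\gamma N^{2/3})$) the critical point of $\theta\mapsto\E[\log Z^\theta_{m,n}]=m\,\E[\log R^1_\theta]+n\,\E[\log R^2_\theta]$. Proving (i)--(iii) uniformly in the four models is one of the two real tasks and is precisely where the Mellin apparatus of the earlier sections is used.

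For the upper bound I would keep the bulk weights fixed and introduce the stationary environments with boundary parameter $\theta\pm\delta$, $\delta=\kappa N^{-1/3}$, coupled to the $\theta$-environment through monotone quantile couplings of the boundary weights; write $Z^{\theta'}$ for the corresponding partition functions. Case-by-case inspection of \eqref{model-IG}--\eqref{model-IB} shows that raising $\theta$ stochastically raises the horizontal boundary weights and lowers the vertical ones, and since every path from the origin uses boundary edges of exactly one axis, one obtains the ratio representation $Z^{\theta'}_{m,n}/Z^{\theta}_{m,n}=E^{Q^{\theta}_{m,n}}\!\bigl[e^{D}\bigr]$, where $D=\sum_{i=1}^{t_1}\log\bigl(R^{1,\theta'}_{i,0}/R^{1,\theta}_{i,0}\bigr)+\sum_{j=1}^{t_2}\log\bigl(R^{2,\theta'}_{0,j}/R^{2,\theta}_{0,j}\bigr)$: the two blocks carry opposite (known) signs and, by (iii), the first is of size $\delta\sum_{i=1}^{t_1}L_{R^1}(R^1_{i,0})$ and the second of size $\delta\sum_{j=1}^{t_2}L_{R^2}(R^2_{0,j})$, up to bounded ratios. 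The coupling argument of \cite{S2012} --- a careful use of Jensen's inequality together with a separation of the horizontal and vertical contributions --- converts this into a bound of the schematic form $c\,\delta\,E_{m,n}\bigl[\sum_{i=1}^{t_1}L_{R^1}(R^1_{i,0})\bigr]\le\bigl(\E[\log Z^{\theta}_{m,n}]-\E[\log Z^{\theta-\delta}_{m,n}]\bigr)+(\text{controlled error})$. Taylor-expanding the exact mean formula via (ii), the first-order term is $\delta\bigl(m\,\Var[\log R^1]-n\,\Var[\log R^2]\bigr)=O(\delta\gamma N^{2/3})$ --- the quantity \eqref{direction} makes negligible --- and the second-order term is $\Theta(\delta^2N)$, so the right side is $O(\delta\gamma N^{2/3}+\delta^2N)$; dividing by $\delta=\kappa N^{-1/3}$ gives $E_{m,n}[\sum_{i=1}^{t_1}L_{R^1}(R^1_{i,0})]\le CN^{2/3}$, and the mirror-image argument bounds the $t_2$-sum, so $\Var[\log Z_{m,n}]\le CN^{2/3}$.

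For the lower bound I would argue by contradiction in the same spirit: if $\Var[\log Z_{m,n}]\le\epsilon N^{2/3}$ then both exit-point sums are $\le\epsilon N^{2/3}$, so (using (iii) and the continuity in $\theta$ of the relevant quantities) the coupling makes $\delta\mapsto\E[\log Z^{\theta+\delta}_{m,n}]$ affine in $\delta$ up to an error of order $\epsilon\delta N^{2/3}$ plus a quenched-fluctuation error, which for $\delta\asymp N^{-1/3}$ is $o(\delta^2N)$ once $\epsilon$ is small --- contradicting the strict convexity ($\Theta(\delta^2N)$ second derivative) of $\delta\mapsto\E[\log Z^{\theta+\delta}_{m,n}]$ forced by the exact mean formula and (ii). Hence $\Var[\log Z_{m,n}]\ge cN^{2/3}$. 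Finally, each constant produced above is a continuous function of $(\mu,\theta,\beta,\gamma)$ on the region where the model is defined, so compactness gives uniform $c,C,N_0$. The step I expect to be the main obstacle is the coupling: converting the pointwise ratio representation into the clean expectation bounds --- the separation of the two axes' contributions, the replacement of the raw increments of $D$ by comparable $L_{R^i}$-sums via (iii), the control of the quenched fluctuations of $D$ (equivalently, of the exit points of the \emph{perturbed} polymer) --- and carrying all of this out with constants that remain stable as $(\mu,\theta,\beta)$ range over a compact set.
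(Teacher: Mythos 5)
Your starting point is correct and matches the paper: averaging \eqref{equation var formula 1} and \eqref{equation var formula 2} does give $\Var[\log Z_{m,n}]=E_{m,n}[\sum_{i=1}^{t_1}L_{R^1}]+E_{m,n}[\sum_{j=1}^{t_2}L_{R^2}]$, and nonnegativity of $L$ follows as you say. After that, though, both halves of your argument contain genuine gaps.

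First, your analytic input (i) is false: $L_{R^i}$ does \emph{not} have a positive infimum for the beta and inverse-beta models. From \eqref{eq Lf integral form}, for $f(x)=(1-x)^{b-1}\ind_{\{0<x<1\}}$ one has $L^f(a,x)\le C(a)(1-x)\to 0$ as $x\to 1^-$ (this is exactly \eqref{equation L^f beta bound 2} in Appendix A). So $L_{R^i}$ blows up only at \emph{one} endpoint and vanishes at the other, and $\inf L_{R^i}=0$. This is precisely why the paper needs a separate moment argument (Lemma \ref{E[t] lemma}) to compare $E_{m,n}[\sum L]$ with $E_{m,n}[t_j]$ rather than the pointwise inequality $L\ge c_0>0$.

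Second, the upper bound ``schematic'' does not deliver what you need. Under the stationary perturbation, the increment $D=\sum_{i\le t_1}\log(R^{1,\theta'}/R^{1,\theta})+\sum_{j\le t_2}\log(R^{2,\theta'}/R^{2,\theta})$ has its two blocks carrying \emph{opposite} signs (as you note), and the Jensen-plus-derivative machinery you sketch controls only the antisymmetric combination. Concretely, from \eqref{equation deriv logZ} the stationary-direction derivative is $\tfrac{\p}{\p\lambda}\E[\log Z^{(a_1+\lambda,a_2-\lambda)}]=E^{(\cdot)}[\sum L^{f^1}]-E^{(\cdot)}[\sum L^{f^2}]$, which at $\lambda=0$ equals $m\Var[\log R^1]-n\Var[\log R^2]$ — already known to be $O(\gamma N^{2/3})$ by the choice of direction \eqref{direction}, independent of any bound you prove. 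The variance is the \emph{sum} $E[\sum L^{f^1}]+E[\sum L^{f^2}]$, and the symmetric combination simply does not appear in the derivative of $\E[\log Z]$ in the stationary direction. This is why the paper has to go through quenched tail bounds: Proposition \ref{proposition stationary ub} bounds $\P\{Q_{m,n}(t_j\ge u)\ge e^{-\delta u^2/N}\}$ (with a self-referential $E[t_j]$ on the right that is then bootstrapped in Corollary \ref{corollary UB on E[t]}), and only then is the $L$-sum controlled through Lemma \ref{E[t] lemma}. Your sketch omits all three of these components; they are the actual content, not ``controlled error.''

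Third, the lower bound by contradiction does not work as stated. The quantity $\delta\mapsto\E[\log Z^{(a_1+\delta,a_2-\delta)}_{m,n}]=m\psi_0^{f^1}(a_1+\delta)+n\psi_0^{f^2}(a_2-\delta)$ is an \emph{exact identity} (Proposition \ref{proposition variance formula}(a)), so it is strictly convex with second derivative $m\psi_2^{f^1}+n\psi_2^{f^2}=\Theta(N)$ regardless of what $\Var[\log Z_{m,n}]$ is — no hypothesis on the variance can make it ``affine up to a small error,'' and there is no contradiction to extract. (Controlling the $L$-sums at $\delta=0$ also tells you nothing directly about $\delta\neq 0$; bounding $E^{(a_1+\delta,a_2)}[\sum L^{f^1}]$ uniformly in $\delta$ of size $N^{-1/3}$ requires a comparison like Lemma \ref{lemma variance comparison} and the tail bounds again, and even then the orders only match rather than clash.) The paper's actual lower bound (Proposition \ref{proposition lower bound}) is constructive: perturb to $(\ta_1,\ta_2)=(a_1+bN^{-1/3},a_2-bN^{-1/3})$, use the sub-box Lemma \ref{lemma sub-box} and the already-proved quenched tails (Corollary \ref{corollary quenched tails}) to show the $\t{a}$-polymer exits near $\Theta(bN^{2/3})$ with high probability, observe that the $\t{a}$-mean exceeds the $a$-mean by $c_4 b^2 N^{1/3}$ (Lemma \ref{lemma -psi1 psi2 combo is positive}), and transfer via a Radon--Nikodym change of measure whose second moment is $e^{O(b^3)}$, yielding $\P(\overline{\log Z_{m,n}}\ge c_0 N^{1/3})\ge\epsilon_0$. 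A case split on the signs of $\psi_2^{f^j}$ is needed here (Figures \ref{figure case 1}, \ref{figure case 2}); your sketch does not address this. In short, your overall scaffolding is the right one and your observation (iii) is actually an exact identity ($\p_a\log H^f=L^f$, \eqref{equation partial H}), but the hard parts — the quenched tail estimates, the bootstrapping to $E[t_j]\le CN^{2/3}$, the moment bound in Lemma \ref{E[t] lemma}, and the change-of-measure lower bound — are missing rather than deferred.
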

Theorem \ref{theorem stationary variance bounds} and a Borel-Cantelli argument give the following law of large numbers.
\begin{corollary}
With assumptions as in Theorem \ref{theorem stationary variance bounds} the following limit holds $\P$-almost surely
\begin{align*}
\lim_{N\to \infty}\frac{\log Z_{m,n}}{N} = \E[\log R^1] \Var[\log R^2] + \E[\log R^2] \Var[\log R^1].
\end{align*}
\end{corollary}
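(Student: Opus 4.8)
The plan is to extract the almost-sure law of large numbers from the second-moment control in Theorem~\ref{theorem stationary variance bounds} via a standard subsequence-plus-monotonicity argument. First I would set $v_1 := \Var[\log R^1]$ and $v_2 := \Var[\log R^2]$, and note that by the first display of Proposition~\ref{proposition variance formula} the mean is exactly $\E[\log Z_{m_N,n_N}] = m_N \E[\log R^1] + n_N \E[\log R^2]$. Under the characteristic-direction hypothesis \eqref{direction} we have $m_N = N v_2 + O(N^{2/3})$ and $n_N = N v_1 + O(N^{2/3})$, so $\E[\log Z_{m_N,n_N}]/N \to \E[\log R^1]\, v_2 + \E[\log R^2]\, v_1$, which is exactly the claimed limit. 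Thus it suffices to show $\big(\log Z_{m_N,n_N} - \E[\log Z_{m_N,n_N}]\big)/N \to 0$ almost surely.

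Next I would apply Chebyshev's inequality together with the bound $\Var[\log Z_{m_N,n_N}] \le C N^{2/3}$ from Theorem~\ref{theorem stationary variance bounds}: for each $\ep>0$,
\[
\P\!\left( \bigl| \log Z_{m_N,n_N} - \E[\log Z_{m_N,n_N}] \bigr| > \ep N \right) \le \frac{C N^{2/3}}{\ep^2 N^2} = \frac{C}{\ep^2}\, N^{-4/3}.
\]
Since $\sum_N N^{-4/3} < \infty$, the Borel--Cantelli lemma gives that almost surely $\bigl|\log Z_{m_N,n_N} - \E[\log Z_{m_N,n_N}]\bigr| \le \ep N$ for all large $N$. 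Intersecting over a countable sequence $\ep = 1/k$ yields $\big(\log Z_{m_N,n_N} - \E[\log Z_{m_N,n_N}]\big)/N \to 0$ almost surely, and combined with the mean computation above this proves the stated limit.

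One technical point worth addressing: the hypothesis \eqref{direction} only pins down $(m_N, n_N)$ up to $O(N^{2/3})$, so a priori the statement is about an arbitrary but fixed such sequence, and the above argument handles exactly that. If instead one wanted the limit to hold simultaneously along \emph{all} admissible directions (i.e.\ uniformly over all $(m,n)$ satisfying \eqref{direction} for the given $\gamma$) one would also invoke monotonicity of $\log Z_{m,n}$ in $m$ and $n$ to sandwich a general $(m,n)$ between two nearby lattice points on a coarse grid of spacing $\asymp N^{2/3}$, control the $O(N^{2/3})$ discrepancy between their means (which is $o(N)$), and union-bound Borel--Cantelli over the polynomially-many grid points in the relevant window; but for the plain statement as written this refinement is not needed.

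The main obstacle is essentially nonexistent here beyond bookkeeping: all the real work is in Theorem~\ref{theorem stationary variance bounds}, and the corollary is the routine $O(N^{2/3})$-variance $\Rightarrow$ summable-Chebyshev $\Rightarrow$ Borel--Cantelli deduction, together with reading off the first-order constant from the exact mean formula in Proposition~\ref{proposition variance formula}. The only place one should be slightly careful is to make sure the normalization is by $N$ (not by $m_N$ or by $m_N + n_N$), so that the $N^{2/3}$ variance indeed divides out to give $N^{-4/3}$, which is summable; a normalization by $\sqrt{m_N n_N}$ or similar would not change the conclusion but would obscure the clean constant $\E[\log R^1]\,\Var[\log R^2] + \E[\log R^2]\,\Var[\log R^1]$.
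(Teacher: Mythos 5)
Your proof is correct and matches the paper's own (one-line) indication exactly: the paper states only that ``Theorem \ref{theorem stationary variance bounds} and a Borel--Cantelli argument give the following law of large numbers,'' and your argument is precisely that deduction --- Chebyshev with the $O(N^{2/3})$ variance bound to get a summable tail $N^{-4/3}$, Borel--Cantelli, and the exact mean formula from Proposition \ref{proposition variance formula} to identify the limiting constant. Your remark about fixed-sequence versus uniform-over-directions correctly identifies that the statement as written is about a fixed sequence and needs no further refinement.
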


The following is a result for when the sequence $(m_N,n_N)$ does not satisfy condition \eqref{direction}. The statement is given for when the horizontal direction is too large, but an analogous result holds for the vertical direction.
\begin{corollary}\label{corollary CLT}
Assume that the polymer environment has edge weight distributions $R^1, R^2, (Y^1,Y^2)$ as in one of \eqref{model-IG} through \eqref{model-IB} and that $m,n\to\infty$. Define $N$ by $n= N\Var[\log R^1]$ and assume
\[ N^{-\alpha}(m - N\Var[\log R^2]) \to c_1 >0 \]
for some $\alpha>2/3$. Then as $N\to \infty$
	\[N^{-\alpha/2} \left( \log Z_{m,n} - \E[\log Z_{m,n}] \right)\]
converges in distribution to a centered normal with variance $c_1 \Var[\log R^1]$. 
\end{corollary}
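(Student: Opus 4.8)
The plan is to split $\log Z_{m,n}$ into a contribution that grows in a characteristic direction --- hence with fluctuations of order $N^{1/3}$ by Theorem~\ref{theorem stationary variance bounds}, negligible on the scale $N^{\alpha/2}$ since $\alpha>2/3$ --- plus a sum of i.i.d.\ boundary increments accounting for the excess horizontal length $m-N\Var[\log R^2]\sim c_1 N^\alpha$, to which the ordinary central limit theorem applies. The role of the hypothesis $\alpha>2/3$ is precisely to make the characteristic-direction fluctuations asymptotically irrelevant.

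Concretely, set $m':=\floor{N\Var[\log R^2]}$ and $r:=m-m'$, so that $r\to\infty$ and $N^{-\alpha}r\to c_1$. Writing $U_{i,n}:=Z_{i,n}/Z_{i-1,n}$ for the horizontal ratios along the line $\{y=n\}$, telescoping gives the exact identity $\log Z_{m,n}=\log Z_{m',n}+\sum_{i=m'+1}^{m}\log U_{i,n}$, hence, subtracting means,
\[
\overline{\log Z_{m,n}}=\overline{\log Z_{m',n}}+\sum_{i=m'+1}^{m}\overline{\log U_{i,n}}.
\]
Here I would invoke the Burke-type stationarity property of these models (the same property underlying Proposition~\ref{proposition variance formula}): the ratios $U_{m'+1,n},\dots,U_{m,n}$, being the horizontal-edge weights along a down-right path, are i.i.d.\ with common distribution $R^1$; for each of the four models $\log R^1$ has finite, strictly positive variance, and $\E[\log U_{i,n}]=\E[\log R^1]$. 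Since $r\to\infty$, the classical CLT gives that $r^{-1/2}\sum_{i=m'+1}^{m}\overline{\log U_{i,n}}$ converges in distribution to a centered normal with variance $\Var[\log R^1]$; multiplying by $(N^{-\alpha}r)^{1/2}\to c_1^{1/2}$ and applying Slutsky's theorem, $N^{-\alpha/2}\sum_{i=m'+1}^{m}\overline{\log U_{i,n}}$ converges in distribution to a centered normal with variance $c_1\Var[\log R^1]$.

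It remains to control the first term. Since $|m'-N\Var[\log R^2]|\le 1$ and $n=N\Var[\log R^1]$, the pair $(m',n)$ satisfies condition~\eqref{direction} --- after replacing $N$ by $\ceil{N}$ to obtain an integer index and taking $\gamma$ to be a fixed constant absorbing the resulting $O(1)$ discrepancies --- so Theorem~\ref{theorem stationary variance bounds} gives $\Var[\log Z_{m',n}]\le CN^{2/3}$ for all large $N$. Therefore $\Var\!\bigl(N^{-\alpha/2}\overline{\log Z_{m',n}}\bigr)=N^{-\alpha}\Var[\log Z_{m',n}]\le CN^{2/3-\alpha}\to0$, and since $\alpha>2/3$ this forces $N^{-\alpha/2}\overline{\log Z_{m',n}}\to0$ in $L^2$, hence in probability. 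A final application of Slutsky's theorem to the displayed decomposition then yields the asserted convergence of $N^{-\alpha/2}\overline{\log Z_{m,n}}$ to a centered normal with variance $c_1\Var[\log R^1]$.

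No step is really an obstacle: everything rests on the Burke property in the form ``the horizontal ratios along a fixed horizontal line form an i.i.d.\ sequence distributed as $R^1$,'' which is already in hand, together with the variance bound of Theorem~\ref{theorem stationary variance bounds}; note that independence of the two pieces of the decomposition is not needed, only Slutsky. The one mild nuisance is that the corollary's parameter $N=n/\Var[\log R^1]$ need not be an integer, but since rounding perturbs $(m',n)$ by $O(1)$ this is harmless in \eqref{direction} and is absorbed into the constants.
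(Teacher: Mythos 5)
Your proof is correct and follows essentially the same route as the paper's: decompose $\log Z_{m,n}$ as $\log Z_{m',n}$ plus a telescoping sum of horizontal ratios (which the paper writes directly as $R^1_{i,n}$), control the first term via Theorem~\ref{theorem stationary variance bounds} and Chebyshev, identify the sum's terms as i.i.d.\ copies of $\log R^1$ by the down-right property, and finish with the classical CLT. Your version just spells out the Slutsky and integer-rounding details more explicitly than the paper does.
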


The variance formulas in Proposition \ref{proposition variance formula} connect the variance of the free energy to the exit points of the path from the boundaries \eqref{def exit points}. This allows us to obtain bounds on the polymer path fluctuations under the annealed measure.

Given a path $x_\centerdot\in \Pi_{m,n}$, for $0\leq k\leq m$ and $0\leq l\leq n$ define
\begin{align*}
v_0(l) &:= \min\{i: (i,l)\in x_\centerdot\} & v_1(l) &:= \max\{i:(i,l) \in x_\centerdot\}\\
w_0(k) &:= \min\{j: (k,j) \in x_\centerdot\} & w_1(k) &:= \max\{j:(k,j)\in x_\centerdot\}.
\end{align*}
This is illustrated in Figure \ref{figure vw}.
\begin{figure}[H]
\centering
\begin{tikzpicture}[scale=1.15]
\draw (0,0) -- (6,0) -- (6,5) -- (0,5) -- (0,0);
\draw [ultra thick] (0,2) -- (6,2);
\draw [ultra thick] (4,0) -- (4,5);

\node [below] at (4,0) {$k$};
\node[left] at (0,2) {$l$};

\draw [line width=2.5pt] (0,0) -- (.5,0) -- (.5,.75) -- (.75,.75) -- (.75,1) -- (1.5,1) -- (1.5,1.25) -- (1.75,1.25) -- (1.75,1.75) -- (2,1.75) -- (2,2) -- (2.75,2) -- (2.75,2.5) -- (3.25,2.5) -- (3.25, 3.25) -- (3.75, 3.25) -- (3.75,3.5) -- (4,3.5) -- (4,4.25) -- (4.75,4.25) -- (4.75,4.75) -- (5,4.75) -- (5,5) -- (6,5);

\draw [dashed] (2,0) -- (2,2);
\draw [dashed] (2.75,0) -- (2.75,2);
\draw [dashed] (0,3.5) -- (4,3.5);
\draw [dashed] (0,4.25) -- (4,4.25);

\node [below] at (2.75,0) {$v_1(l)$};
\node [below] at (2,0) {$v_0(l)$};

\node [left] at (0,3.5) {$w_0(k)$};
\node [left] at (0,4.25) {$w_1(k)$};

\end{tikzpicture}
\caption{Example path with $v_0,v_1, w_0,w_1$ illustrated.}
\label{figure vw}
\end{figure}
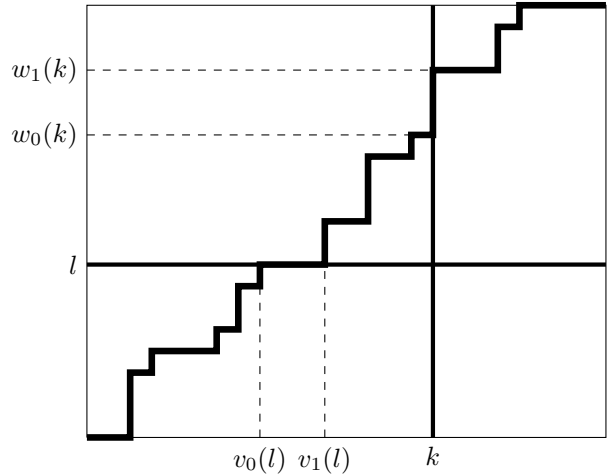
\begin{theorem}\label{theorem stationary PF UB}
Assume that the polymer environment has edge weight distributions $R^1, R^2, (Y^1,Y^2)$ as in one of \eqref{model-IG} through \eqref{model-IB}, and let $(m,n)=(m_N,n_N)_{N=1}^\infty$ be a sequence satisfying \eqref{direction} for some fixed $\gamma>0$. Let $0\leq \tau <1$. Then there exist positive constants $b_0,\, C,\, c_0,\,c_1,\,N_0$ depending only on $\mu,\,\theta,\,\beta,\,\gamma,\,\tau$ such that for $b\geq b_0$ and $N\in \N$,
\begin{align}
P_{m,n}( v_0(\floor{\tau n}) \leq \tau m - bN^{2/3} \text{ or } v_1(\floor{\tau n}) \geq \tau m + bN^{2/3}) \leq \frac{C}{b^3},\label{pf-bound-v}\\
P_{m,n}( w_0(\floor{\tau m}) \leq \tau n - bN^{2/3} \text{ or } w_1(\floor{\tau m}) \geq \tau n + bN^{2/3}) \leq \frac{C}{b^3},\label{pf-bound-h}
\end{align}
and for all $N\geq N_0$,
\begin{equation}\label{pf-lower bound}
c_0\leq P_{m,n}(v_1(\floor{\tau n})\geq \tau m + c_1 N^{2/3} \text{ or } w_1(\floor{\tau m})\geq\tau n +c_1 N^{2/3}).
\end{equation}
The same constants  can be taken for all $\mu,\,\theta,\,\beta,\,\gamma,\,\tau$ varying in a compact set.
\end{theorem}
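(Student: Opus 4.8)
The plan is to follow the approach of Sepp\"al\"ainen \cite{S2012}, using Proposition \ref{proposition variance formula} as the link between the free energy variance and the path geometry. The organizing idea is a dichotomy: whenever the path is forced to be at distance of order $bN^{2/3}$ from the characteristic point $(\tau m,\tau n)$, the exit point from a coordinate axis of some stationary (sub-)polymer is forced to be of order $bN^{2/3}$, and large exit points are rare. So the first step is to establish the tail bounds
$$P_{m,n}(t_1\geq bN^{2/3})\leq Cb^{-3}\quad\text{and}\quad P_{m,n}(t_2\geq bN^{2/3})\leq Cb^{-3}$$
for $b\geq b_0$ and all $N$, with $t_1,t_2$ as in \eqref{def exit points}. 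Adding \eqref{equation var formula 1} and \eqref{equation var formula 2} the boundary variance terms cancel, giving the identity $\Var[\log Z_{m,n}]=E_{m,n}\bigl[\sum_{i=1}^{t_1}L_{R^1}(R^1_{i,0})\bigr]+E_{m,n}\bigl[\sum_{j=1}^{t_2}L_{R^2}(R^2_{0,j})\bigr]$, and since $\log X$ is nondecreasing and $\ind_{\{X\leq x\}}$ is nonincreasing we have $\Cov(\log X,\ind_{\{X\leq x\}})\leq 0$, so $L_X\geq 0$ and both summands are nonnegative. Theorem \ref{theorem stationary variance bounds} bounds these annealed expectations by $O(N^{2/3})$ in the characteristic direction \eqref{direction}. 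Upgrading this first-moment control to the cubic tail is carried out by the boundary-perturbation/coupling argument of \cite{BCS2006, S2012}: comparing the stationary polymer with one whose boundary parameters are shifted, the free energy gap on $\{t_1\geq u\}$ dominates a sum of $u$ boundary contributions, and bounding the moments of this gap produces the $u^{-3}$ decay. These estimates are in fact precisely those underlying Theorem \ref{theorem stationary variance bounds}.

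For the upper bounds \eqref{pf-bound-v} and \eqref{pf-bound-h}, I would reduce, as in \cite{S2012}, to the exit-point estimates of the first step. Fix $l=\floor{\tau n}$ and decompose $Q_{m,n}$ according to the rightmost site $(v,l)$ visited by the path at height $l$: conditionally on $v_1(l)=v$ the portion of the path below the line $y=l$ is distributed as the stationary polymer from $(0,0)$ to $(v,l)$, and the portion above is a bulk polymer. If $v\geq\tau m+bN^{2/3}$ then the endpoint $(v,l)$ is off the characteristic direction of the sub-box (too horizontal) by order $bN^{2/3}$, which via the boundary-perturbation argument forces the sub-polymer's exit point from the horizontal axis to be of order $bN^{2/3}$; the tail bound of the first step then makes this contribution at most $Cb^{-3}$, and summing the geometrically decaying contributions over $v$ gives \eqref{pf-bound-v}. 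The event $\{v_0(l)\leq\tau m-bN^{2/3}\}$ is symmetric, using instead the exit point from the vertical axis, and \eqref{pf-bound-h} is the same argument with the two coordinates exchanged. Monotonicity of $v_0,v_1$ in $l$ and of $w_0,w_1$ in $k$, which is immediate from the up-right property of paths, reduces everything to the single lines $y=\floor{\tau n}$ and $x=\floor{\tau m}$.

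For the lower bound \eqref{pf-lower bound}, the lower bound of Theorem \ref{theorem stationary variance bounds} together with the identity above gives $E_{m,n}\bigl[\sum_{i=1}^{t_1}L_{R^1}(R^1_{i,0})\bigr]+E_{m,n}\bigl[\sum_{j=1}^{t_2}L_{R^2}(R^2_{0,j})\bigr]\geq cN^{2/3}$, so one of these expectations, say the first, is at least $cN^{2/3}/2$; bounding the relevant conditional expectations of $L_{R^1}$ along the path from below (using the explicit densities) upgrades this to $E_{m,n}[t_1]\geq c'N^{2/3}$. Combined with the cubic upper tail, the identity $E_{m,n}[t_1]=\int_0^\infty P_{m,n}(t_1>s)\,ds$ forces $P_{m,n}(t_1\geq c_1N^{2/3})\geq c_0$ for suitable constants. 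Finally, conditioning on $\{t_1\geq c_1N^{2/3}\}$ the path from $(t_1,1)$ onward is a bulk polymer started a distance $c_1N^{2/3}$ to the right of the characteristic line, and using \eqref{pf-bound-v} to control the transversal position of that bulk polymer at height $\floor{\tau n}$ forces $v_1(\floor{\tau n})\geq\tau m+c_1'N^{2/3}$ on an event of probability bounded below; the symmetric use of $t_2$ supplies the $w_1$ alternative, and together these give \eqref{pf-lower bound}.

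The main obstacle is the passage in the first step from the $O(N^{2/3})$ bound on $E_{m,n}\bigl[\sum_{i=1}^{t_1}L_{R^1}(R^1_{i,0})\bigr]$ to the cubic tail bound on $t_1$: since $L_X(x)$ blows up as $x\to0$ and vanishes as $x\to\infty$ (as one checks directly from the gamma and beta densities), $\sum_{i=1}^{t_1}L_{R^1}(R^1_{i,0})$ cannot be compared with $t_1$ by a single constant, so the coupling must be run while tracking the annealed law of the boundary weights the path actually uses, and the off-characteristic exit-point lower bounds needed in the second and third steps require the analogous care. All of this — the sign and monotonicity inputs, the moment estimates, and the coupling — must moreover be carried out uniformly over the four beta-gamma families and over compact ranges of $\mu,\theta,\beta,\gamma,\tau$, which is exactly the uniformity that the Mellin-transform formulation is built to supply; a secondary point is that the bulk polymers produced by the conditioning in the second and third steps are non-stationary, so the transversal bounds used for them must themselves be bootstrapped from the stationary estimates.
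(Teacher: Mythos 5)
The proposal correctly identifies the exit-point tail bound $P_{m,n}(t_j\geq bN^{2/3})\leq Cb^{-3}$ (the paper's Lemma \ref{lemma b cubed}) as the engine, and correctly reads off the two variance identities and the nonnegativity of $L_{R^j}$. But it misses the single structural observation that makes the reduction clean, and the alternative route it sketches has real gaps.

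The paper's proof hinges on Lemma \ref{lemma sub-box}: by the down-right property the partition function factorizes as $Z_{m,n}^{(k,l)}=Z_{m,n}/Z_{k,l}$, whence the \emph{exact identity} $Q_{m,n}(v_1(l)\geq k+u)=Q_{m,n}^{(k,l)}(t_1^{(k,l)}\geq u)$, and the sub-polymer from $(k,l)$ to $(m,n)$ is \emph{again stationary} with the same marginals. Taking $(k,l)=(\floor{\tau m},\floor{\tau n})$ puts the sub-polymer in the characteristic direction with scale $M=(1-\tau)N$, so Lemma \ref{lemma b cubed} applies verbatim; the $v_0$ and $w_0$ bounds follow by choosing $(k,l)$ shifted by $O(bN^{2/3})$ and using $w_1(k)\geq\floor{\tau n}\Rightarrow v_0(\floor{\tau n})\leq k$. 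Your proposal instead decomposes $Q_{m,n}$ by conditioning on $v_1(l)=v$ and views the lower piece as a stationary polymer to the \emph{off-characteristic} endpoint $(v,l)$. That requires an exit-point estimate in the off-characteristic direction, which neither Lemma \ref{lemma b cubed} nor Proposition \ref{proposition stationary ub} provides (they are stated only for sequences satisfying \eqref{direction}); the claimed ``geometric decay over $v$'' is not justified and is not what the paper does. The Lemma \ref{lemma sub-box} identity neatly sidesteps this by always landing on a characteristic-direction sub-polymer.

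The lower bound \eqref{pf-lower bound} has the same issue. For $\tau>0$ the paper again invokes Lemma \ref{lemma sub-box} to rewrite the event $\{v_1(\floor{\tau n})\geq \tau m+c_1N^{2/3} \text{ or } w_1(\floor{\tau m})\geq \tau n + c_1N^{2/3}\}$ directly in terms of $\{t_1>u\text{ or }t_2>u\}$ for the sub-polymer from $(\floor{\tau m},\floor{\tau n})$, and then applies Corollary \ref{corollary pf lower bound}. Your sketch of propagating $t_1\geq c_1N^{2/3}$ to height $\floor{\tau n}$ by following the bulk polymer after the exit is both circular (it invokes \eqref{pf-bound-v}, which is for the stationary polymer, to control a non-stationary bulk piece) and not one-sided in the right direction: transversal-fluctuation upper bounds do not force the path to \emph{stay} shifted right. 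Also, in deriving the exit-point lower bound you write ``one of these expectations, say the first, is at least $cN^{2/3}/2$,'' but which one is large could change with $N$; the paper avoids this by bounding $E_{m,n}[t_1+t_2]$ below and then using Cauchy--Schwarz together with Lemma \ref{lemma b cubed} to conclude $P_{m,n}(t_1+t_2>2c_1N^{2/3})\geq c_0$ for the \emph{sum}, which is exactly the union event needed. In short: supply the sub-box identity (Lemma \ref{lemma sub-box}) and both the upper and lower bounds collapse to the exit-point results already proved; without it, the proposal's decomposition requires off-characteristic estimates that are not available.
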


\emph{Structure of the paper:} In Section \ref{section DRP} we define the down-right property then state and prove consequences of this property. In Section \ref{section Mellin} we introduce the Mellin transform framework, which allows us to treat the four basic beta-gamma models simultaneously, and prove Proposition \ref{proposition variance formula}. In Section \ref{section variance UB} we prove the upper bound of Theorem \ref{theorem stationary variance bounds}. In Section \ref{section PF UB} we prove bounds \eqref{pf-bound-v} and \eqref{pf-bound-h} of Theorem \ref{theorem stationary PF UB}. In Section \ref{section lower bounds} we prove the lower bound of Theorem \ref{theorem stationary variance bounds} and bound \eqref{pf-lower bound} of Theorem \ref{theorem stationary PF UB}. In Appendix \ref{appendix CSS} we verify that each of the four basic beta-gamma models satisfies the conditions of Hypothesis \ref{hypothesis CSS}. Appendix \ref{appendix misc} collects technical lemmas used in Sections \ref{section Mellin} and \ref{section variance UB}. Appendix \ref{appendix psi} collects facts used in the proof of Proposition \ref{proposition lower bound}.\\

\emph{Acknowledgements:} This work is part of our dissertation research at the University of Wisconsin-Madison.  We thank our advisors Timo Sepp{\"a}l{\"a}inen and Benedek Valk\'o for their guidance and insights.

\section{The down-right property}\label{section DRP}
Write $\alpha_1=(1,0)$, $\alpha_2=(0,1)$. For $k=1,2$ define ratios of partition functions
\begin{align*}
R^k_x:=\frac{Z_x}{Z_{x-\alpha_k}}\qquad \text{for all  }x\text{ such that }  x-\alpha_k\in\mathbb{Z}_+^2.
\end{align*}
Note that these extend the definitions of $R^1_{i,0}$ and $R^2_{0,j}$, since for example $Z_{i,0}=\prod_{k=1}^{i} R^1_{k,0}$.
We say that $\pi=\{\pi_k\}_{k\in\mathbb{Z}}$ is a down-right path in $\Z_+^2$ if $\pi_k\in\mathbb{Z}_+^2$ and $\pi_{k+1}-\pi_k\in\{\alpha_1, -\alpha_2\}$ for each $k\in \Z$. To each edge along a down-right path we associate the random variable
\[
\Lambda_{\{\pi_{k-1},\pi_k\}} :=
\begin{cases}
R^1_{\pi_k} & \text{ if } \{\pi_{k-1},\pi_k\} \text{ is horizontal,}\\
R^2_{\pi_{k-1}} & \text{ if } \{\pi_{k-1},\pi_k\} \text{ is vertical}.
\end{cases}
\]
The following definition is a weaker form of the Burke property (see Theorem 3.3 of \cite{S2012}). 
\begin{definition}
Say the polymer model has the {\rm down-right property} if for all down-right paths $\pi=\{\pi_k\}_{k\in\mathbb{Z}}$, the random variables 
\begin{equation*}
\Lambda(\pi) := \{\Lambda_{\{\pi_{k-1},\pi_k\}}: k\in\mathbb{Z}\}
\end{equation*}
are independent and each $R^1_{\pi_k}$ and $R^2_{\pi_k}$ appearing in the collection are respectively distributed as $R^1$ and $R^2$.
\end{definition}
The partition functions satisfy the recurrence relation
\begin{equation}
Z_x=Y^1_xZ_{x-\alpha_1}+Y^2_x Z_{x-\alpha_2} \qquad \text{for } x\in\mathbb{N}^2.\label{equation Z recursion}
\end{equation}
This recurrence relation then implies the recursions
\begin{align}
\begin{split}
R^1_x &= Y^1_x+Y^2_x\frac{R^1_{x-\alpha_2}}{R^2_{x-\alpha_1}}\\
R^2_x &= Y^1_x\frac{R^2_{x-\alpha_1}}{R^1_{x-\alpha_2}}+Y^2_x
\end{split}
\qquad \text{for } x\in\mathbb{N}^2.
\label{equation ratio recursions}
\end{align}
Using the recursions \eqref{equation ratio recursions} we can reduce the down-right property to a simple preservation in distribution.
\begin{lemma}\label{lemma down-right equiv}
Let $R^1,R^2,(Y^1,Y^2)$ be positive random variables such that $R^1, R^2$ and the pair $(Y^1,Y^2)$ are independent.  Put 
\begin{equation*}
\big(\t{R}^1,\t{R}^2\big):=(Y^1+Y^2 R^1/R^2,\, Y^1 R^1/R^2+Y^2).
\end{equation*}
Then the polymer model with edge weights $R^1, R^2, (Y^1,Y^2)$ has the down-right property if and only if $\big(\t{R}^1,\t{R}^2\big)\overset{d}{=}\big(R^1,R^2\big)$.
\end{lemma}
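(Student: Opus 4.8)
The plan is to read both implications off the ratio recursion \eqref{equation ratio recursions}, which says precisely that the pair of incoming partition-function ratios $(R^1_x,R^2_x)$ at a bulk vertex $x$ is the image of $(R^1_{x-\alpha_2},R^2_{x-\alpha_1})$ and the bulk weight $(Y^1_x,Y^2_x)$ under the map $(R^1,R^2)\mapsto(\tilde R^1,\tilde R^2)$ of the statement.

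For the forward implication I would invoke the down-right property for a single path, namely the $\pi$ that runs along the two axes except near the origin, where it goes $(0,1)\to(1,1)\to(1,0)$. Among the variables $\Lambda(\pi)$ are $R^1_{1,1}$ (from the horizontal edge into $(1,1)$) and $R^2_{1,1}$ (from the vertical edge into $(1,1)$), so the down-right property forces $R^1_{1,1}\sim R^1$, $R^2_{1,1}\sim R^2$, and $R^1_{1,1}\perp R^2_{1,1}$, i.e.\ $(R^1_{1,1},R^2_{1,1})\overset{d}{=}(R^1,R^2)$. On the other hand, \eqref{equation ratio recursions} at $x=(1,1)$ expresses $(R^1_{1,1},R^2_{1,1})$ as the image of $\bigl(R^1_{1,0},R^2_{0,1},(Y^1_{1,1},Y^2_{1,1})\bigr)$ under the same map; here $R^1_{1,0}$ and $R^2_{0,1}$ are boundary weights, so this triple is independent with the laws of $R^1$, $R^2$, $(Y^1,Y^2)$, which makes its image distributed as $(\tilde R^1,\tilde R^2)$. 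Comparing the two descriptions yields $(\tilde R^1,\tilde R^2)\overset{d}{=}(R^1,R^2)$.

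For the converse I would induct on the number $n$ of unit squares enclosed between a down-right path $\pi$ and the axes. When $n=0$ the path is the axis path and $\Lambda(\pi)$ is exactly the family of i.i.d.\ boundary weights $\{R^1_{i,0}\}_{i\ge1}\cup\{R^2_{0,j}\}_{j\ge1}$, so the down-right property holds by the independence assumptions on the environment. If $n\ge1$ the enclosed region, being a nonempty finite staircase region, has an outer corner: a unit square $S$ that $\pi$ surrounds from above and to the right, so that $\pi$ visits the top-left, top-right, and bottom-right corners of $S$. Flipping that corner produces a down-right path $\pi'$ visiting instead the top-left, bottom-left, and bottom-right corners of $S$, which encloses $n-1$ squares and hence, by induction, has the down-right property. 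With $x$ the top-right corner of $S$, the collections $\Lambda(\pi)$ and $\Lambda(\pi')$ agree except that $\Lambda(\pi)$ contains $R^1_x,R^2_x$ where $\Lambda(\pi')$ contains $R^2_{x-\alpha_1},R^1_{x-\alpha_2}$; write $G$ for the common part. The induction hypothesis gives that $G$, $R^1_{x-\alpha_2}\sim R^1$, $R^2_{x-\alpha_1}\sim R^2$ are mutually independent. Since $Z_y$ depends only on the weights of edges with both endpoints coordinatewise at most $y$, every variable in $\Lambda(\pi')$ is a measurable function of the environment weights in the region enclosed by $\pi'$; the two edges carrying $(Y^1_x,Y^2_x)$ stick out above $\pi'$ (as $x$ lies strictly above and to the right of $\pi'$), so by the product structure of the environment $(Y^1_x,Y^2_x)$ is independent of $\Lambda(\pi')$. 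Thus $\bigl(G,R^1_{x-\alpha_2},R^2_{x-\alpha_1},(Y^1_x,Y^2_x)\bigr)$ is mutually independent with the correct marginals; feeding it through \eqref{equation ratio recursions} and using the hypothesis $(\tilde R^1,\tilde R^2)\overset{d}{=}(R^1,R^2)$ shows $(R^1_x,R^2_x)\overset{d}{=}(R^1,R^2)$ and that this pair is independent of $G$, whence $\Lambda(\pi)=G\cup\{R^1_x,R^2_x\}$ is mutually independent with the right marginals. A path enclosing infinitely many squares is then handled by truncation: independence of the whole collection $\Lambda(\pi)$ reduces to independence of each of its finite sub-collections, each of which involves only finitely many edges and hence sits inside $\Lambda(\pi')$ for a suitable finite-square $\pi'$ agreeing with $\pi$ on those edges.

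The step I expect to be the real work is the inductive step of the converse, and inside it the verification that $(Y^1_x,Y^2_x)$ is independent of the $\sigma$-algebra generated by $\Lambda(\pi')$. This rests on the ``locality'' observation that the variables along a down-right path see only the environment in the region it encloses, after which the rest is bookkeeping with \eqref{equation ratio recursions}.
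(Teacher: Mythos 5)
Your proof is correct and follows essentially the same route as the paper's: the forward implication uses the unique down-right path whose lower-left interior is $\{(0,0)\}$ (the one-bump path through $(1,1)$) together with the ratio recursion \eqref{equation ratio recursions}, and the converse is an induction on the number of enclosed unit squares (equivalently $|\text{Int}(\pi)|$) via corner-flipping, with a truncation argument for infinite interiors. The only difference is cosmetic: you spell out the locality argument showing $(Y^1_x,Y^2_x)$ is independent of $\Lambda(\pi')$, a step the paper simply asserts.
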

\begin{proof}[Proof of Lemma \ref{lemma down-right equiv}]
Given a down-right path $\pi$, define its lower-left interior
\[
\text{Int}(\pi):=\{x\in\mathbb{Z}_+^2 \text{ such that } x+(m,n)\in\{\pi\} \text{ for some } m,n\in \N \}.
\]
If the polymer model with edge weights $R^1, R^2, (Y^1,Y^2)$ has the down-right property, taking $\pi$ to be the unique down-right path with interior $\{(0,0)\}$ implies that $(R^1_{1,1}, R^2_{1,1} )\overset{d}{=}(R^1,R^2)$. Then \eqref{equation ratio recursions} and the fact that $\big(R^1_{1,0}, R^2_{0,1}, (Y^1_{1,1},Y^2_{1,1})\big)\overset{d}{=}\big(R^1, R^2, (Y^1,Y^2)\big)$ imply that $(\t{R}^1, \t{R}^2) \overset{d}{=}(R^1,R^2)$.

For the converse direction, we first prove the statement for $\pi$ with finite interior. The case when the interior is empty is true by assumption. Assume that the down-right property holds for all paths $\pi$ with $|\text{Int}(\pi)|= n$. Given a path $\pi$ with $|\text{Int}(\pi)|=n+1$ there exists $x$ such that $\pi$ traverses the right-down corner $\{x-\alpha_1,x,x-\alpha_2\}$. Let $\t{\pi}$ be the path which traverses the same points as $\pi$ with the exception of instead passing through the down-right corner $\{x-\alpha_1, x-\alpha_1-\alpha_2, x-\alpha_2\}$. Then $|\text{Int}(\t{\pi}) | = n$ and so $\big(R^1_{x-\alpha_2}, R^2_{x-\alpha_1}\big)\overset{d}{=}\big(R^1, R^2\big)$. Using \eqref{equation ratio recursions}, the assumption that $(\t{R}^1, \t{R}^2) \overset{d}{=}(R^1,R^2)$ and the independence of $(Y_x^1,Y_x^2)$ from collection $\Lambda(\t{\pi})$ gives us that the collection $\Lambda(\pi)$ has the desired property.

To prove the statement for arbitrary $\pi$, pick a finite subcollection $F$ of $\Lambda(\pi)$. Then there exists $\t{\pi}$ such that $\text{Int}(\t{\pi})$ is finite and $F\subset\Lambda(\t{\pi})$. Since the statement holds for down-right paths with finite interior, we are done.
\end{proof}

\begin{proposition}\label{proposition 4 models have DR property}
	Each of the four basic beta-gamma models, \eqref{model-IG} through \eqref{model-IB}, possesses the down-right property.
\end{proposition}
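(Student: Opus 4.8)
The plan is to reduce, via Lemma~\ref{lemma down-right equiv}, to a distributional fixed‑point identity and then to check it separately for each of the four models by beta--gamma calculus. By Lemma~\ref{lemma down-right equiv} it suffices, for each of \eqref{model-IG}--\eqref{model-IB}, to take $R^1,R^2,(Y^1,Y^2)$ independent with the model's laws and to show that the pair obtained from the ratio recursions \eqref{equation ratio recursions}, namely
\[
\t{R}^1 = Y^1 + Y^2\,\tfrac{R^1}{R^2}, \qquad \t{R}^2 = Y^1\,\tfrac{R^2}{R^1} + Y^2,
\]
satisfies $(\t{R}^1,\t{R}^2)\overset{d}{=}(R^1,R^2)$. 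One observation streamlines all four cases: from these formulas $\t{R}^1 = \tfrac{R^1}{R^2}\,\t{R}^2$, so the ratio $\t{R}^1/\t{R}^2$ is \emph{identically} equal to $\rho := R^1/R^2$; since $(a,b)\mapsto(a,a/b)$ is a bijection on $(0,\infty)^2$, the required identity is equivalent to $(\t{R}^1,\rho)\overset{d}{=}(R^1,R^1/R^2)$, and (because $\rho$ has the correct marginal for free and $(Y^1,Y^2)\perp\rho$) this in turn says exactly that, for a.e.\ $r$, the conditional law of $R^1$ given $\{R^1/R^2 = r\}$ equals the law of $Y^1 + Y^2 r$.

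The verification uses only the standard beta--gamma algebra: if $A\sim\text{Ga}(a,\beta)$ and $B\sim\text{Ga}(b,\beta)$ are independent then $A+B\sim\text{Ga}(a+b,\beta)$ and $\tfrac{A}{A+B}\sim\text{Be}(a,b)$ are independent, and conversely $(S,W)\mapsto(SW,\,S(1-W))$ pushes $\text{Ga}(a+b,\beta)\otimes\text{Be}(a,b)$ forward to $\text{Ga}(a,\beta)\otimes\text{Ga}(b,\beta)$ (in particular $SW\sim\text{Ga}(a,\beta)$); together with the elementary relations $\text{Ga}(a,\beta)\overset{d}{=}\beta^{-1}\text{Ga}(a,1)$, $\text{Be}^{-1}(a,b)-1\overset{d}{=}\text{Ga}(b)/\text{Ga}(a)$ (independent numerator and denominator), and the beta product identity $\text{Be}(a,b)\cdot\text{Be}(a+b,c)\overset{d}{=}\text{Be}(a,b+c)$ for independent factors. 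For the IG model this is the classical Burke-property computation of \cite{S2012}: writing $R^1 = A^{-1}$, $R^2 = B^{-1}$, $X = C^{-1}$ with $A\sim\text{Ga}(\mu-\theta,\beta)$, $B\sim\text{Ga}(\theta,\beta)$, $C\sim\text{Ga}(\mu,\beta)$ independent, one gets $(\t{R}^1)^{-1} = C\,\tfrac{A}{A+B}$ and $(\t{R}^2)^{-1} = C\,\tfrac{B}{A+B}$; their sum is $C\sim\text{Ga}(\mu,\beta)$ and the ratio of the first to the sum is $\tfrac{A}{A+B}\sim\text{Be}(\mu-\theta,\theta)$, independent of the sum, so the reverse beta--gamma map yields that $\bigl((\t{R}^1)^{-1},(\t{R}^2)^{-1}\bigr)$ is an independent $\text{Ga}(\mu-\theta,\beta)$, $\text{Ga}(\theta,\beta)$ pair, i.e.\ $(\t{R}^1,\t{R}^2)\overset{d}{=}(R^1,R^2)$. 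The G model goes the same way: writing $R^2 = (G_\theta+G_\mu)/G_\theta$ with independent $G_\theta\sim\text{Ga}(\theta,\beta)$, $G_\mu\sim\text{Ga}(\mu,\beta)$, the algebra gives $\rho = R^1\,\tfrac{G_\theta}{G_\theta+G_\mu}\sim\text{Ga}(\theta,\beta)$, independent of $X\sim\text{Ga}(\mu,\beta)$, whence $\t{R}^1 = X+\rho\sim\text{Ga}(\mu+\theta,\beta)$ while $\t{R}^2 = 1 + X/\rho$ with $X/\rho\overset{d}{=}\text{Ga}(\mu)/\text{Ga}(\theta)\overset{d}{=}\text{Be}^{-1}(\theta,\mu)-1$ independent of $\t{R}^1$ (since $\tfrac{X}{X+\rho}\sim\text{Be}(\mu,\theta)$ is independent of $X+\rho$), giving $(\t{R}^1,\t{R}^2)\overset{d}{=}(R^1,R^2)$.

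The B model is handled analogously, with beta variables and the beta product/algebra identities playing the role of the additive gamma splitting (so, for instance, $\t{R}^1 = 1 - (1-X)(1-\rho)$ with $1-X$ and $1-\rho$ independent betas of the appropriate parameters). The IB model is where I expect the main effort to go: both the boundary weight $R^2\sim\text{Be}^{-1}(\theta,\beta+\mu-\theta)-1$ and the bulk pair $(Y^1,Y^2) = (X,X-1)$ carry a unit shift, so the passage to underlying independent gammas and the ensuing identities are bulkier and require more care, even though conceptually the route is unchanged --- realize every weight as a ratio of independent gammas, compute the law of $\rho$, and verify the conditional identity isolated in the first paragraph. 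The only point in any of the four cases that needs genuine attention beyond bookkeeping is that one must match the \emph{joint} law of $(\t{R}^1,\t{R}^2)$, not merely its two marginals; this is precisely what the reverse (Lukacs-type) direction of the beta--gamma correspondence, equivalently the conditional reformulation above, is there to supply.
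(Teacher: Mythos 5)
Your reduction through Lemma~\ref{lemma down-right equiv} matches the paper's first step. Where you diverge is in what comes next: the paper's proof consists entirely of citations---the fixed-point identity $(\t{R}^1,\t{R}^2)\overset{d}{=}(R^1,R^2)$ is referred out to Lemma~3.2 of \cite{S2012} (IG), Lemma~6.3 of \cite{CSS2015} (G), Lemma~3.1 of \cite{BRS2016} (B), and Proposition~3.1 of \cite{T2016} (IB)---whereas you aim for a self-contained verification by beta--gamma algebra. Your observation that $\t{R}^1/\t{R}^2 = R^1/R^2$ identically, which collapses the matching of a bivariate law to a single conditional law given the (automatically preserved) ratio $\rho$, is a genuine streamlining not present in the paper; it reduces each case to computing the law of $\rho$ and checking that the conditional law of $R^1$ given $\rho = r$ agrees with that of $Y^1 + Y^2 r$. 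Your IG and G computations are correct and complete, and the B sketch ($\rho\sim\text{Be}(\theta,\mu+\beta)$ by the beta product identity, $\t{R}^1 = 1-(1-X)(1-\rho)$) does go through once one realizes $1-X\sim\text{Be}(\beta,\mu)$ and $1-\rho\sim\text{Be}(\mu+\beta,\theta)$ on a common gamma triple $(G_\beta,G_\mu,G_\theta)$ to control the joint law of $(\t{R}^1,\t{R}^2)$ and not just the marginals.

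The gap is the IB model. You defer it explicitly and never carry out the computation, and this is the one case where the unit shifts in $Y^2 = X-1$ and $R^2\sim\text{Be}^{-1}(\theta,\beta+\mu-\theta)-1$ make the ratio algebra genuinely nontrivial---the law of $\rho = R^1/R^2$ is no longer one of the five basic Mellin distributions, and the conditional identity requires more than a single application of the gamma/beta splitting. Since the whole reason to prefer your route over the paper's is self-containedness, leaving the hardest of the four cases as a sketch means the proposition is not actually proved. Either push the gamma bookkeeping through for IB (your own recipe---realize each weight as a ratio of independent gammas, find the law of $\rho$, verify the conditional identity---is the right one), or fall back to citing Proposition~3.1 of \cite{T2016} for that case as the paper does.
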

\begin{proof}
The $(\t{R}^1,\t{R}^2)\overset{d}{=}(R^1,R^2)$ condition in Lemma \ref{lemma down-right equiv} has been checked for the inverse-gamma, gamma, beta, and inverse-beta models by Lemma 3.2 of \cite{S2012}, Lemma 6.3 of \cite{CSS2015}, Lemma 3.1 of \cite{BRS2016}, and Proposition 3.1 of \cite{T2016} respectively. 
\end{proof}

The following lemma is an immediate consequence of the down-right property and the starting point for the proof of Proposition \ref{proposition variance formula}.

\begin{lemma}\label{lemma DR consequences}
If the polymer model with edge weights $R^1, R^2$, $(Y^1,Y^2)$ possesses the down-right property, then for all $(m,n)\in \Z^2_{+}$, 
\begin{enumerate}
\item $\E[\log Z_{m,n}] = m\E[\log R^1] + n\E[\log R^2]$,
\item $\Var[\log Z_{m,n}]= -m\Var[\log R^1]+n\Var[\log R^2]+2\Cov(S_N,S_S)$, 
\item $\Var[\log Z_{m,n}]= m\Var[\log R^1]-n\Var[\log R^2]+2\Cov(S_E,S_W)$,
\end{enumerate}
where
\begin{equation}\label{equation S's}
\begin{aligned}
 S_N&:=\log{Z_{m,n}}-\log{Z_{0,n}}=\sum_{i=1}^m\log{R^1_{i,n}}, &
 S_S&:=\log{Z_{m,0}}=\sum_{i=1}^m\log{R^1_{i,0}},\\
 S_E&:=\log{Z_{m,n}}-\log{Z_{m,0}}=\sum_{j=1}^n\log{R^2_{m,j}}, &
 S_W&:=\log{Z_{0,n}}=\sum_{j=1}^n\log{R^2_{0,j}}.
\end{aligned}
\end{equation}
\end{lemma}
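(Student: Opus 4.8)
The plan is to derive all three identities from the down-right property applied to a single, well-chosen down-right path, namely the "staircase boundary" path $\pi$ that runs along the positive $y$-axis down to the origin, then along the positive $x$-axis — more precisely, the down-right path through the vertices $(0, \infty), \dots, (0, n), (0, n-1), \dots, (0,0), (1,0), \dots, (m,0), \dots$ Actually the relevant finite portion is the collection of edges indexed by the vertical segment from $(0,n)$ down to $(0,0)$ followed by the horizontal segment from $(0,0)$ to $(m,0)$. The $\Lambda$-variables along this path are exactly $\log R^2_{0,j}$ for $j = 1, \dots, n$ and $\log R^1_{i,0}$ for $i = 1, \dots, m$, i.e. the summands of $S_W$ and $S_S$; by the down-right property these are mutually independent with the stated marginal laws. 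First I would record that this gives $\E[\log R^1_{i,0}] = \E[\log R^1]$, $\E[\log R^2_{0,j}] = \E[\log R^2]$, and similarly for variances, and that $S_S$ and $S_W$ are independent; part (1) is then immediate from $\log Z_{m,n} = S_N + S_W$ together with $\E[S_N] = \sum_i \E[\log R^1_{i,n}] = m\E[\log R^1]$ (using that the horizontal edges at height $n$ also lie on a down-right path), and likewise $\log Z_{m,n} = S_S + S_E$ with $\E[S_E] = n\E[\log R^2]$.

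For part (2), I would write $\log Z_{m,n} = S_N + S_W$, so $\Var[\log Z_{m,n}] = \Var[S_N] + \Var[S_W] + 2\Cov(S_N, S_W)$. The key move is to rewrite $\Cov(S_N, S_W)$: since $\log Z_{m,n} = S_S + S_E$ as well, and one can check $S_N - S_S = (\log Z_{m,n} - \log Z_{0,n}) - \log Z_{m,0}$ while $S_W - S_E = \log Z_{0,n} - (\log Z_{m,n} - \log Z_{m,0})$, so that $S_N - S_S = -(S_W - S_E)$, i.e. $S_N + S_W = S_S + S_E$ (both equal $\log Z_{m,n}$, as already noted). The cleaner route: take the down-right path $\pi'$ consisting of the horizontal segment at height $n$ from $(0,n)$ to $(m,n)$ followed by the vertical segment at abscissa $m$ from $(m,n)$ down to $(m,0)$; its $\Lambda$-variables are the summands of $S_N$ and $S_E$, so $S_N$ and $S_E$ are independent, $\Var[S_N] = m\Var[\log R^1]$, $\Var[S_E] = n\Var[\log R^2]$. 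Now expand $\Var[\log Z_{m,n}]$ two ways: from $\log Z_{m,n} = S_W + S_N$ we get one expression, and from the identity $S_N = S_S + S_E - S_W$ we substitute. Concretely, $\Var[\log Z_{m,n}] = \Var[S_W + S_N]$; using $S_W$ independent of $S_S$ (first path) and $S_N$ independent of $S_E$ (path $\pi'$), and $S_W + S_N = S_S + S_E$, a short covariance bookkeeping yields $\Var[\log Z_{m,n}] = -\Var[S_W] + \Var[S_N]$... I need to be careful here. Let me instead argue: $\Cov(S_N, S_W) = \Cov(\log Z_{m,n} - S_W,\, S_W) = \Cov(\log Z_{m,n}, S_W) - \Var[S_W]$, and symmetrically, and combine with $\Var[\log Z_{m,n}] = \Var[S_W] + \Var[S_N] + 2\Cov(S_N,S_W)$. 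Substituting and using $\Var[S_W] = n\Var[\log R^2]$, $\Var[S_N] = m\Var[\log R^1]$ gives $\Var[\log Z_{m,n}] = -m\Var[\log R^1] + n\Var[\log R^2] + 2\Cov(\log Z_{m,n}, S_W + S_N - \text{(correction)})$; chasing the algebra reduces the cross term to $2\Cov(S_N, S_S)$ after also invoking that $S_S$ and $S_E$ sit on the boundary path and $S_E$ is independent of $S_W$...

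The honest short version, and the one I would actually write: note $S_N + S_W = S_S + S_E = \log Z_{m,n}$, and that (i) $S_S \perp S_W$, (ii) $S_N \perp S_E$. Then
\[
\Var[\log Z_{m,n}] = \Cov(S_N + S_W,\ S_S + S_E) = \Cov(S_N, S_S) + \Cov(S_N, S_E) + \Cov(S_W, S_S) + \Cov(S_W, S_E),
\]
and by (ii) and (i) the middle two vanish, leaving $\Var[\log Z_{m,n}] = \Cov(S_N, S_S) + \Cov(S_W, S_E)$ — wait, that is not quite (2) either. So the correct identity to exploit is that we may also compute $\Var[\log Z_{m,n}]$ as $\Var[S_N + S_W]$ directly: $\Var[S_N] + \Var[S_W] + 2\Cov(S_N, S_W)$. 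Equating with the bilinear expansion above and solving for $\Cov(S_N,S_W)$... this is getting circular. The clean statement is: use $\log Z_{m,n} = S_S + S_E$ to get $\Var = \Var[S_S] + \Var[S_E] + 2\Cov(S_S, S_E)$ — no, $S_S \not\perp S_E$ in general. I will resolve this by the standard trick from \cite{S2012}: write $\log Z_{m,n} = S_N + S_W$, expand $\Var[\log Z_{m,n}] = \Var[S_N] + \Var[S_W] + 2\Cov(S_N, S_W)$, then use $S_W = \log Z_{m,n} - S_N = S_S + S_E - S_N$ to substitute one copy of $S_W$ in the covariance: $\Cov(S_N, S_W) = \Cov(S_N, S_S + S_E - S_N) = \Cov(S_N,S_S) + \Cov(S_N, S_E) - \Var[S_N] = \Cov(S_N, S_S) - \Var[S_N]$ by (ii). Hence $\Var[\log Z_{m,n}] = \Var[S_N] + \Var[S_W] + 2\Cov(S_N,S_S) - 2\Var[S_N] = -\Var[S_N] + \Var[S_W] + 2\Cov(S_N, S_S) = -m\Var[\log R^1] + n\Var[\log R^2] + 2\Cov(S_N, S_S)$, which is (2). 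Part (3) is the mirror image: use $\log Z_{m,n} = S_E + S_S$, expand, and substitute $S_S = S_N + S_W - S_E$, invoking (i) that $S_S \perp S_W$ — careful, here I need $S_W \perp S_E$; that holds because both lie on the first boundary path $\pi$ together with... actually $S_W$ and $S_E$: $S_W$ is on the vertical boundary axis and $S_E$ is on the line $x = m$, and these together with appropriate bulk form a down-right path (vertical axis up, across the top at... no). I would simply note by the left-right / up-down symmetry of the setup (transpose the lattice) that (3) follows from (2) applied to the transposed model, or re-run the same substitution argument with roles of horizontal/vertical swapped.

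The main obstacle is purely organizational: correctly identifying which pairs among $\{S_N, S_S, S_E, S_W\}$ are independent — each such claim requires exhibiting a single down-right path whose associated $\Lambda$-collection contains exactly the summands of both sums in question (for $S_S \perp S_W$: the L-shaped boundary path; for $S_N \perp S_E$: the reflected L-shaped path at the far corner) — and then doing the covariance bookkeeping in the right order so that the "wrong-sign" variance term appears. There is no analytic difficulty: everything is a finite linear-algebraic manipulation of covariances once the independence inputs from the down-right property are in hand. I would present the four independence facts as a preliminary observation, then derive (1), (2), (3) in sequence, obtaining (3) from (2) by the transpose symmetry to avoid repeating the computation.
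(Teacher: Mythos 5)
Your final argument for part (2) is exactly the paper's: expand $\Var[\log Z_{m,n}] = \Var[S_N]+\Var[S_W]+2\Cov(S_N,S_W)$, substitute $S_W = S_E+S_S-S_N$, and use $S_N\perp S_E$ to kill the cross term, leaving $-\Var[S_N]+\Var[S_W]+2\Cov(S_N,S_S)$. The preceding false starts (trying the bilinear expansion $\Cov(S_N+S_W, S_S+S_E)$, wondering whether $S_S\perp S_E$, etc.) are noise, but you correctly abandoned them and arrived at the right substitution.

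One small point worth fixing: near the end you worry that part (3) requires $S_W\perp S_E$, and fall back on transpose symmetry. In fact the direct mirror computation needs only the \emph{same} independence fact (ii), $S_N\perp S_E$: writing $\log Z_{m,n}=S_E+S_S$ and substituting $S_S=S_N+S_W-S_E$ gives
\[
\Cov(S_E,S_S)=\Cov(S_E,S_N)+\Cov(S_E,S_W)-\Var[S_E]=\Cov(S_E,S_W)-\Var[S_E],
\]
so $\Var[\log Z_{m,n}]=-\Var[S_E]+\Var[S_S]+2\Cov(S_E,S_W)=m\Var[\log R^1]-n\Var[\log R^2]+2\Cov(S_E,S_W)$. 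No new independence input is needed; indeed, fact (i) $S_S\perp S_W$ is not actually used in the variance identities at all (though it is, of course, true). Your transpose-symmetry route is also valid, just less economical. With that clarified, your proposal matches the paper's proof in approach and substance.
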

\begin{proof}
 
 By the down-right property $S_S$ is independent of $S_W$, $S_N$ is independent of $S_E$, and 
 \begin{align*}
 \mathbb{V}\text{ar}[S_N]&=\mathbb{V}\text{ar}[S_S]=m\Var[\log R^1] &
 \mathbb{V}\text{ar}[S_E]&=\mathbb{V}\text{ar}[S_W]=n\Var[\log R^2].
 \end{align*}
 These facts along with the equalities $\log{Z_{m,n}}=S_N+S_W=S_E+S_S$ gives (a) and  
 \begin{align*}
 \Var[\log{Z_{m,n}}] &= \Var[S_N]+\Var[S_W]+2\Cov(S_N,S_W)\\
  &= \Var[S_N]+\Var[S_W]+2\Cov(S_N,S_E+S_S-S_N)\\
   &= -\Var[S_N]+\Var[S_W]+2\Cov(S_N,S_S)\\
   &= -m\Var[\log R^1]+n\Var[\log R^2]+2\Cov(S_N,S_S).
 \end{align*}
Similarly, 
  \begin{align*}
\Var[\log{Z_{m,n}}] =-n\Var[\log R^2]+m\Var[\log R^1]+2\Cov(S_E,S_W).
 \end{align*} 
\end{proof}

\section{The Mellin transform framework}\label{section Mellin}
In this section we develop a framework which allows us to treat the four basic beta-gamma models simultaneously.

Given a function $f:(0,\infty)\rightarrow[0,\infty)$, write $M_f$ for its Mellin transform
 \[
M_f(a):= \int_0^\infty x^{a-1}f(x)dx
\]
for any $a\in\R$ such that the integral converges. Define 
\[
D(M_f):=\text{interior}(\{a\in\R : 0<M_f(a)<\infty\}).
\]
\begin{definition}
Given a function $f:(0,\infty)\to [0,\infty)$ such that $D(M_f)$ is non-empty, we define a family of densities on $(0,\infty)$ parametrized by $a\in D(M_f)$:
\begin{equation}\label{equation Mellin density}
\rho_{f,a}(x):= M_f(a)^{-1}x^{a-1}f(x).
\end{equation}
We write $X\sim m_f(a)$ to denote that the random variable $X$ has density $\rho_{f,a}$.
\end{definition}

\begin{remark}\label{remark - Mellin consequences}
If $f:(0,\infty)\to [0,\infty)$ is such that $D(M_f)$ is non-empty, then $M_f$ is $C^\infty$ throughout $D(M_f)$. Furthermore, if $X\sim m_f(a)$, then 
\begin{enumerate}
\item $\log X$ has finite exponential moments. That is, there exists some $\epsilon>0$ such that 
\[
\E[e^{\epsilon |\log X|}]\leq \E[X^{\epsilon}]+\E[X^{-\epsilon}]=\frac{M_f(a+\epsilon)+M_f(a-\epsilon)}{M_f(a)}<\infty.
\]
\item For all $k\in \N$,
\[\frac{\p^k}{\p a^k} M_f(a)=M_f(a) \E[(\log X)^k].\]
\item $\E [\log X]= \psi_0^f(a)$ and $\Var[\log X] = \psi_1^f(a)$, where 
\[
\psi^f_n(a):=\frac{\p^{n+1}}{\p a^{n+1}}\log{M_f(a}) \text{ for } n\in\mathbb{Z}_+.
\]
\end{enumerate}
\end{remark}

The following remark says that random variables with densities of the form \eqref{equation Mellin density} are closed under inversion.

\begin{remark}\label{remark - Mellin inversion}
If $f:(0,\infty)\rightarrow [0,\infty)$ is such that $D(M_f)$ is non-empty and $g(x):=f(\frac{1}{x})$ for $x\in (0,\infty)$, then for all $a\in D(M_f)$,
\begin{enumerate}
\item  $X\sim m_f(a) \Leftrightarrow X^{-1}\sim m_g(-a)$,
\item $M_f(a)=M_g(-a)$  and therefore  $D(M_g)=-D(M_f)$,
\item  $\psi_n^f(a)=(-1)^{n+1} \psi_g^f(-a)$ for all $n\in \N$.
\end{enumerate}
\end{remark}

\begin{definition}
Let $f^j:(0,\infty) \to [0,\infty)$ be such that $D(M_{f^j})$ is non-empty for $j=1,2$. We say that the polymer environment is {\rm Mellin-type with respect to $(f^1, f^2)$} if $(R^1, R^2)\sim m_{f^1}(a_1)\otimes m_{f^2}(a_2)$ for some $a_j \in D(M_{f^j})$.
\end{definition}
When the polymer environment is Mellin-type with parameters $(a_1, a_2)$, we use $\P^{(a_1,a_2)}$, $\E^{(a_1,a_2)}$, $\Var^{(a_1,a_2)}$, $\Cov^{(a_1,a_2)}$ in place of $\P$, $\E$, $\Var$, $\Cov$ respectively.

\subsection{The four basic beta-gamma models are Mellin-type}

We first specify functions $f$ to obtain each of the random variables appearing in the four basic beta-gamma models. Note that the fourth column in Table \ref{table f} specifies the distribution of the random variable corresponding to $f$. We let $\text{B}(a,b) = \frac{\Gamma(a) \Gamma(b)}{\Gamma(a+b)}$ denote the beta function and $\Psi_n(x)= \frac{\p^{n+1}}{\p x^{n+1}}\log \Gamma(x)$ the polygamma function of order $n$. For the Table \ref{table f} we assume $b>0$ and $a\in D(M_f)$.
\begin{figure}[H]
\begin{equation}
	\begin{array}{|c|c|c|c|c|}
		\hline  f(x) & D(M_f) & M_f(a) & m_f(a) & \psi_n^f(a)   \\
		\hline \hline
		 e^{-bx} & (0,\infty) & \Gamma(a)/b^a & \text{Ga}(a,b) & \Psi_n(a) - \delta_{n,0}\log b  \\ \hline
		 e^{-b/x} & (-\infty,0) & \Gamma(-a) b^a &\text{Ga}^{-1}(-a,b) & (-1)^{n+1}(\Psi_n(-a)-\delta_{n,0}\log b)   \\ \hline
		 (1-x)^{b-1}\ind_{\{0<x<1\}} & (0,\infty) & \text{B}(a,b) &\text{Be}(a,b) & \Psi_n(a)-\Psi_n(a+b)   \\ \hline
		 (1-\frac{1}{x})^{b-1}\ind_{\{x>1\}} & (-\infty,0) & \text{B}(-a,b) &\text{Be}^{-1}(-a,b) & (-1)^{n+1}(\Psi_n(-a)-\Psi_n(-a+b))   \\ \hline
		 (\frac{x}{x+1})^{b} & (-b,0) & \text{B}(-a,b+a) & \text{Be}^{-1}(-a,b+a)-1 & \Psi_n(a+b) + (-1)^{n+1}\Psi_n(-a)   \\ \hline
	\end{array}
\end{equation}
\caption{Mellin framework data for the distributions appearing in the four basic beta-gamma models.}
\label{table f}
\end{figure}



To express the distribution of the polymer environment in each of the four basic beta-gamma models given in \eqref{model-IG} through \eqref{model-IB} within this Mellin framework, we let 
\begin{equation}\label{polymer environment distribution}
	(R^1,R^2,X) \sim m_{f^1}(a_1) \otimes m_{f^2}(a_2) \otimes m_{f^1}(a_3),
\end{equation}
where the functions $f^1$, $f^2$ and parameters $a_j$, $j=1,2,3$ are given in Table \ref{table 4 models}. Recall that in each of the models, $(Y^1,Y^2)$ are given in terms of $X$. For Table \ref{table 4 models} we assume $\mu,\beta>0$.
\begin{figure}[H] 
\[
	\begin{array}{|l||c|c|c|l|}
		\hline
		\text{Model} & f^1(x) & f^2(x) & (a_1,a_2,a_3)& \\ \hline \hline
		\text{IG} & e^{-\beta/x} & e^{-\beta/x} & (\theta-\mu,-\theta,-\mu)& \theta\in(0,\mu) \\ \hline 
		\text{G} & e^{-\beta x} & (1-\frac{1}{x})^{\mu-1}\ind_{\{x>1\}} & (\mu +\theta,-\theta,\mu) & \theta\in(0,\infty)\\ \hline
		\text{B} & (1-x)^{\beta-1}\ind_{\{0<x<1\}} & (1-\frac{1}{x})^{\mu-1}\ind_{\{x>1\}} & (\mu +\theta,-\theta,\mu)& \theta\in (0,\infty) \\ \hline
		\text{IB} &  (1-\frac{1}{x})^{\beta-1}\ind_{\{x>1\}} & (\frac{x}{x+1})^{(\beta+\mu)} & (\theta-\mu,-\theta,-\mu)&\theta\in(0,\mu) \\ \hline 
	\end{array}
\]
\caption{Functions and parameters to fit the four basic beta-gamma models into the Mellin framework.}
\label{table 4 models}
\end{figure}
\begin{remark}\label{remark parameter DR property}
For each fixed value of the bulk parameter $a_3$, we obtain a family of models with boundary parameters $a_1$ and $a_2$ satisfying $a_1 + a_2 = a_3$. For any such $a_1$ and $a_2$, by Proposition \ref{proposition 4 models have DR property} these models will have the down-right property.
\end{remark}

\subsection{Coupling of polymer environments}
In order to compare polymer environments with different parameters, we use a coupling to express the boundary weights as functions of i.i.d.\ uniform$(0,1)$ random variables.

If $f:(0,\infty) \to [0,\infty)$ is such that $D(M_f)$ is non-empty, write $F^f$ for the CDF of the random variable $X\sim m_f(a)$.  Specifically, $F^f:D(M_f)\times [0,\infty)\rightarrow [0,1]$ is given by
\[
F^f(a,x)=\frac{1}{M_f(a)}\int_0^xy^{a-1}f(y)dy.
\]
Define the quantile function 
\begin{equation} 
H^f(a,p) := \inf\{x: p\leq F^f(a,x)\}.\label{eq def Hf}
\end{equation}
If the random variable $\eta$ is uniformly distributed on the interval $(0,1)$, then $H^f(a,\eta)\sim m_f(a)$. 

Suppose that a polymer environment $\omega$ is Mellin-type with respect to $(f^1,f^2)$ with parameters $(b_1,b_2)$. Let $\{\eta_i^1,\eta_j^2:i,j\in\N\}$ be i.i.d.\ uniform$(0,1)$ random variables that are independent of the bulk weights $\{(Y^1_z,Y^2_z):z\in \N^2\}$.  Write $\hat{\P}$, $\widehat{\E}$, and $\hat{\Var}$ for the probability measure and the corresponding expectation and variance of these uniform random variables and the bulk weights. Define the coupled environment
\begin{equation}
\omega^{(b_1,b_2)}:=\{H^{f^1}(b_1,\eta^1_i),H^{f^2}(b_2,\eta^2_j), (Y^1_z, Y^2_z): i\in\N,j\in\N, z\in \N^2\}.\label{def coupled environment}
\end{equation}
Note that this environment is equal in distribution to the original environment $\omega$.

To specifically denote weights accumulated by a path,  the partition function, the quenched measure, and the annealed expectation, associated to the coupled environment $\omega^{(b_1,b_2)}$, define
\begin{equation}
\begin{split}
W(b_1,b_2)(x_\centerdot) &:=\prod_{k=1}^{m+n}\omega^{(b_1,b_2)}_{(x_{k-1},x_k)}  \qquad \text{ for } x_\centerdot\in\Pi_{m,n}\\
Z_{m,n}(b_1,b_2) &:=\sum_{x_\centerdot\in\Pi_{m,n}}W(b_1,b_2)(x_\centerdot)\\
Q_{m,n}^{(b_1,b_2)}(A) &:= \frac{1}{Z_{m,n}(b_1,b_2)}\sum_{x_\centerdot\in A} W(b_1,b_2)(x_\centerdot) \qquad \text{ for } A\subset \Pi_{m,n}\\
E_{m,n}^{(b_1,b_2)} [\bullet] &:= \hat{\E}\left[E^{Q_{m,n}^{(b_1,b_2)}}[\bullet]\right].
\end{split}\label{equations coupling} 
\end{equation}
Recall the definition of the exit points $t_j$ \eqref{def exit points}. We can decompose the weight accumulated along a path to isolate the dependence on boundary weights 
\begin{equation}
W(b_1,b_2)(x_\centerdot)=\prod_{i=1}^{t_1} H^{f^1}(b_1,\eta^1_i)\prod_{j=1}^{t_2}H^{f^2}(b_2,\eta_j^2)\prod_{k=(t_1\vee t_2)+1}^{m+n}\omega^{(b_1,b_2)}_{(x_{k-1},x_k)}. \label{equation coupled weight decomp}
\end{equation}
Notice that one of the first two products will be empty and the third product involves only the bulk weights.

If we assume that $f:(0,\infty)\to [0,\infty)$ has open support, is continuous on its support, and $D(M_f)$ is non-empty, then $F^f$ is continuously differentiable on the set $D(M_f)\times \supp(f)$. By the implicit function theorem, $H^f$ is continuously differentiable and
for all $(a,p)\in D(M_f)\times (0,1)$, we have 
\begin{equation}
\frac{\partial H^f}{\partial a}(a,p)=\frac{-\frac{\partial F^f}{\partial a}(a,H^f(a,p))}{\frac{\partial F^f}{\partial x}(a,H^f(a,p))}=H^f(a,p)L^f(a,H^f(a,p))\label{equation partial H}
\end{equation}
where $L^f$ is given by
\begin{equation}
L^f(a,x):=\frac{x^{-a}}{f(x)}\int_0^x(\psi^f_0(a)-\log y)y^{a-1}f(y)dy=-\frac{x^{-a}}{f(x)}\int_x^{\infty}(\psi^f_0(a)-\log y)y^{a-1}f(y)dy.\label{eq Lf integral form}
\end{equation}
The second equality follows from the definition of $\psi_0^f(a)$.
Notice that \[L^f(a,x) = -\frac{1}{x\rho_{f,a}(x)} \Cov(\log X, \ind_{\{X\leq x\}})= L_X(x) \text{ (as defined in \eqref{definition L_X})}\]
when $X\sim m_f(a)$, and therefore $L^f(a,x)\geq 0$.

The following hypothesis collects technical conditions for the function $f$ used in the sequel.

\begin{hypothesis}\label{hypothesis CSS}
Suppose that $f:(0,\infty) \to [0,\infty)$ is such that $D(M_f)$ is non-empty, $f$ has open support, is differentiable on its support, and for all compact $K\subset D(M_f)$ there exists a constant $C$ depending only on $K$ such that the following hold for all $a \in K$:
\begin{gather}
L^f(a,x)\leq C(1+|\log x|)\qquad \text{for all } x\in \spt(f),\label{CSS 1}\\
\int_0^1\Bigl|\frac{\p}{\p a}L^f(a,H^f(a,p))\Bigr|dp\leq C.\label{CSS 2}
\end{gather}  
\end{hypothesis}

\begin{remark}\label{remark f is CSS}
If $X\sim m_f(a)$ where $f$ satisfies Hypothesis \ref{hypothesis CSS}, then by \eqref{CSS 1} and Remark \ref{remark - Mellin consequences}, $L_X(X)$ has finite exponential moments. By Lemma \ref{lemma f are CSS} in the appendix, each of the functions $f$ corresponding to the random variables appearing in the four basic beta-gamma models (see Table \ref{table f}) satisfies Hypothesis \ref{hypothesis CSS}.
\end{remark}

\begin{lemma}\label{lemma covariance explicit}
Assume that the polymer environment is Mellin-type with respect to $(f^1,f^2)$, where $f^1$ and $f^2$ satisfy Hypothesis \ref{hypothesis CSS}. Further assume that $\log Y^1$ and $\log Y^2$ have finite variance. Recall the notation \eqref{equation S's}. Then for all $(m,n)\in \Z_+^2$ 
\begin{align}
\Cov(S_N, S_S) &= E_{m,n}[\sum_{i=1}^{t_1} L_{R^1}(R^1_{i,0})],\label{equation cov explicit 1}\\
\Cov(S_E, S_W) &= E_{m,n}[\sum_{j=1}^{t_2} L_{R^2}(R^2_{0,j})].\label{equation cov explicit 2}
\end{align}
\end{lemma}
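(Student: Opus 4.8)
The plan is to compute $\Cov(S_N, S_S)$ by differentiating with respect to the boundary parameter $a_1$ along the coupled environment. The starting point is the identity $\E[\log Z_{m,n}] = m\,\psi_0^{f^1}(a_1) + n\,\psi_0^{f^2}(a_2)$ from Lemma \ref{lemma DR consequences}(a) and Remark \ref{remark - Mellin consequences}(c). The idea, following \cite{S2012}, is that varying $a_1$ moves the law of the horizontal boundary weights $R^1_{i,0}$ but leaves the vertical boundary and bulk weights fixed, so the $a_1$-derivative of $\E[\log Z_{m,n}]$ picks out exactly a correlation between $\log Z_{m,n}$ and $S_S = \sum_i \log R^1_{i,0}$. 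Concretely, in the coupled environment \eqref{def coupled environment}--\eqref{equations coupling} with $(b_1,b_2) = (a_1,a_2)$, I would write $\hat\E[\log Z_{m,n}(a_1,a_2)] = m\,\psi_0^{f^1}(a_1) + n\,\psi_0^{f^2}(a_2)$ for $a_1$ in a neighborhood of its given value and differentiate both sides in $a_1$.

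For the left-hand side, I would differentiate under the expectation and inside the path sum: using the recursion $Z_{m,n} = \sum_{x_\centerdot} W(a_1,a_2)(x_\centerdot)$ together with the decomposition \eqref{equation coupled weight decomp}, one gets
\[
\frac{\p}{\p a_1}\log Z_{m,n}(a_1,a_2) = \sum_{x_\centerdot \in \Pi_{m,n}} Q_{m,n}^{(a_1,a_2)}(x_\centerdot)\, \frac{\p}{\p a_1}\log W(a_1,a_2)(x_\centerdot) = E^{Q_{m,n}^{(a_1,a_2)}}\Bigl[\sum_{i=1}^{t_1} \frac{1}{H^{f^1}(a_1,\eta^1_i)}\frac{\p H^{f^1}}{\p a_1}(a_1,\eta^1_i)\Bigr],
\]
and by \eqref{equation partial H} the summand equals $L^{f^1}(a_1, H^{f^1}(a_1,\eta^1_i)) = L_{R^1}(R^1_{i,0})$. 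Taking $\hat\E$ and recalling $E_{m,n}^{(a_1,a_2)}[\bullet] = \hat\E[E^{Q^{(a_1,a_2)}_{m,n}}[\bullet]]$ gives $\frac{\p}{\p a_1}\hat\E[\log Z_{m,n}] = E_{m,n}[\sum_{i=1}^{t_1} L_{R^1}(R^1_{i,0})]$ (using that the coupled environment has the same law as $\omega$). For the right-hand side, $\frac{\p}{\p a_1}\bigl(m\,\psi_0^{f^1}(a_1)\bigr) = m\,\psi_1^{f^1}(a_1) = m\,\Var[\log R^1]$ by Remark \ref{remark - Mellin consequences}(c). Comparing with Lemma \ref{lemma DR consequences}(b), which says $\Var[\log Z_{m,n}] = -m\Var[\log R^1] + n\Var[\log R^2] + 2\Cov(S_N,S_S)$, is not quite the right route; instead I would directly match: the computation above shows $\frac{\p}{\p a_1}\hat\E[\log Z_{m,n}] = E_{m,n}[\sum_{i=1}^{t_1} L_{R^1}(R^1_{i,0})]$, and a separate, more elementary computation identifies this same $a_1$-derivative with $\Cov(S_N,S_S)$. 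For the latter, I use $\log Z_{m,n} = S_N + S_W$ with $S_W$ independent of $a_1$ and of the horizontal boundary; differentiating $\hat\E[S_N + S_W]$ and also using that under the coupling $\frac{\p}{\p a_1}\hat\E[g(\omega^{(a_1,a_2)})]$ for any nice $g$ equals $\widehat{\Cov}$-type quantities — more cleanly, one notes $\Cov(S_N, S_S) = \Cov(S_N, S_S)$ and $S_S$ depends on $a_1$ through $\sum_i \log H^{f^1}(a_1,\eta^1_i)$ with $\frac{\p}{\p a_1}\E[S_S] = m\psi_1^{f^1}(a_1)$; then the standard interchange argument (as in \cite{S2012}, Lemma 4.2 or thereabouts) equates $\frac{\p}{\p a_1}\hat\E[\log Z_{m,n}]$ with $\Cov(\log Z_{m,n}, \sum_i \partial_{a_1}\log H^{f^1}) / \,\cdot\,$ — I would instead follow the cleanest available form: differentiate the identity $\hat\E[\log Z_{m,n}] = m\psi_0^{f^1}(a_1) + n\psi_0^{f^2}(a_2)$ and separately the tautology, obtaining $\Cov(S_N,S_S) = E_{m,n}[\sum_{i=1}^{t_1}L_{R^1}(R^1_{i,0})]$ after reconciling the two expressions for the derivative. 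Equation \eqref{equation cov explicit 2} follows by the symmetric argument, differentiating in $a_2$ and using $\log Z_{m,n} = S_E + S_S$ with $S_S$ the $a_2$-independent piece.

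The main obstacle is the justification of differentiating under the expectation and the path sum, i.e. the dominated-convergence steps. This is exactly where Hypothesis \ref{hypothesis CSS} enters: \eqref{CSS 1} gives $L^{f^1}(a_1,x) \le C(1+|\log x|)$, which together with Remark \ref{remark - Mellin consequences}(1) (finite exponential moments of $\log R^1$) bounds $\frac{\p}{\p a_1}\log H^{f^1}(a_1,\eta^1_i)$ uniformly in $a_1$ over a compact neighborhood by an integrable random variable, and \eqref{CSS 2} controls the $a_1$-derivative of that bound, so the difference quotients for $\log Z_{m,n}(a_1,a_2)$ are dominated uniformly; the assumed finite variance of $\log Y^1, \log Y^2$ ensures all the quantities in sight are finite. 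I expect the bookkeeping to parallel \cite{S2012} closely, with the Mellin-framework quantities $L^f$, $\psi^f_n$ substituting for their log-gamma-specific counterparts, and the technical lemmas in Appendix \ref{appendix misc} supplying the domination estimates.
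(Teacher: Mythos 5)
Your overall plan — compute $\frac{\p}{\p a_1}\E[\log Z_{m,n}]$ with $a_2$ fixed in two ways, one giving the covariance and the other giving the $L$-sum — is the right skeleton, and your coupling-side computation that $\frac{\p}{\p a_1}\hat\E[\log Z_{m,n}(a_1,a_2)] = E_{m,n}[\sum_{i=1}^{t_1}L_{R^1}(R^1_{i,0})]$ matches \eqref{equation deriv logZ} and \eqref{equation coupled L sum} in the paper (modulo the justification of interchanging $\partial_{a_1}$ with $\hat\E$, which the paper handles via the integrated identity \eqref{equation integral equality} together with continuity of both sides). However, there is a genuine gap on the other side of the equation, and you never close it.

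Your repeated starting point, ``write $\hat\E[\log Z_{m,n}(a_1,a_2)] = m\psi_0^{f^1}(a_1) + n\psi_0^{f^2}(a_2)$ for $a_1$ in a neighborhood of its given value and differentiate both sides in $a_1$,'' is invalid: that closed form comes from Lemma \ref{lemma DR consequences}(a), which needs the down-right property, and by Remark \ref{remark parameter DR property} the down-right property holds only on the constraint surface $a_1 + a_2 = a_3$. If you vary $a_1$ with $a_2$ fixed, you leave that surface and the identity fails, so there is nothing to differentiate. If this route were available it would give $E_{m,n}[\sum L_{R^1}] = m\psi_1^{f^1}(a_1) = m\Var[\log R^1]$, and feeding that into Lemma \ref{lemma DR consequences}(b) would yield $\Var[\log Z_{m,n}] = m\Var[\log R^1] + n\Var[\log R^2]$, of order $N$, flatly contradicting Theorem \ref{theorem stationary variance bounds}. (If instead you stay on the constraint surface by moving $a_2 = a_3 - a_1$ with $a_1$, you only recover the \emph{difference} of \eqref{equation var formula 1} and \eqref{equation var formula 2}, which is weaker than the lemma.)

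What is actually missing is the identification $\frac{\p}{\p b_1}\E^{(b_1,a_2)}[S_N] = \Cov^{(b_1,a_2)}(S_N,S_S)$ off the constraint surface. The paper gets this from Lemma \ref{lem cov calc}: an exponential-family / score-function computation, valid for a single fixed $a_2$, that differentiates the law of the boundary weights $R^1_{1,0},\ldots,R^1_{m,0}$ (not the coupled realizations) and produces a covariance against their joint score $S_S = \sum_i \log R^1_{i,0}$, with the second-moment hypothesis supplied by Lemma \ref{lemma finite square moments}. You gesture at a ``more elementary computation'' that would give this, but your description circles back to the unavailable closed form and ends with an undetermined covariance expression, so this half of the argument is not actually in your proposal. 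You should replace the $\psi_0$-differentiation with a score-function identity of this kind, and then supply the continuity/integration step (as in \eqref{equation integral equality}) to pass from integrated to pointwise equality.
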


\begin{proof}
By assumption, there exists $(a_1,a_2) \in D(M_{f^1}) \times D(M_{f^2})$ such that $(R^1,R^2)\sim m_{f^1}(a_1) \otimes m_{f^2}(a_2)$. There exist open neighborhoods $U_j$ about $a_j$ contained in $D(M_{f^j})$ for $j=1,2$. We then show that 
\begin{align}
\frac{\p}{\p b_1}\E^{(b_1,a_2)}[S_N]&=\Cov^{(b_1,a_2)}(S_N,S_S)  \text{ for all } b_1\in U_1 \label{equation cov deriv 1},\\
\frac{\p}{\p b_2}\E^{(a_1,b_2)}[S_E]&=\Cov^{(a_1,b_2)}(S_E,S_W)  \text{ for all } b_2\in U_2, \label{equation cov deriv 2}
\end{align}
and that the mappings $b_1\mapsto \Cov^{(b_1,a_2)}(S_N,S_S)$ and $b_2\mapsto \Cov^{(a_1,b_2)}(S_E,S_W)$ are continuous.
We begin with \eqref{equation cov deriv 1}. We will vary the parameter $b_1$ of the weights $R^1_{i,0}$ while keeping the parameter $a_2$ of the weights $R^2_{0,j}$ fixed. Let $\widetilde{\E}$ be the expectation over $\{R^2_{0,j},(Y^1_{x},Y^2_x)\}_{j\in\N,x\in\N\times\N}$. By Remark \ref{remark - Mellin consequences} and Lemma \ref{lemma finite square moments}, $\E^{(b_1, a_2)}[S_N^2]<\infty$ for all $b_1\in U_1$. Then $\E^{(b_1,a_2)}[S_N]=\E^{b_1}[\widetilde{\E}[S_N]]$ where $\E^{b_1}$ denotes the expectation over $\{R^1_{i,0}\}_{i=1}^m$ when $R^1\sim m_{f^1}(b_1)$. We now invoke Lemma \ref{lem cov calc}. Specifically, we use $r=m$, $X_k=R^1_{k,0}$, $f_k=f^1$ for all $k=1,\dots,m$ and $A(R^1_{1,0}, \ldots, R^1_{m,0})=\widetilde{\E}[S_N]$ to get, for all $b_1\in U_1$,
\begin{align*}
\frac{\p}{\p b_1}\E^{(b_1,a_2)}[S_N]&= \frac{\p}{\p b_1} \E^{b_1}[A(X_1,\ldots, X_m)]=\Cov^{b_1}(A(X_1,\ldots, X_m),S_S)= \Cov^{(b_1,a_2)}(S_N,S_S)
\end{align*}
 and $U_1\owns b_1\mapsto \Cov^{(b_1,a_2)}(S_N,S_S)$ is continuous. The third equality follows from the fact that the collection $\{R^2_{0,j},(Y^1_x,Y^2_x)\}_{j\in\N,x \in\N\times\N}$ is independent of $S_S$.   The second moment condition of Lemma \ref{lem cov calc} is satisfied since 
\[
\E^{b_1}[A(X_1,\ldots, X_r)^2]=\E^{b_1}[(\widetilde{\E}[S_N])^2]\leq \E^{b_1}[\widetilde{\E}[S_N^2]]=\E^{(b_1,a_2)}[S_N^2]<\infty \text{ for all } b_1\in U_1.
\] 
A similar argument yields \eqref{equation cov deriv 2}.

Using the coupling \eqref{def coupled environment}
\begin{align}\label{equation coupled L sum}
E_{m,n}[\sum_{i=1}^{t_1} L_{R^1}(R^1_{i,0})] = E_{m,n}^{(a_1,a_2)} [\sum_{i=1}^{t_1}L^{f^1}(a_1,H^{f^1}(a_1, \eta_i^1))].
\end{align}
Taking the derivative of \eqref{equation coupled weight decomp} and using \eqref{equation partial H}, for $j=1,2$
\begin{equation}
\frac{\p}{\p b_j}\log(W(b_1,b_2)(x_\centerdot))=\sum_{k=1}^{t_j}\frac{\p}{\p b_j}\log H^{f^j}(b_j,\eta_k^j)=\sum_{k=1}^{t_j}L^{f^j}(b_j,H^{f^j}(b_j\eta_k^j)). \label{cov calc 2}
\end{equation}
Therefore
\begin{equation}
\frac{\p}{\p b_j}W(b_1,b_2)(x_\centerdot)=W(b_1,b_2)(x_\centerdot)\sum_{k=1}^{t_j}L^{f^j}(b_j,H^{f^j}(b_j\eta_k^j))\label{equation deriv W}
\end{equation}
which implies that 
\begin{equation}
\frac{\p}{\p b_j}\log Z_{m,n}(b_1,b_2)=E^{Q_{m,n}^{(b_1,b_2)}}[\sum_{k=1}^{t_j}L^{f^j}(b_j,H^{f^j}(b_j,\eta_k^j))].\label{equation deriv logZ}
\end{equation}
We now prove \eqref{equation cov explicit 1}. Similar to \eqref{equations coupling}, in the coupled environment we use $S_\bullet(b_1,b_2)$ to make explicit the dependence of $S_\bullet$ on the parameters $b_1$ and $b_2$. Recall that $\hat{\E}$ is the expectation of the coupled environment. For $\epsilon>0$ small enough such that $[a_1-\epsilon,a_1+\epsilon]\subset U_1$,
\begin{equation}
\begin{split}
\int_{a_1-\epsilon}^{a_1+\epsilon} \Cov^{(b_1,a_2)}(S_N,S_S) db_1&=\E^{(a_1+\epsilon,a_2)}[S_N]-\E^{(a_1-\epsilon,a_2)}[S_N]\\
&=\hat{\E}[S_N(a_1+\epsilon,a_2)-S_N(a_1-\epsilon,a_2)]\\
&=\hat{\E}[\int_{a_1-\epsilon}^{a_1+\epsilon}\frac{\p}{\p b_1}\log Z_{m,n}(b_1,a_2)db_1]\\
&=\int_{a_1-\epsilon}^{a_1+\epsilon}\hat{\E}[\frac{\p}{\p b_1}\log Z_{m,n}(b_1,a_2)]db_1
\end{split} \label{equation integral equality}
\end{equation}
where the first equality follows from \eqref{equation cov deriv 1}, the third equality follows because $S_W$ does not depend on $b_1$ and $S_N(b_1,a_2)=\log Z_{m,n}(b_1,a_2)-S_W(a_2)$. The last equality follows from \eqref{equation deriv logZ} and Tonelli's theorem (by the non-negativity of $L^{f^1}$).

Recall that $b_1\mapsto \Cov^{(b_1,a_2)}(S_N,S_S)$ is continuous. Once we show that the mapping 
\begin{equation}\label{equation cov mapping}
b_1\mapsto \hat{\E}[\frac{\p}{\p b_1}\log Z_{m,n}(b_1,a_2)] = E_{m,n}^{(b_1,a_2)}[\sum_{i=1}^{t_1}L^{f^1}(b_1,H^{f^1}(b_1, \eta^1_i))]
\end{equation}
is continuous, using \eqref{equation integral equality} and \eqref{equation coupled L sum} we will have \eqref{equation cov explicit 1}. The continuity of \eqref{equation cov mapping} follows from the continuity of $b_1\mapsto E^{Q_{m,n}^{(b_1,a_2)}} [\sum_{k=1}^{t_1} L^{f^1}(b_1,H^{f^1}(b_1,\eta^1_k))]$, the dominated convergence theorem, and the bound
\begin{align*}
&\ \hat{\E} \big[\sup_{|b_1-a_1|\leq\epsilon} E^{Q_{m,n}^{(b_1,a_2)}} [\sum_{k=1}^{t_1} L^{f^1}(b_1,H^{f^1}(b_1,\eta^1_k))]\big]\\
&\leq \hat{\E}[\sup_{|b_1-a_1|\leq\epsilon}\sum_{k=1}^{m} L^{f^1}(b_1,H^{f^1}(b_1,\eta^1_k))]\\
&\leq C\hat{\E}[\sum_{k=1}^m 1+ |\log H^{f^1}(a_1-\epsilon, \eta^1_k)| + |\log H^{f^1}(a_1 + \epsilon, \eta^1_k)|]<\infty
\end{align*}
where we use the non-negativity of $L^{f^1}$ to replace $t_1$ by its upper bound $m$, then use assumption \eqref{CSS 1} of Hypothesis \ref{hypothesis CSS} (with the fact that $H^{f^1}(b,x)$ is non-decreasing in $b$) and part (a) of Remark \ref{remark - Mellin consequences}.

A similar argument shows that 
\[
\Cov^{(a_1,a_2)}(S_E,S_W)=E_{m,n}^{(a_1,a_2)}[\sum_{j=1}^{t_2}L^{f^2}(a_2,R^2_{0,j})].
\]
This completes the proof.
\end{proof}
 
We can now give the proof of Proposition \ref{proposition variance formula}.

\begin{proof}[Proof of Proposition \ref{proposition variance formula}]
By assumption, the polymer environment is distributed as in \eqref{polymer environment distribution}, where $f^1$ and $f^2$ satisfy Hypothesis \ref{hypothesis CSS} by Remark \ref{remark f is CSS}. By Remark \ref{remark - Mellin consequences}, for each of the four models $\log u$ and $\log v$ have finite variance. Thus the conditions of Lemma \ref{lemma covariance explicit} are satisfied. Combining Proposition \ref{proposition 4 models have DR property} with Lemma \ref{lemma DR consequences}, and Lemma \ref{lemma covariance explicit} yields the result.
\end{proof}

\section{Proof of variance upper bound}\label{section variance UB}
The first lemma of this section allows us to compare the variance of the free energy at different parameter values.
\begin{lemma}\label{lemma variance comparison}
Assume that the polymer environment is distributed as in \eqref{polymer environment distribution}. Let $\epsilon$ be small enough such that for all $|\lambda|\leq\epsilon$, $a_1+\lambda\in D(M_{f^1})$ and $a_2-\lambda\in D(M_{f^2})$. Then there exists a positive constant $C$ depending only on $(a_1,a_2),\, \beta,$ and $\epsilon$ such that for all $(m,n)\in\N^2$, the following holds for all $|\lambda|\leq\epsilon$,
\begin{align*}
	\left|\Var^{(a_1,a_2)}[\log Z_{m,n}]- \Var^{(a_1+\lambda,a_2-\lambda)}[\log Z_{m,n}] \right| \leq C(m+n)|\lambda|
\end{align*}
\end{lemma}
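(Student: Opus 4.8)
The plan is to express the variance difference as an integral of a covariance along the parameter path $b \mapsto (a_1 + b, a_2 - b)$ for $b \in [0,\lambda]$ (say $\lambda \geq 0$; the case $\lambda < 0$ is symmetric), and then bound that covariance by $C(m+n)$ uniformly in $b$. Concretely, using the variance formula \eqref{equation var formula 1} from Proposition \ref{proposition variance formula} together with Lemma \ref{lemma covariance explicit}, write
\[
\Var^{(b_1,b_2)}[\log Z_{m,n}] = -m\,\psi_1^{f^1}(b_1) + n\,\psi_1^{f^2}(b_2) + 2\, E_{m,n}^{(b_1,b_2)}\Bigl[\sum_{i=1}^{t_1} L^{f^1}(b_1, H^{f^1}(b_1,\eta_i^1))\Bigr],
\]
where I have used Remark \ref{remark - Mellin consequences}(c) to identify $\Var[\log R^k] = \psi_1^{f^k}(b_k)$ and the coupling \eqref{equation coupled L sum} to rewrite the correction term. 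Evaluating this at $(b_1,b_2) = (a_1+\lambda, a_2-\lambda)$ and at $(a_1,a_2)$ and subtracting, the first two (deterministic) terms contribute at most $C(m+n)|\lambda|$ because $\psi_1^{f^j}$ is $C^\infty$ on $D(M_{f^j})$ (Remark \ref{remark - Mellin consequences}) and hence Lipschitz on the compact neighborhood $\{|\lambda|\leq\epsilon\}$ of $a_j$, with a Lipschitz constant depending only on $(a_1,a_2),\beta,\epsilon$.

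The remaining work is to bound the difference of the two correction terms $2 E_{m,n}^{(a_1+\lambda,a_2-\lambda)}[\,\cdot\,] - 2 E_{m,n}^{(a_1,a_2)}[\,\cdot\,]$ by $C(m+n)|\lambda|$. Here I would reuse the machinery in the proof of Lemma \ref{lemma covariance explicit}: writing $\Phi(b_1,b_2) := E_{m,n}^{(b_1,b_2)}[\sum_{i=1}^{t_1} L^{f^1}(b_1,H^{f^1}(b_1,\eta_i^1))]$, I want to show that $g(\lambda) := \Phi(a_1+\lambda, a_2-\lambda)$ is Lipschitz in $\lambda$ with constant $C(m+n)$. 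By \eqref{equation deriv logZ}, $\Phi(b_1,b_2) = \hat\E[\frac{\p}{\p b_1}\log Z_{m,n}(b_1,b_2)]$, so I can differentiate $\log Z_{m,n}(b_1,b_2)$ in each parameter directly using the product-of-weights structure \eqref{equation coupled weight decomp} and \eqref{cov calc 2}: the $\p/\p b_1$-derivative of the summand involves $\frac{\p}{\p b_1}L^{f^1}(b_1, H^{f^1}(b_1,\eta_i^1))$ (whose integral against $d p$ is $\leq C$ by \eqref{CSS 2}) plus quenched-covariance cross-terms of the form $\Cov^{Q}(\sum_{i\leq t_1} L^{f^1}, \sum_{k} L^{f^j})$ coming from differentiating the Gibbs measure. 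Each such quenched covariance is controlled by the product of $L^1$-norms of the two sums, and since $t_1 \leq m$ and $t_2 \leq n$ and each $L^{f^j}(b_j, H^{f^j}(b_j,\eta))$ has finite expectation uniformly for $b_j$ near $a_j$ (by \eqref{CSS 1}, Remark \ref{remark - Mellin consequences}(a), and monotonicity of $H^{f^j}$ in the parameter, exactly as in the displayed bound at the end of the proof of Lemma \ref{lemma covariance explicit}), each cross-term is $O((m+n)^2)$ — but I only need it to be integrable in $\lambda$, and after taking $\hat\E$ the dangerous quadratic terms should reorganize.

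The cleaner route, which I expect to be the actual argument, is: integrate the identity $g'(\lambda) = \hat\E[(\p_{b_1} - \p_{b_2})\,\p_{b_1}\log Z_{m,n}]$ once more over $\lambda$ and recognize, via \eqref{equation deriv logZ} applied with both parameters, that
\[
\frac{\p}{\p\lambda}\Bigl(\text{correction}(a_1+\lambda,a_2-\lambda)\Bigr)
= E_{m,n}^{(a_1+\lambda,a_2-\lambda)}\Bigl[\sum_{i=1}^{t_1}\frac{\p L^{f^1}}{\p b_1}\Bigr]
- E_{m,n}^{(a_1+\lambda,a_2-\lambda)}\Bigl[\sum_{j=1}^{t_2}\frac{\p L^{f^2}}{\p b_2}\Bigr]
\;+\; (\text{Gibbs cross-terms}),
\]
where, crucially, the Gibbs cross-terms come from $\p_{b_1}\bigl(E^{Q}[\sum_{j\leq t_2}L^{f^2}]\bigr)$ and $\p_{b_2}\bigl(E^{Q}[\sum_{i\leq t_1}L^{f^1}]\bigr)$ and take the form of genuine quenched covariances $\hat\E[\Cov^{Q}(\sum_{i\leq t_1}L^{f^1},\sum_{j\leq t_2}L^{f^2})]$; since $L^{f^1}$-sums depend only on the horizontal boundary and $L^{f^2}$-sums only on the vertical boundary, and differentiating one family's parameter only tilts the Gibbs measure, one shows these cross-terms telescope against each other so that the net $\p/\p\lambda$ is bounded in absolute value by $\hat\E[\sum_{i\leq m}|\p_{b_1}L^{f^1}| + \sum_{j\leq n}|\p_{b_2}L^{f^2}|] \leq C(m+n)$ by \eqref{CSS 2}. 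The main obstacle is precisely this bookkeeping of the Gibbs-measure cross-terms: one must verify they either vanish or cancel, rather than contributing an $O((m+n)^2)$ term; this is where the "$-\lambda$ in the second coordinate" is essential, mirroring the cancellation $\Var = S_N + S_W$, $\log Z = S_E + S_S$ that drives Lemma \ref{lemma DR consequences}. Once that cancellation is in hand, integrating $|g'(\lambda)| \leq C(m+n)$ over $[0,\lambda]$ gives the claim, with $C$ depending only on $(a_1,a_2),\beta,\epsilon$ through the compact-set constant in Hypothesis \ref{hypothesis CSS}.
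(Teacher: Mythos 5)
Your proposal correctly identifies the easy parts (the $\psi_1^{f^j}$ differences and the "direct derivative of $L^{f^j}$" term controlled by \eqref{CSS 2}), but the crux — controlling the Gibbs-measure cross-terms — is precisely where you stop short, and the mechanism you sketch does not close the gap. If you differentiate $E_{m,n}^{(b_1,b_2)}\bigl[\sum_{i\le t_1}L^{f^1}(b_1,\cdot)\bigr]$ along $b_1=a_1+\lambda$, $b_2=a_2-\lambda$, the $\p_{b_1}$ tilt produces $\hat\E\bigl[\Var^{Q}\bigl(\sum_{i\le t_1}L^{f^1}\bigr)\bigr]$, a quenched variance that is a priori $O(m^2)$. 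If instead you average the two variance formulas (so the deterministic terms drop out) and then differentiate, the genuine cross-covariances $\Cov^Q\bigl(\sum_{i\le t_1}L^{f^1},\sum_{j\le t_2}L^{f^2}\bigr)$ do cancel in pairs as you hope, but what remains is $\hat\E\bigl[\Var^{Q}(\sum_{i\le t_1}L^{f^1})\bigr]-\hat\E\bigl[\Var^{Q}(\sum_{j\le t_2}L^{f^2})\bigr]$, and neither term nor their difference is obviously $O(m+n)$. So the telescoping you invoke handles the wrong terms, and the Lipschitz bound $|g'(\lambda)|\le C(m+n)$ is not established.

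The paper avoids this entirely by \emph{not} trying to bound a derivative in absolute value. It writes $\tfrac12\bigl(\Var^{(\tilde a_1,\tilde a_2)}-\Var^{(a_1,a_2)}\bigr)$ two ways — once using the $t_1$-formula ($j=1$), once using the $t_2$-formula ($j=2$) — and in each splits the $L$-correction difference into (i) same Gibbs measure, different $L^{f^j}$ parameter (bounded by $(m\vee n)C|\lambda|$ via \eqref{CSS 2}) and (ii) same $L^{f^j}$ parameter, different Gibbs measure. The key ingredient you are missing is Lemma~\ref{lemma Q(t_j>k)}: $Q^{(a_1+\lambda,a_2-\lambda)}(t_1\ge k)$ is stochastically non-decreasing in $\lambda$ and $Q^{(a_1+\lambda,a_2-\lambda)}(t_2\ge k)$ is non-increasing. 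Combined with the non-negativity of $L^{f^j}$, this gives piece (ii) a definite \emph{sign} depending on $j$ and the sign of $\lambda$, so for $\lambda>0$ the $j=1$ decomposition yields $\Var^{(\tilde a)}-\Var^{(a)}\ge -C(m+n)|\lambda|$ and the $j=2$ decomposition yields $\Var^{(\tilde a)}-\Var^{(a)}\le C(m+n)|\lambda|$ (and symmetrically for $\lambda<0$), with no need to control the magnitude of the tilt term at all. You should look for this monotonicity argument rather than trying to cancel quenched variances.
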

\begin{proof}
Let $\t{a}_1 = a_1 + \lambda$ and $\t{a}_2 = a_2 - \lambda$. Applying Proposition \ref{proposition variance formula} (recalling that $\psi_1^{f}(a) = \Var[\log X]$ when $X\sim m_f(a)$) then using the coupling \eqref{equations coupling} yields, for $j=1,2$:
  \begin{align}
    &\frac{1}{2} \left( \Var^{(\ta_1,\ta_2)}[\log Z_{m,n}]-\Var^{(a_1,a_2)}[\log Z_{m,n}] \right) \label{vl-Vdiff}\\
    &= \frac{(-1)^j}{2} \left[m\big(\psi_1^{f^1}(\ta_1) - \psi_1^{f^1}(a_1)\big) -n
    \big(\psi_1^{f^2}(\ta_2) - \psi_1^{f^2}(a_2)\big)\right] \label{vl-psi-diff}\\
    &\quad  + E_{m,n}^{(\ta_1,\ta_2)}\left[\sum_{k=1}^{t_j}L^{f^j}(\ta_j,H^{f^j}(\ta_j, \eta^j_k))\right] - E_{m,n}^{(a_1,
      a_2)}\left[\sum_{k=1}^{t_j}L^{f^j}(a_j,H^{f^j}(a_j, \eta^j_k))\right].\label{vl-L-diff}
  \end{align}
  Since $\psi_1^{f^1}$ and $\psi_1^{f^2}$ are continuously
  differentiable, there is a constant $C_1$ such that line
  \eqref{vl-psi-diff} is bounded by $C_1(m+n)|\lambda|$.
  Suppressing the $m,n$ dependence, we then split line
  \eqref{vl-L-diff} as
  \begin{align}
    &=\hat{\E}
    E^{Q^{(\ta_1, \ta_2)}}\left[\sum_{k=1}^{t_j}L^{f^j}(\ta_j,H^{f^j}(\ta_j,\eta_k^j))\right]
    - \hat{\E}
    E^{Q^{(\ta_1,\ta_2)}}\left[\sum_{k=1}^{t_j}L^{f^j}(a_j,H^{f^j}(a_j,\eta_k^j))\right]
    \label{vl-L-diff1}\\
    &\quad + \hat{\E}
    E^{Q^{(\ta_1,\ta_2)}}\left[\sum_{k=1}^{t_j}L^{f^j}(a_j,H^{f^j}(a_j,\eta_k^j))\right]
    -
    \hat{\E} E^{Q^{(a_1,a_2)}}\left[\sum_{k=1}^{t_j}L^{f^j}(a_j,H^{f^j}(a_j,\eta_k^j))\right].\label{vl-L-diff2}
\end{align}
For line \eqref{vl-L-diff1}, since $t_j$ is all that is random under $E^{Q^{(\ta_1, \ta_2)}}$, we can replace $t_j$ by $m\vee n$. Thus
\begin{align}
|\, \text{line }\eqref{vl-L-diff1}| &\leq \Eh\sum_{k=1}^{m\vee n} \left| L^{f^j}(\ta_j,H^{f^j}(\ta_j,\eta^j_k)) - L^{f^j}(a_j,H^{f^j}(a_j,\eta^j_k))\right|\nonumber \\
	&= (m\vee n) \int_0^1 \left| L^{f^j}(\ta_j,H^{f^j}(\ta_j,\eta)) - L^{f^j}(a_j,H^{f^j}(a_j,\eta))\right| d\eta \nonumber \\
    &= (m\vee n) \int_0^1 \left| \int^{\ta_j}_{a_j} \frac{\p }{\p a}L^{f^j}(a,H^{f^j}(a,\eta))  da \right|d\eta \nonumber \\
    &\leq  (m\vee n) \left|\int^{\ta_j}_{a_j} \int_0^1 \left|\frac{\p }{\p a}L^{f^j}(a,H^{f^j}(a,\eta)) \right| d\eta da\right| \nonumber \\
    &\leq (m\vee n) C_2 |\lambda|.\label{vl-est1}
\end{align}
  In the last step we used the fact that $f^j$ satisfy assumption \eqref{CSS 2} in Hypothesis \ref{hypothesis CSS} by Remark \ref{remark f is CSS}. 

We can write line \eqref{vl-L-diff2} as
\begin{align*}
\hat{\E}\big[\sum_{k=1}^{\ell_j} L^{f^j}(a_j,H^{f^j}(a_j,\eta^j_k))\big(Q^{(\ta_1, \ta_2)}(t_j\geq k)-Q^{(a_1, a_2)}(t_j\geq k) \big)\big],
\end{align*}
where $\ell_1 = m$ and $\ell_2 = n$. By Lemma \ref{lemma Q(t_j>k)}, $Q^{(a_1+\lambda, a_2-\lambda)}(t_1\geq k)$ is stochastically non-decreasing in $\lambda$, and $Q^{(a_1+\lambda, a_2-\lambda)}(t_2\geq k)$ is stochastically non-increasing in $\lambda$. Using the bound on \eqref{vl-psi-diff}, the bound \eqref{vl-est1}, and the non-negativity of $L^{f^j}$, line \eqref{vl-L-diff2} is non-negative if $j=1$ and $\lambda>0$ or $j=2$ and $\lambda<0$. This implies 
\[\eqref{vl-Vdiff} \geq -C(m+n)|\lambda|.\]
If $j=2$ and $\lambda>0$ or $j=1$ and $\lambda<0$, then \eqref{vl-L-diff2} is non-positive, so
\[\eqref{vl-Vdiff} \leq C(m+n)|\lambda|.\]
This completes the proof.
\end{proof}

\begin{lemma}\label{E[t] lemma}
Assume that the polymer environment is distributed as in \eqref{polymer environment distribution}. Then there exists a positive constant $C$ depending on $(a_1,a_2)$ and $\beta$ such that for all $(m,n)\in\Z_+^2$ the following two inequalities hold:
\begin{align*}
E_{m,n}[\sum_{i=1}^{t_1}L_{R^1}(R^1_{i,0})]&\leq C(E_{m,n}[t_1]+1), &
E_{m,n}[\sum_{j=1}^{t_2}L_{R^2}(R^2_{0,j})]&\leq C(E_{m,n}[t_2]+1).
\end{align*}
\end{lemma}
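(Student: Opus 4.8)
\textbf{Proof plan for Lemma \ref{E[t] lemma}.}

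The plan is to control the sum $\sum_{i=1}^{t_1} L_{R^1}(R^1_{i,0})$ by separating the influence of the random number of terms $t_1$ from the size of each individual term $L_{R^1}(R^1_{i,0})$. The natural first step is to observe, using the coupling \eqref{def coupled environment} as in \eqref{equation coupled L sum}, that
\[
E_{m,n}\Bigl[\sum_{i=1}^{t_1}L_{R^1}(R^1_{i,0})\Bigr] = E_{m,n}^{(a_1,a_2)}\Bigl[\sum_{i=1}^{t_1}L^{f^1}\bigl(a_1,H^{f^1}(a_1,\eta_i^1)\bigr)\Bigr],
\]
so that the quantities $\xi_i := L^{f^1}(a_1,H^{f^1}(a_1,\eta_i^1))$ for $i\in\N$ form an i.i.d.\ nonnegative sequence (depending only on the uniform boundary randomness) that is independent of the bulk weights. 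By \eqref{CSS 1} of Hypothesis \ref{hypothesis CSS} (valid for $f^1$ by Remark \ref{remark f is CSS}) together with part (a) of Remark \ref{remark - Mellin consequences}, each $\xi_i$ has finite expectation, say $\E[\xi_1] = c_1 < \infty$; moreover $\xi_i \geq 0$ since $L^{f^1}\geq 0$.

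The key step is then to bound $\Eh\, E^{Q_{m,n}^{(a_1,a_2)}}\bigl[\sum_{i=1}^{t_1}\xi_i\bigr]$. The difficulty is that $t_1$ and the $\xi_i$ are \emph{not} independent: the quenched measure $Q_{m,n}$ favours paths with a long horizontal boundary stretch precisely when the boundary weights $\omega^{(a_1,a_2)}_{(i-1,0),(i,0)} = H^{f^1}(a_1,\eta^1_i)$ there are large, and $\xi_i$ is a function of the same $\eta_i^1$. To handle this I would condition on the boundary uniforms $\{\eta_i^1\}$ (and the bulk weights), reducing the problem to: given the realized sequence $\xi_1,\xi_2,\dots$, bound $E^{Q_{m,n}}\bigl[\sum_{i=1}^{t_1}\xi_i\bigr]$. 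Write $\sum_{i=1}^{t_1}\xi_i = \sum_{k\geq 1}\xi_k\,\ind_{\{t_1\geq k\}}$, so that the quenched expectation is $\sum_{k\geq 1}\xi_k\, Q_{m,n}(t_1\geq k)$. The monotonicity/stochastic-domination structure for $Q_{m,n}(t_1\geq k)$ — of the type already invoked via Lemma \ref{lemma Q(t_j>k)} in the proof of Lemma \ref{lemma variance comparison} — should give that increasing any single boundary weight only increases the probability that $t_1$ exceeds a given value, which after conditioning lets one replace the dependent weighting by a bound that decouples $\xi_k$ from the event $\{t_1\geq k\}$; alternatively one bounds $\xi_k \leq c_1 + \overline{\xi_k}$ (with $\overline{\xi_k} = \xi_k - c_1$) and estimates the contribution of the centered part using independence from the bulk together with a crude deterministic bound $\sum_k Q_{m,n}(t_1\geq k) = E^{Q_{m,n}}[t_1] \leq m$. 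This produces a term $c_1 E_{m,n}[t_1]$ plus an error term controlled by $\E|\overline{\xi_1}|$ and the $+1$.

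Concretely, the cleanest route is: by independence of $\{\xi_i\}$ from the bulk and the FKG-type positive association of $\{t_1\geq k\}$ with large boundary weights, bound
\[
\Eh\, E^{Q_{m,n}^{(a_1,a_2)}}\Bigl[\sum_{k\geq 1}\xi_k\,\ind_{\{t_1\geq k\}}\Bigr] \leq \E[\xi_1]\, E_{m,n}[t_1] + (\text{correction}),
\]
where the correction is shown to be $O(1)$ using that $L^{f^1}(a_1,H^{f^1}(a_1,\eta_1^1))$ has finite \emph{exponential} moments (Remark \ref{remark f is CSS}), so that the extreme-value contribution from the at most $m$ relevant boundary sites, reweighted by a quenched measure, stays bounded — or more simply, by noting that the correlation between $\xi_k$ and $\ind_{\{t_1\geq k\}}$ can be absorbed into a constant times $\P(t_1\geq k)$ summed over $k$, which is again $O(E_{m,n}[t_1]+1)$. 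Taking $C = c_1 + (\text{the constant from the correction})$, depending only on $(a_1,a_2)$ and $\beta$ through $f^1$ and the bulk law, gives the first inequality; the second follows by the symmetric argument with $f^2$, $\eta^2$, $t_2$. \textbf{The main obstacle} is precisely making rigorous the decoupling of $\xi_k$ from $\{t_1 \geq k\}$ under the quenched measure: one must exploit either an explicit monotone coupling of $Q_{m,n}$ in the boundary weights or the exponential integrability of $L^{f^1}(a_1,H^{f^1}(a_1,\cdot))$ to show the dependence contributes only a bounded additive error rather than something growing with $m$.
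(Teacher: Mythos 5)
Your decomposition $\xi_i = \E[\xi_1] + \overline{\xi}_i$ and the observation that the mean part yields exactly $\E[\xi_1]\,E_{m,n}[t_1]$ match the paper's first move, and you correctly identify the crux (the centered part). But the mechanisms you propose for closing the argument do not work, and you acknowledge this yourself. The FKG suggestion in fact points the wrong way: $\xi_k$ and $Q_{m,n}(t_1\geq k)$ are \emph{positively} correlated under $\hat\E$ (a large boundary weight at site $k$ both inflates $\xi_k$ and encourages the path to run further along the axis), so positive association gives $\hat\E[\xi_k\, Q_{m,n}(t_1\geq k)]\geq \E[\xi_1]\,P_{m,n}(t_1\geq k)$ — a \emph{lower} bound, useless for the desired upper bound. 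The "crude deterministic bound $\leq m$" for the centered contribution produces a term of order $m$, which is not $O(1)$. And the claim that the correlation "can be absorbed into a constant times $\P(t_1\geq k)$" is precisely the assertion that needs proof, not a step.

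The missing idea is to group the centered terms into the partial-sum random walk $S_k:=\sum_{i=1}^k\overline{L}_i$ and condition on the \emph{exact} exit point $t_1=k$ rather than trying to decouple $\overline{\xi}_k$ from $\{t_1\geq k\}$ term by term. One writes
\[
E_{m,n}\Bigl[\sum_{i=1}^{t_1}\overline{L}_i\Bigr]=\sum_{k=1}^m\E\bigl[Q_{m,n}(t_1=k)\,S_k\bigr],
\]
and then splits on the event $\{S_k\leq k\}$. On that event one bounds $Q_{m,n}(t_1=k)\,S_k\leq k\,Q_{m,n}(t_1=k)$, and the sum over $k$ is exactly $E_{m,n}[t_1]$. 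On the complementary event $\{S_k>k\}$ one simply discards the quenched weight (bounded by $1$) and bounds $\E[\ind_{\{S_k>k\}}S_k]$ by Cauchy–Schwarz combined with a high moment of $S_k$ (e.g.\ $\E[S_k^8]\leq Ck^4$, valid because $L_i$ has finite exponential moments by Remark \ref{remark f is CSS}), giving $\E[\ind_{\{S_k>k\}}S_k]\leq Ck^{-3/2}$, which is summable in $k$. This is what produces the additive constant and the factor $E_{m,n}[t_1]$. In short, the decoupling you flag as the main obstacle is never achieved — the paper sidesteps it entirely by bounding $S_k$ against $k$ pathwise and controlling the rare excess with moment estimates on the random walk $S_k$.
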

\begin{proof}
Let $L_i=L_{R^1}(R^1_{i,0})$, $\overline{L}_i =L_i-\E[L_i]$, and $S_k= \sum_{i=1}^k \overline{L}_i$. Note that $L_i\sim L_{R^1}(R^1)$ has finite exponential moments by Remark \ref{remark f is CSS}. We first estimate
\begin{align*}
\E\left[\ind_{\{S_k>k\}}S_k\right] \leq \left( \mbb{P}\{S_k>k\}\right)^{1/2}(k\Var L_1)^{1/2} \leq \left( \frac{\E[S_k^8]}{k^8}\right)^{1/2}(kC)^{1/2} \leq Ck^{-3/2}.
\end{align*}
Thus \[\sum_{k=1}^\infty \E\left[\ind_{\{S_k>k\}}S_k\right]\leq C.\] Using this, we then get
\begin{align*}
E_{m,n}\left[\sum_{i=1}^{t_1} L_{R^1}(R^1_{i,0}) \right] &= E_{m,n}\left[\sum_{i=1}^{t_1} \overline{L}_i + \E L_i \right]\\
&= E_{m,n}[t_1] \E[L_1] + E_{m,n}\left[\sum_{i=1}^{t_1} \overline{L}_i\right] \\
&=E_{m,n}[t_1] \E[L_1] + \sum_{k=1}^m \E\left[ Q_{m,n}(t_1=k) S_k \right]\\
&=E_{m,n}[t_1] \E[L_1] + \sum_{k=1}^m \E\left[ \ind_{\{S_k\leq k\}}Q_{m,n}(t_1=k) S_k + \ind_{\{S_k>k\}} Q_{m,n}(t_1=k)S_k\right]\\
&\leq E_{m,n}[t_1] \E[L_1] + \sum_{k=1}^m k\E\left[  Q_{m,n}(t_1=k)\right] +\sum_{k=1}^m \E\left[ \ind_{\{S_k>k\}} S_k \right]\\
&\leq E_{m,n}[t_1] \E[L_1] + E_{m,n}[t_1] + C\\
&\leq C\left(E_{m,n}[t_1]+1\right).
\end{align*}
The proof for $t_2$ is analogous. 
\end{proof}

\begin{proposition}\label{proposition stationary ub}
Assume that the polymer environment is distributed as in \eqref{polymer environment distribution}. Assume that the sequence $(m,n)=(m_N,n_N)_{N=1}^{\infty}$ satisfies 
\[
|m-N\psi_1^{f^2}(a_2)|\vee|n-N\psi_{1}^{f^1}(a_1)|\leq \kappa_N
\]
where $\kappa_N\leq \gamma N^{2/3}$ and $\gamma$ is some positive constant.

Then there exist positive finite constants $C_1,C_2,C_3, \delta,\delta_1$ depending only on $(a_1,a_2),\, \beta,$ and $\gamma$ such that for $N\in\N$ and $ 1\vee C_1 \kappa_N\leq u\leq \delta N$,
\begin{align}
\mbb{P}\big\{Q_{m,n}(t_j\geq u)\geq  e^{\frac{-\delta u^2}{N}}\big\}\leq C_2 \Big(\frac{N^2}{u^4}E_{m,n}[t_j]+\frac{N^2}{u^3}\Big)\nonumber
\end{align}
while for $ N\in\N$ and $1\vee C_1 \kappa_N\vee\delta N\leq u$,
\begin{align}
\mbb{P}\big\{Q_{m,n}(t_j\geq u)\geq e^{-\delta_1 u}\big\}\leq 2 e^{-C_3u}\nonumber
\end{align}
hold simultaneously for both $j=1,2$.
\end{proposition}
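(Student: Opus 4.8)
The plan is to follow the exit-point argument of \cite{S2012}; I describe the case $j=1$, the case $j=2$ being obtained by exchanging the two coordinate axes throughout. The basic mechanism is to compare $\log Z_{m,n}$ at the stationary parameter $(a_1,a_2)$ with its value at the nearby \emph{stationary} parameter $(a_1+\lambda,a_2-\lambda)$, for a perturbation $\lambda>0$ to be tuned (of order $u/N$ when $u\le\delta N$, so that $N\lambda^2\asymp u^2/N$ matches the exponent in $e^{-\delta u^2/N}$, and of a fixed small size when $u\ge\delta N$). Using the coupled environment and \eqref{equation deriv logZ}, \eqref{equation partial H},
\[
\log Z_{m,n}(a_1+\lambda,a_2-\lambda)-\log Z_{m,n}(a_1,a_2)=\int_0^\lambda A_{m,n}(s)\,ds-\int_0^\lambda B_{m,n}(s)\,ds,
\]
where $A_{m,n}(s):=E^{Q_{m,n}^{(a_1+s,a_2-s)}}\big[\sum_{k\le t_1}L^{f^1}(a_1+s,H^{f^1}(a_1+s,\eta^1_k))\big]\ge0$ and $B_{m,n}(s)$ is the analogous non-negative quantity built from $t_2$, $f^2$ and $a_2-s$; the point of moving along the stationary line is that both endpoints then have the down-right property, so Proposition \ref{proposition variance formula} applies at each of them.

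The heart of the matter is a lower bound on the event $\mathcal A_u:=\{Q_{m,n}(t_1\ge u)\ge e^{-\delta u^2/N}\}$. By the monotonicity of $s\mapsto Q_{m,n}^{(a_1+s,a_2-s)}(t_1\ge u)$ in the coupled environment (Lemma \ref{lemma Q(t_j>k)}) this quenched probability stays $\ge e^{-\delta u^2/N}$ throughout $[0,\lambda]$ on $\mathcal A_u$; replacing $t_1$ by $u$ inside $A_{m,n}(s)$ (legitimate since $L^{f^1}\ge0$) and integrating with \eqref{equation partial H} yields, on $\mathcal A_u$,
\[
\int_0^\lambda A_{m,n}(s)\,ds\ \ge\ e^{-\delta u^2/N}\sum_{k=1}^{u}\big(\log H^{f^1}(a_1+\lambda,\eta^1_k)-\log H^{f^1}(a_1,\eta^1_k)\big).
\]
The right-hand side is $e^{-\delta u^2/N}$ times a sum of $u$ i.i.d.\ non-negative random variables of mean $\psi_0^{f^1}(a_1+\lambda)-\psi_0^{f^1}(a_1)\asymp\lambda$ with finite exponential moments (by \eqref{CSS 1} and Remark \ref{remark - Mellin consequences}), hence it concentrates around $e^{-\delta u^2/N}u\lambda$ off an event whose probability decays in $u$.

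To close the estimate one compares this with quantities whose distribution is controlled. Since both endpoints satisfy the down-right property, Proposition \ref{proposition variance formula} gives the exact mean of $\log Z_{m,n}(a_1+\lambda,a_2-\lambda)-\log Z_{m,n}(a_1,a_2)$, whose Taylor expansion in $\lambda$ has linear coefficient $m\psi_1^{f^1}(a_1)-n\psi_1^{f^2}(a_2)=O(\kappa_N)$ by the direction hypothesis and quadratic coefficient $O(N)$, and Lemma \ref{lemma variance comparison} bounds the difference of the two endpoint variances; the remaining term $\int_0^\lambda B_{m,n}(s)\,ds$ has mean $\le C\lambda\big(E_{m,n}[t_2]+1\big)$ by Lemma \ref{E[t] lemma} together with the monotonicity $E_{m,n}^{(a_1+s,a_2-s)}[t_2]\le E_{m,n}[t_2]$ — this is the source of the factor $E_{m,n}[t_j]$ in the statement. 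A Chebyshev inequality with a sufficiently high moment then converts $\P(\mathcal A_u)$ into the polynomial bound $C_2\big(N^2u^{-4}E_{m,n}[t_1]+N^2u^{-3}\big)$ in the regime $C_1\kappa_N\le u\le\delta N$ (the restriction $u\ge C_1\kappa_N$ ensuring the linear-in-$\lambda$ part of the drift is dominated by the rest), while an exponential Markov bound, available because all the relevant variables have finite exponential moments, gives $2e^{-C_3u}$ in the regime $u\ge\delta N$; making all constants uniform over the two $u$-regimes and over compacta of $(a_1,a_2),\beta,\gamma$ is routine bookkeeping.

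The hard part is the circularity: at this stage the only a priori inputs on $\Var[\log Z_{m,n}]$ and $E_{m,n}[t_j]$ are the trivial $O(N)$ bounds, which on their own are too weak to make the lower bound on $\mathcal A_u$ genuinely beat the typical fluctuations. This is precisely why the proposition is stated with $E_{m,n}[t_j]$ still on the right-hand side: it is meant to be combined, in Section \ref{section variance UB}, with Proposition \ref{proposition variance formula} and Lemma \ref{E[t] lemma} into a self-improving inequality for $E_{m,n}[t_j]$ (the $N^2u^{-4}E_{m,n}[t_j]$ contribution, once summed over $u\ge C_1\kappa_N$, is absorbed into the left side provided $C_1$ is large). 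Making this bootstrap airtight — which dictates the order in which the constants $C_1,\delta,\delta_1,C_3,C_2$ must be chosen, and requires keeping the two $u$-regimes separate throughout — is the main obstacle, and is handled following the strategy of \cite{S2012}.
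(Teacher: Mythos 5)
Your high-level strategy---perturb along the stationary line to $(a_1+\lambda,a_2-\lambda)$ with $\lambda\sim u/N$, exploit the exact mean formula of Proposition \ref{proposition variance formula} at both endpoints, use Lemma \ref{lemma variance comparison} for the variance difference, and split into the two $u$-regimes---matches the paper's. But your central step has a fatal flaw. You lower-bound $\int_0^\lambda A_{m,n}(s)\,ds$ on $\mathcal A_u=\{Q_{m,n}(t_1\ge u)\ge e^{-\delta u^2/N}\}$ by replacing the indicator $\ind_{\{t_1\geq u\}}$ inside the quenched expectation by the quenched probability $Q^{(a_1+s,a_2-s)}(t_1\ge u)\ge e^{-\delta u^2/N}$, which produces
\[
\int_0^\lambda A_{m,n}(s)\,ds \;\ge\; e^{-\delta u^2/N}\sum_{k=1}^u\big(\log H^{f^1}(a_1+\lambda,\eta^1_k)-\log H^{f^1}(a_1,\eta^1_k)\big).
\]
The prefactor $e^{-\delta u^2/N}$ makes this useless: at $u\asymp N^{2/3}$ it is $e^{-\delta N^{1/3}}$, whereas the typical size of $\int_0^\lambda A_{m,n}$ is $\gtrsim u^2/N$ (it contains the second-order Taylor term of the mean, of order $N\lambda^2$). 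The inclusion $\mathcal A_u\subset\{\int_0^\lambda A\ge e^{-\delta u^2/N}S_u\}$ therefore has a right-hand side of probability close to one, and no Chebyshev inequality of any order extracts the claimed $N^2u^{-3}$ bound; the exponential regime suffers the same defect with $e^{-\delta_1 u}$.

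The paper avoids this by never lower-bounding the integral on $\mathcal A_u$. Instead it factors the path weight pointwise in the coupled environment: for any path with $t_1(x_\centerdot)\ge u$ and $\lambda>0$,
\[
\frac{W(a_1,a_2)(x_\centerdot)}{W(a_1+\lambda,a_2-\lambda)(x_\centerdot)}
=\prod_{k=1}^{t_1}\frac{H^{f^1}(a_1,\eta^1_k)}{H^{f^1}(a_1+\lambda,\eta^1_k)}
\le \prod_{k=1}^{\floor u}\frac{H^{f^1}(a_1,\eta^1_k)}{H^{f^1}(a_1+\lambda,\eta^1_k)},
\]
each factor being $\le 1$; summing over such paths yields $Q^{(a_1,a_2)}_{m,n}(t_1\ge u)\le \frac{Z_{m,n}(a_1+\lambda,a_2-\lambda)}{Z_{m,n}(a_1,a_2)}\prod_{k\le u}\frac{H^{f^1}(a_1,\cdot)}{H^{f^1}(a_1+\lambda,\cdot)}$, and then a union bound $\P\{Q\ge e^{-z}\}\le\hat\P\{\prod\ge e^{-r}\}+\hat\P\{Z\text{-ratio}\ge e^{r-z}\}$. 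Equivalently, taking logs gives the deterministic inequality $\log Z(\tilde a)-\log Z(a)\ge S_u+\log Q(t_1\ge u)$, so the $\log Q$ term enters \emph{additively} ($\ge -z$ on $\mathcal A_u$) rather than as a multiplicative factor $Q\approx e^{-z}$, which is exactly what your bound loses. A secondary point: attributing $E_{m,n}[t_j]$ to the mean of $\int_0^\lambda B$ would produce $E_{m,n}[t_2]$ in the case $j=1$; in the paper it comes from bounding $\hat\Var[\log Z_{m,n}(a_1,a_2)]\le C(E_{m,n}[t_j]+1)$ via Proposition \ref{proposition variance formula} and Lemma \ref{E[t] lemma} after applying Lemma \ref{lemma variance comparison}.
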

\begin{proof}
Let $\epsilon>0$ be small enough such that for all $|\lambda|\leq \epsilon$, $a_1(\lambda):=a_1+\lambda\in D(M_{f^1})$ and $a_2(\lambda):=a_2-\lambda\in D(M_{f^2})$.  For the moment fix $\lambda_1\in [0,\epsilon]$, $\lambda_2\in [-\epsilon,0]$, and $u\geq 1$.  The $\lambda_j$ will give the perturbation $(a_1(\lambda_j),a_2(\lambda_j))$ of the parameters $(a_1,a_2)$ which will be used when dealing with the exit time $t_j$.    Using the coupling in \eqref{equations coupling}, \eqref{equation coupled weight decomp} gives: for both $j=1,2$ and any path $x_{\centerdot}$ such that $t_j(x_{\centerdot})\geq u$,
\begin{align*}
\frac{W(a_1,a_2)(x_{\centerdot})}{W(a_1(\lambda_j),a_2(\lambda_j))(x_{\centerdot})}=\prod_{k=1}^{t_j}\frac{H^{f^j}(a_j,\eta_k^j)}{H^{f^j}(a_j(\lambda_j),\eta_k^j)}\leq \prod_{k=1}^{\floor{u}}\frac{H^{f^j}(a_j,\eta_k^j)}{H^{f^j}(a_j(\lambda_j),\eta_k^j)}
\end{align*}
since $H^{f}(a,x)$ is non-decreasing in $a$.
Therefore
\begin{align*}
Q^{(a_1,a_2)}_{m,n}(t_j\geq u)=&\frac{1}{Z_{m,n}(a_1,a_2)}\sum_{x_\centerdot \in \Pi_{m,n}}\ind_{\{x_\centerdot\geq u\}}W(a_1,a_2)(x_\centerdot)\\
\leq &\frac{Z_{m,n}(a_1(\lambda_j),a_2(\lambda_j))}{Z_{m,n}(a_1,a_2)}\prod_{k=1}^{\floor{u}}\frac{H^{f^j}(a_j,\eta_k^j)}{H^{f^j}(a_j(\lambda_j),\eta_k^j)}. 
\end{align*}
Then for all real numbers $z,r$
\begin{align}
\P\Big\{Q_{m,n}(t_j\geq u)\geq e^{-z}\Big\}\leq &  \hat{\P}\Big\{
\prod_{k=1}^{\floor{u}}\frac{H^{f^j}(a_j,\eta_k^j)}{H^{f^j}(a_j(\lambda_j),\eta_k^j)}\geq e^{-r}\Big\}\label{equation first ub prob}\\
&+\hat{\P}\Big\{ \frac{Z_{m,n}(a_1(\lambda_j),a_2(\lambda_j))}{Z_{m,n}(a_1,a_2)}\geq e^{r-z}\Big\}.\label{equation second ub prob}
\end{align}
We now split the proof into two cases.

\textbf{Case 1:}  $1\vee C_1 \kappa_N \leq u\leq \delta N$.  Let $b,\delta >0$ be small enough such that $b\delta\leq \epsilon$.  These constants will be determined through the course of the proof.  Put $\lambda_1=\frac{bu}{N}$ and $\lambda_2=-\frac{bu}{N}$.  The condition $u\leq \delta N$ guarantees that $-\epsilon\leq \lambda_2<0<\lambda_1\leq \epsilon$.  Now plug in $r=\floor{u}\Big(\psi_0^{f^j}(a_j(\lambda_j))-\psi_0^{f^j}(a_j)\Big)-\frac{\delta u^2}{N}$ and $z=\frac{\delta u^2}{N}$ to obtain 
\begin{align}
\text{RHS of }\eqref{equation first ub prob}= \hat{\P}\Big\{\sum_{k=1}^{\floor{u}}\overline{\log H^{f^j}(a_j,\eta_k^j)-\log H^{f^j}(a_j(\lambda_j),\eta_k^j)}\geq \frac{\delta u^2}{N}\Big\}\leq C\frac{N^2}{u^3}\label{inequal first conclusion ub prob}
\end{align}
by Chebyshev's inequality and the fact that $H^f(a,\eta)\sim m_f(a)$.  The constant $C$ here depends only on $(a_1,a_2)$, $\epsilon$, and $\delta$.  We will now show how to tune $b$ and $\delta$ as functions of $(a_1,a_2)$ and $\epsilon$ to get a meaningful bound on 
\begin{equation}
\begin{split} 
\eqref{equation second ub prob}=&\hat{\P}\Big\{\overline{\log Z_{m,n}(a_1(\lambda_j),a_2(\lambda_j))}-\overline{\log Z_{m,n}(a_1,a_2)}\geq \\
&\qquad \qquad \qquad\Eh\big[\log Z_{m,n}(a_1,a_2)-\log Z_{m,n}(a_1(\lambda_j),a_2(\lambda_j))\big]+ r-z\Big\}. \label{equation long}
\end{split}
\end{equation}
Since the parameters satisfy $a_1(\lambda_j)+a_2(\lambda_j)=a_3$, by Remark \ref{remark parameter DR property}, the down-right property is still satisfied for the perturbed model with parameters $\big(a_1(\lambda_j),a_2(\lambda_j)\big)$.  Using Proposition \ref{proposition variance formula} we can evaluate the right-hand side inside the above probability
\begin{align}
=& m\Big(\psi_0^{f^1}(a_1)-\psi_0^{f^1}(a_1(\lambda_j))\Big)+n\Big(\psi_0^{f^2}(a_2)-\psi_0^{f^2}(a_2(\lambda_j))\Big)+\floor{u}\Big(\psi_0^{f^j}(a_j(\lambda_j))-\psi_0^{f^j}(a_j)\Big)-2\delta\frac{u^2}{N}\nonumber\\
=&(m-N\psi_1^{f^2}(a_2))\Big(\psi_0^{f^1}(a_1)-\psi_0^{f^1}(a_1(\lambda_j))\Big)+(n-N\psi_1^{f^1}(a_1))\Big(\psi_0^{f^2}(a_2)-\psi_0^{f^2}(a_2(\lambda_j))\Big)\nonumber\\
&+N\Big[\psi_1^{f^2}(a_2)\Big(\psi_0^{f^1}(a_1)-\psi_0^{f^1}(a_1(\lambda_j))\Big)+\psi_1^{f^1}(a_1)\Big(\psi_0^{f^2}(a_2)-\psi_0^{f^2}(a_2(\lambda_j))\Big)\Big]\nonumber\\
&+\floor{u}\Big(\psi_0^{f^j}(a_j(\lambda_j))-\psi_0^{f^j}(a_j)\Big)-2\delta\frac{u^2}{N}\nonumber\\
&\geq -\kappa_N \frac{bu}{N}C' -N(\frac{bu}{N})^2 C' + u(\frac{bu}{N}) C'' -2\delta \frac{u^2}{N} \nonumber\\
&= \frac{u}{N}\Big[C''bu-C'b^2u
-2\delta u -C'b\kappa_N\Big] \label{quantity one}
\end{align}
for some positive constants $C'$ and $C''$.  This can be obtained by taking a 2nd-order Taylor expansion of the functions $\psi^{f^j}_0$, keeping in mind that $\psi^{f^j}_1>0$.  In the last inequality we also used $u\geq 1$.

Now fixing $b$ small enough followed by then fixing $\delta$ small enough we can ensure that the entire quantity \eqref{quantity one} is $\geq C'''\frac{u^2}{N}$ for some positive constant $C'''$ as long as $u\geq C_1\kappa_N$ for some positive $C_1$.
With these restrictions
\begin{align*}
\eqref{equation second ub prob}\leq & \hat{\P}\Big\{\overline{\log Z_{m,n}(a_1(\lambda_j),a_2(\lambda_j))-\log Z_{m,n}(a_1,a_2)}\geq C'''\frac{u^2}{N}\Big\}\\
\leq & \frac{N^2}{(C''')^2 u^4}\hat{\Var}\big[\log Z_{m,n}(a_1(\lambda_j),a_2(\lambda_j))-\log Z_{m,n}(a_1,a_2)\big]\\
\leq& C\frac{N^2}{u^4}\Big(\hat{\Var}\big[\log Z_{m,n}(a_1,a_2)\big]+(m+n)\frac{bu}{N}\Big)\\
\leq & C\Big(\frac{N^2}{u^4}E_{m,n}[t_j]+\frac{N^2}{u^3}\Big).
\end{align*}
The second to last and last inequalities are applications of Lemma \ref{lemma variance comparison}, Proposition \ref{proposition variance formula}, and Lemma \ref{E[t] lemma}.  Combining this result with \eqref{inequal first conclusion ub prob} finishes the first case.

\textbf{Case 2:} $1\vee C_1\kappa_N\vee \delta N\leq u$.  Take $\delta$, $\epsilon$ fixed from the first case, let $\delta_1\in (0,\delta]$, and $\epsilon_1\in (0,\epsilon]$.  The constants $\delta_1$ and $\epsilon_1$ will be determined throughout the course of the proof.  This time, put $\lambda_1= \epsilon_1,\, \lambda_2=-\epsilon_1 ,\, r=\floor{u}\big(\psi_0^{f^j}(a_j(\lambda_j))-\psi_0^{f^j}(a_j)\big)-\delta_1 u$, and $z=\delta_1 u.$ Then
\begin{align}
\eqref{equation first ub prob}= & \hat{\P}\Big\{\sum_{k=1}^{\floor{u}}\overline{\log H^{f^j}(a_j,\eta_k^j)-\log H^{f^j}(a_j(\lambda_j),\eta_k^j))}\geq \delta_1 u\Big\}.\label{quick ref 1}
\end{align}
By Remark \ref{remark - Mellin consequences} the random variables in the summation have finite exponential moments.  A large deviation estimate gives us the existence of a positive constant $C_3$ such that \eqref{quick ref 1}$\leq e^{-C_3u}$.

We now consider \eqref{equation long}. A similar analysis to that in Case 1 tells us
that the right-hand side inside of the above probability
\begin{align}
&\geq -C'\epsilon_0 \kappa_N-C'\epsilon_0^2 N+C''\epsilon_0 u-2\delta_1 u\nonumber \\
&\geq u\Big(C''\epsilon_0-\frac{C'\epsilon_0^2}{\delta}-2\delta_1\Big)-C'\epsilon_0 \kappa_N\label{quantity two}
\end{align}
for some positive constants $C'$ and $C''$. The second line follows from $u\geq \delta N$.  Now fixing $\epsilon_0$ small enough followed by then fixing $\delta_1$ small enough we can ensure that \eqref{quantity two} $\geq Cu$ for some positive constant $C$ as long as $u\geq C_1\kappa_N$ for some positive $C_1$ (here we increase the previous constant $C_1$ found in Case 1 if necessary).  With these constraints
\[
\eqref{equation second ub prob}\leq \hat{\P}\Big\{\overline{\log Z_{m,n}(a_1(\lambda_j),a_2(\lambda_j))-\log Z_{m,n}(a_1,a_2)}\geq Cu\Big\}.
\]

Since the perturbed parameters are such that the polymer environment still has the down-right property, the random variable inside the above probability can be expressed as two sums of i.i.d.\ random variables, each of which has entries with finite exponential moments.  Therefore a large deviation estimate gives the existence of a positive constant $C_3$ such that $\eqref{equation second ub prob}\leq e^{-uC_3}$.  Combining this with \eqref{quick ref 1} completes the proof.
\end{proof}
\begin{remark}
If $\epsilon>0$ is small enough such that for all $|\lambda|\leq \epsilon$, $a_1+\lambda\in D(M_{f^1})$ and $a_2-\lambda\in D(M_{f^2})$, then the constants in Proposition \ref{proposition stationary ub} can be chosen such that the conclusion also holds for the polymer environment with parameters $(a_1+\lambda, a_2-\lambda, a_3)$ for any $|\lambda|\leq \epsilon$.
\end{remark}
Using the previous proposition, we can now bound the annealed expectation of the exit points of the polymer path from the axes.
\begin{corollary}\label{corollary UB on E[t]}
Assume that all the assumptions of Proposition \ref{proposition stationary ub} hold. Then there exists a constant $C<\infty$ depending only on $(a_1,a_2),\, \beta$, and $\gamma$ such that for both $j=1,2$,
\[
E_{m,n}[t_j]\leq C N^{2/3}.
\]
\end{corollary}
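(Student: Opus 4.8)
The plan is to write $E_{m,n}[t_j]=\sum_{u=1}^{m}P_{m,n}(t_j\ge u)$ and estimate the summands using the two tail bounds of Proposition \ref{proposition stationary ub}. Those bounds control $\mbb P\{Q_{m,n}(t_j\ge u)\ge\rho\}$ for the specific thresholds $\rho=e^{-\delta u^2/N}$ and $\rho=e^{-\delta_1 u}$, so first I would convert them into bounds on $P_{m,n}(t_j\ge u)=\E[Q_{m,n}(t_j\ge u)]$ via the elementary splitting
\[
\E[Q_{m,n}(t_j\ge u)]\le \rho + \mbb P\{Q_{m,n}(t_j\ge u)\ge\rho\},
\]
valid for any $\rho\in(0,1)$. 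This yields, for $1\vee C_1\kappa_N\le u\le\delta N$,
\[
P_{m,n}(t_j\ge u)\le e^{-\delta u^2/N}+C_2\Bigl(\tfrac{N^2}{u^4}E_{m,n}[t_j]+\tfrac{N^2}{u^3}\Bigr),
\]
and for $u\ge 1\vee C_1\kappa_N\vee\delta N$, $P_{m,n}(t_j\ge u)\le e^{-\delta_1 u}+2e^{-C_3 u}$.

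Next I would fix a large constant $A$ (chosen at the end, with $A\ge C_1\gamma$ so that $AN^{2/3}\ge C_1\kappa_N$) and split the sum over $u$ into three ranges. For $1\le u\le AN^{2/3}$ I use the trivial bound $P_{m,n}(t_j\ge u)\le 1$, contributing at most $AN^{2/3}$. For $AN^{2/3}<u\le\delta N$ I insert the first display: summing the Gaussian term $e^{-\delta u^2/N}$ over $u\ge 1$ gives $O(\sqrt N)=O(N^{2/3})$; summing $C_2N^2u^{-3}$ from $AN^{2/3}$ gives $O(N^2/(AN^{2/3})^2)=O(N^{2/3})$; and summing $C_2N^2u^{-4}$ from $AN^{2/3}$ gives $O(N^2/(AN^{2/3})^3)=O(A^{-3})$, so this piece contributes at most $(C/A^3)E_{m,n}[t_j]$ with $C$ independent of $A$. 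For $u>\delta N$ the exponential bound sums to $O(e^{-c\delta N})=O(1)$.

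Collecting the three contributions gives
\[
E_{m,n}[t_j]\le AN^{2/3}+O(N^{2/3})+\tfrac{C}{A^3}E_{m,n}[t_j]+O(1),
\]
valid once $N$ is large enough that the three ranges are consistently ordered and the hypotheses of Proposition \ref{proposition stationary ub} are met on each (concretely $N\ge(A/\delta)^3\vee(C_1\gamma/\delta)^3$, ensuring $AN^{2/3}\le\delta N$ and $\delta N\ge C_1\kappa_N$). Since $E_{m,n}[t_j]\le m<\infty$, I may take $A$ so large that $C/A^3\le 1/2$ and absorb that term into the left side, obtaining $E_{m,n}[t_j]\le C'N^{2/3}$. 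For the finitely many remaining small $N$, the trivial bound $E_{m,n}[t_j]\le m\le N\psi_1^{f^2}(a_2)+\gamma N^{2/3}$ together with $N\le N_0^{1/3}N^{2/3}$ gives the claim after enlarging the constant; the case $j=2$ is identical with $m$ replaced by $n$, since Proposition \ref{proposition stationary ub} supplies both bounds simultaneously.

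The main obstacle is the self-referential character of the estimate: the right-hand side of the first display already contains $E_{m,n}[t_j]$, so the argument only closes because one knows a priori that $E_{m,n}[t_j]<\infty$ (trivially, it is $\le m$) and can then tune the cutoff $A$ so that the coefficient $C/A^3$ of $E_{m,n}[t_j]$ is strictly less than one. Secondary care is needed to verify, for the relevant ranges of $u$, that the constraints $u\le\delta N$ and $u\ge 1\vee C_1\kappa_N$ of Proposition \ref{proposition stationary ub} genuinely hold, which is what forces the separate (easy) treatment of small $N$.
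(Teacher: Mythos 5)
Your proof is correct and follows essentially the same route as the paper: convert the quenched tail bounds of Proposition \ref{proposition stationary ub} into bounds on the annealed tail $P_{m,n}(t_j\geq u)$, split the expectation at a cutoff of order $N^{2/3}$, sum the three regimes, arrive at a self-referential inequality of the form $E_{m,n}[t_j]\leq AN^{2/3}+O(N^{2/3})+\tfrac{C}{A^3}E_{m,n}[t_j]+O(1)$, and absorb the last term by taking $A$ large (using the a priori finiteness $E_{m,n}[t_j]\leq m\vee n$). The only cosmetic difference is in the conversion step: the paper writes $P_{m,n}(t_j\geq u)=\int_0^1\P\{Q_{m,n}(t_j\geq u)\geq x\}\,dx$, splits the inner integral at the threshold, and changes variables $x=e^{-su}$ or $x=e^{-su^2/N}$; you use the more direct elementary bound $\E[Q]\leq\rho+\P\{Q\geq\rho\}$, which yields the same estimates with less bookkeeping. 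Your separate treatment of finitely many small $N$ is also fine, though the paper sidesteps it by taking the middle range of integration to be $[rN^{2/3},\,rN^{2/3}\vee\delta N]$, which is automatically empty when $rN^{2/3}\geq\delta N$.
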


\begin{proof}[Proof of Corollary \ref{corollary UB on E[t]}]
Since all of the constants $C_1$, $C_2$, $C_3$, $\delta$, $\delta_1$ determined by Proposition \ref{proposition stationary ub} depend only on $(a_1,a_2)$, $\beta$, and $\gamma$, it is sufficient to show that the constant $C$ to be determined in this proof depends only on these five constants and $\gamma$. 
Let $r\geq 1\vee C_1\gamma$.  Then $rN^{2/3}\geq 1\vee C_1 \kappa_N$.
Suppressing the $m,n$ dependence, 
\begin{align}
E[t_j]&=\int_0^\infty P(t_j\geq u)du\nonumber \\
&\leq  rN^{2/3}
+\int_{r N^{2/3}}^{r N^{2/3}\vee\delta N} P(t_j\geq u)du
+\int_{r N^{2/3}\vee\delta N}^{\infty} P(t_j\geq u)du\label{EE-C}.
\end{align}
We now bound the integrals in line \eqref{EE-C} individually.
\begin{align}
\int_{r N^{2/3}\vee\delta N}^{\infty} P(t_j\geq u)du=&\int_{r N^{2/3}\vee\delta N}^{\infty}\int_0^{e^{-\delta_1 u}} \mbb{P}\{Q(t_j\geq u)\geq x\}dx du\label{EE-C-1}\\
&+\int_{r N^{2/3}\vee\delta N}^{\infty}\int_{e^{-\delta_1 u}}^1 \mbb{P}\{Q(t_j\geq u)\geq x\}dx du.\label{EE-C-2}
\end{align}
Now
\[
\eqref{EE-C-1}\leq \int_{\delta N}^{\infty}e^{-\delta_1 u}du=\frac{1}{\delta_1}e^{-\delta_1 \delta N}\leq C \text{ for all } N\in\N.
\]
Next, by making the substitution $x=e^{-s u}$,
\[
\eqref{EE-C-2}=\int_{r N^{2/3}\vee\delta N}^{\infty}\int_{0}^{\delta_1} \P\{Q(t_j\geq u)\geq e^{-s u}\}e^{-s u} u \,ds du
\]
By Proposition \ref{proposition stationary ub}, $\P\{Q(t_j\geq u)\geq e^{-s u}\}\leq \P\{Q(t_j\geq u)\geq e^{-\delta_1 u}\} \leq 2e^{-C_3u}$ for all $u\geq r N^{2/3}\vee\delta N$ and all $0<s\leq\delta_1$.  Therefore
\[
\eqref{EE-C-2}\leq\int_{r N^{2/3}\vee\delta N}^{\infty}\int_{0}^{\delta_1}2e^{-(C_3+s) u} u\, ds du\leq \int_{\delta N}^\infty 2e^{-C_3 u}du=\frac{2}{C_3}e^{-C_3\delta N}\leq C.
\]
Combining the bounds on \eqref{EE-C-1} and \eqref{EE-C-2} shows that \eqref{EE-C} is bounded by some constant $C$ for all $N\in\N$.
We now bound the first integral of \eqref{EE-C}. Without loss of generality, assume that $r N^{2/3}<\delta N$.  Then
\begin{align}
\int_{r N^{2/3}}^{r N^{2/3}\vee\delta N} P(t_j\geq u)du
=&\int_{r N^{2/3}}^{\delta N}\int_0^{e^{-\delta \frac{u^2}{N}}} \P\{Q(t_j\geq u)\geq x\}dxdu\label{EE-B-1}\\
&+\int_{r N^{2/3}}^{\delta N}\int_{e^{-\delta \frac{u^2}{N}}}^1 \P\{Q(t_j\geq u)\geq x\}dxdu\label{EE-B-2}
\end{align}
Now
\[
\eqref{EE-B-1}\leq \int_{rN^{2/3}}^{\delta N}e^{-\delta \frac{u^2}{N}}du\leq \delta N e^{-\delta r^2 N^{1/3}}\leq \delta N e^{-\delta (1\vee C_1 \gamma )^2 N^{1/3}}\leq C.
\]
Next, by making the substitution $x=e^{-s \frac{u^2}{N}}$, 
\[
\eqref{EE-B-2}=\int_{rN^{2/3}}^{\delta N}\int_0^\delta \P\{Q(t_j\geq u)\geq e^{-s \frac{u^2}{N}}\}e^{-s \frac{u^2}{N}}\frac{u^2}{N}dsdu.
\]
By Proposition \ref{proposition stationary ub}, $ \P\{Q(t_j\geq u)\geq e^{-s \frac{u^2}{N}}\} \leq \P\{Q(t_j\geq u)\geq e^{-\delta \frac{u^2}{N}}\} \leq C_2(\frac{N^2}{u^4}E[t_j]+\frac{N^2}{u^3})$ for all $r N^{2/3}\leq u \leq \delta N$ and all $0<s\leq\delta$.  
Therefore
\begin{align*}
\eqref{EE-B-2}\leq \int_{rN^{2/3}}^\infty \int_{e^{-s \frac{u^2}{N}}}^1 C_2(\frac{N^2}{u^4}E[t_j]+\frac{N^2}{u^3})dsdu\leq C_2(\frac{E[t_j]}{3r^3}+\frac{N^{2/3}}{2r^2}).
\end{align*}
Combining the bounds on \eqref{EE-B-1}, \eqref{EE-B-2} and \eqref{EE-C}, we get the existence of a constant $C$ such that $E[t_j]\leq r N^{2/3}+C+C\frac{E[t_j]}{r^3}+C\frac{N^{2/3}}{r^2}$ for all $r\geq 1\vee C_1 \gamma $.   We can now fix $r$ large enough with respect to $C$ then rearrange to get the desired result.
\end{proof}

We can now give the proof of the upper bound of the variance of the free energy.

\begin{proof}[Proof of upper bound of Theorem \ref{theorem stationary variance bounds}]
Averaging \eqref{equation var formula 1} and \eqref{equation var formula 2} of Proposition \ref{proposition variance formula} then applying Lemma \ref{E[t] lemma} followed by Corollary \ref{corollary UB on E[t]} (recalling that $\psi_1^{f^j}(a_j) = \Var[\log R^j]$) gives
\begin{align*}
\Var[\log Z_{m,n}] &= E_{m,n}[\sum_{i=1}^{t_1} L_{R^1}(R^1_{i,0})] + E_{m,n}[\sum_{j=1}^{t_2} L_{R^2}(R^2_{0,j})]\\
&\leq C(E_{m,n}[t_1] + E_{m,n}[t_2] + 2)\\
&\leq C N^{2/3},
\end{align*}
which concludes the proof.
\end{proof}

The following corollary is obtained by combining Proposition \ref{proposition stationary ub} and Corollary \ref{corollary UB on E[t]}.
\begin{corollary}\label{corollary quenched tails}
Assume that the polymer environment is distributed as in \eqref{polymer environment distribution} and the sequence $(m,n)=(m_N,n_N)_{N=1}^{\infty}$ satisfies \eqref{direction} for some positive constant $\gamma$.
Then there exists positive constants $b_0$, $C_2$, $C_3$, $\delta$, and $\delta_1$ depending on $(a_1,a_2),\, \beta$, and $\gamma$ such that for all $N\in \N$ and $b_0\leq b\leq \delta N^{1/3}$,
\begin{align}\label{bound b small}
\P\big\{Q_{m,n}(t_j\geq bN^{2/3})\geq e^{-\delta b^2N^{1/3}}\big\} \leq \frac{2C_2}{b^3}\quad \text{for }j=1,2,
\end{align}
while for all $N\in \N$ and $b\geq b_0\vee \delta N^{1/3}$,
\begin{align}\label{bound b big}
\P\big\{ Q_{m,n}(t_j\geq bN^{2/3})\geq e^{-\delta_1 bN^{2/3}} \big\} \leq 2e^{-C_3 b N^{2/3}}\quad \text{for }j=1,2.
\end{align}
\end{corollary}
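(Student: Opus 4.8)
The plan is to obtain \eqref{bound b small} and \eqref{bound b big} directly from Proposition \ref{proposition stationary ub} by making the substitution $u = bN^{2/3}$, and to use Corollary \ref{corollary UB on E[t]} to dispose of the $E_{m,n}[t_j]$ term that appears in the first bound.

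First I would verify that the hypotheses of Proposition \ref{proposition stationary ub} hold under the present assumptions. By Remark \ref{remark - Mellin consequences}(c), $\Var[\log R^1] = \psi_1^{f^1}(a_1)$ and $\Var[\log R^2] = \psi_1^{f^2}(a_2)$, so condition \eqref{direction} says exactly that $|m_N - N\psi_1^{f^2}(a_2)| \vee |n_N - N\psi_1^{f^1}(a_1)| \leq \gamma N^{2/3}$; hence one may take $\kappa_N = \gamma N^{2/3}$ in Proposition \ref{proposition stationary ub}. Let $C_1, C_2, C_3, \delta, \delta_1$ denote the constants it produces (all depending only on $(a_1,a_2), \beta, \gamma$), and set $b_0 := 1 \vee C_1\gamma$. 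This choice guarantees $b_0 N^{2/3} \geq 1$ for every $N \in \N$, which is what makes the endpoint conditions uniform in $N$.

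Next, for the first bound, take $b_0 \leq b \leq \delta N^{1/3}$ and put $u = bN^{2/3}$. Then $1 \vee C_1\kappa_N = 1 \vee C_1\gamma N^{2/3} \leq bN^{2/3} = u$ and $u = bN^{2/3} \leq \delta N$, so $u$ lies in the first range of Proposition \ref{proposition stationary ub}. Since $\delta u^2/N = \delta b^2 N^{1/3}$, $N^2/u^4 = b^{-4}N^{-2/3}$, and $N^2/u^3 = b^{-3}$, the conclusion of that case reads
\[
\P\big\{Q_{m,n}(t_j \geq bN^{2/3}) \geq e^{-\delta b^2 N^{1/3}}\big\} \leq C_2\Big(\frac{E_{m,n}[t_j]}{b^4 N^{2/3}} + \frac{1}{b^3}\Big).
\]
By Corollary \ref{corollary UB on E[t]}, $E_{m,n}[t_j] \leq CN^{2/3}$ for a constant $C$ depending only on $(a_1,a_2), \beta, \gamma$, so the right-hand side is at most $C_2(Cb^{-4} + b^{-3}) \leq C_2(C+1)b^{-3}$ because $b \geq b_0 \geq 1$; absorbing the factor $C+1$ into $C_2$ gives \eqref{bound b small}. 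For the second bound, take $b \geq b_0 \vee \delta N^{1/3}$ and again set $u = bN^{2/3}$; then $1 \vee C_1\kappa_N \vee \delta N \leq bN^{2/3} = u$ (the first two terms are covered by $b \geq b_0$ and the third by $b \geq \delta N^{1/3}$), so $u$ is in the second range, and substituting $u = bN^{2/3}$ into that conclusion produces \eqref{bound b big} verbatim.

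There is no genuine analytic obstacle here: the substance was already carried out in Proposition \ref{proposition stationary ub} and Corollary \ref{corollary UB on E[t]}, and this corollary merely repackages those statements at the scale $bN^{2/3}$. The only point needing care is the bookkeeping — choosing $b_0 = 1 \vee C_1\gamma$ so that the lower endpoints of the two $b$-ranges line up simultaneously with $1 \vee C_1\kappa_N$ and with $\delta N$ for all $N$, and checking that the constants remain functions of $(a_1,a_2), \beta, \gamma$ alone throughout.
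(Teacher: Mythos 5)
Your proposal is correct and follows the same route the paper intends: the paper's own proof is nothing more than the one-line remark that the corollary follows from Proposition \ref{proposition stationary ub} combined with Corollary \ref{corollary UB on E[t]}, and you have spelled out exactly that substitution $u=bN^{2/3}$ with $\kappa_N=\gamma N^{2/3}$. The only cosmetic difference is that the paper writes the constant as $2C_2$ (achievable by enlarging $b_0$ so that $Cb^{-4}\leq b^{-3}$), whereas you absorb the factor $C+1$ into a redefined $C_2$, which is equally legitimate since the corollary merely asserts existence of such constants.
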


\begin{lemma}\label{lemma b cubed}
Assume that the polymer environment is distributed as in \eqref{polymer environment distribution} and the sequence $(m,n)=(m_N,n_N)_{N=1}^{\infty}$ satisfies \eqref{direction} for some positive constant $\gamma$. Then there exist constants $b_0\geq 1$ and $C>0$ depending only on $(a_1, a_2)$, $\beta$, and $\gamma$  such that for all $b\geq b_0$ and $N\in \N$, 
\begin{gather*}
P_{m,n}(t_j\geq b N^{2/3})\leq \frac{C}{b^3}\quad \text{for }j=1,2.
\end{gather*}
Therefore, for all $0< p <3$ there exists a positive constant $C'$ depending on $(a_1,a_2),\, \beta,\, \gamma$, and $p$ such that for all $N\in \N$,
\[
E_{m,n}\Big[\Big(\frac{t_j}{N^{2/3}}\Big)^p\Big]\leq C'\quad \text{for } j=1,2.
\]
\end{lemma}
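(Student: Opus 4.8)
The plan is to read the tail bound $P_{m,n}(t_j\ge bN^{2/3})\le C/b^3$ off the quenched tail estimates already recorded in Corollary \ref{corollary quenched tails}. Writing $P_{m,n}(t_j\ge bN^{2/3})=\E\bigl[Q_{m,n}(t_j\ge bN^{2/3})\bigr]$ and using $0\le Q_{m,n}\le1$, one has for any threshold $\tau\in(0,1)$ the elementary inequality
\[
P_{m,n}(t_j\ge bN^{2/3})\;\le\;\P\bigl\{Q_{m,n}(t_j\ge bN^{2/3})\ge\tau\bigr\}+\tau .
\]
Choosing $\tau$ equal to the threshold appearing in Corollary \ref{corollary quenched tails} makes the first term directly controllable, leaving a deterministic remainder $\tau$ that I would convert to a $C/b^3$ bound via elementary exponential-versus-polynomial estimates.

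Concretely, I would treat the two ranges from Corollary \ref{corollary quenched tails} separately, noting that $b_0\le b\le\delta N^{1/3}$ together with $b\ge b_0\vee\delta N^{1/3}$ exhaust all $b\ge b_0$. For $b_0\le b\le\delta N^{1/3}$ take $\tau=e^{-\delta b^2 N^{1/3}}$: the first term is at most $2C_2/b^3$ by \eqref{bound b small}, while the constraint $b\le\delta N^{1/3}$ forces $\delta b^2 N^{1/3}\ge b^3$, so $\tau\le e^{-b^3}\le b^{-3}$ since $s e^{-s}\le1$. For $b\ge b_0\vee\delta N^{1/3}$ take $\tau=e^{-\delta_1 bN^{2/3}}$: the first term is at most $2e^{-C_3 bN^{2/3}}$ by \eqref{bound b big}, and since $N\ge1$ both this term and $\tau$ are bounded by $e^{-cb}$ with $c=C_3\wedge\delta_1$, hence by $C/b^3$ using $s^3 e^{-s}\le 27e^{-3}$. (For $b$ large enough that $bN^{2/3}>m+n$ the probability vanishes identically, so nothing further is needed.) Taking $C$ to be the largest constant produced then gives $P_{m,n}(t_j\ge bN^{2/3})\le C/b^3$ for all $b\ge b_0$, $N\in\N$, and $j=1,2$.

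The moment bound follows by integrating the tail: for $0<p<3$,
\[
E_{m,n}\Bigl[\bigl(t_j/N^{2/3}\bigr)^p\Bigr]=p\int_0^\infty b^{p-1}\,P_{m,n}\bigl(t_j\ge bN^{2/3}\bigr)\,db\;\le\;p\int_0^{b_0}b^{p-1}\,db+pC\int_{b_0}^\infty b^{p-4}\,db,
\]
using $P_{m,n}\le1$ on $[0,b_0]$ and the tail bound on $[b_0,\infty)$; since $p-4<-1$ the second integral converges and the right-hand side is a finite constant $C'$ depending only on $(a_1,a_2)$, $\beta$, $\gamma$, and $p$. I expect no genuine obstacle: the statement is essentially a corollary of Corollary \ref{corollary quenched tails}, and the only step needing care is the deterministic remainder estimate, where in the small-$b$ regime one must invoke the constraint $b\le\delta N^{1/3}$ — not merely $N\ge1$ — to turn $e^{-\delta b^2 N^{1/3}}$ into $e^{-b^3}$.
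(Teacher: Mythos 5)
Your proposal is correct and follows essentially the same route as the paper: both arguments are direct applications of Corollary \ref{corollary quenched tails}, and both produce the identical intermediate bound $\tfrac{2C_2}{b^3}+e^{-\delta b^2 N^{1/3}}$ (resp.\ $2e^{-C_3 bN^{2/3}}+e^{-\delta_1 bN^{2/3}}$). The only cosmetic difference is that the paper writes $P_{m,n}(t_j\ge bN^{2/3})=\int_0^1\P\{Q_{m,n}(t_j\ge bN^{2/3})\ge x\}\,dx$ and then substitutes $x=e^{-sb^2N^{1/3}}$ and splits at $s=\delta$, whereas you reach the same estimate immediately via the layer-cake inequality $\E[Q]\le\tau+\P(Q\ge\tau)$; your integration of the tail for the moment bound matches what the paper leaves implicit.
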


\begin{proof}[Proof of Lemma \ref{lemma b cubed}]
By Corollary \eqref{corollary quenched tails} there exist positive constants $b_0,\, C_2,\,C_3,\, \delta,\, \delta_1$ with $b_0\geq 1$ such that \eqref{bound b small} holds for $b_0\leq b \leq \delta N^{1/3}$ while \eqref{bound b big} holds for $b\geq \delta N^{1/3}\vee b_0$.

We first estimate for $b\leq \delta N^{1/3}$,
\begin{align}
P_{m,n}(t_j \geq bN^{2/3}) &= \int_0^1 \P \big\{Q_{m,n}(t_j \geq bN^{2/3})\geq x\big\} dx\nonumber\\
&= \int_0^\delta \P\big\{Q_{m,n}(t_j \geq b N^{2/3}) \geq e^{-sb^2N^{1/3}} \big\} b^2N^{1/3} e^{-s b^2N^{1/3}} ds \label{b cubed integral 1}\\
&\qquad + \int_\delta^\infty \P\big\{Q_{m,n}(t_j \geq b N^{2/3}) \geq e^{-sb^2N^{1/3}} \big\} b^2N^{1/3} e^{-s b^2N^{1/3}} ds\label{b cubed integral 2}\\
&\leq \frac{2C_2}{b^3} + e^{-\delta b^2 N^{1/3}} \leq \frac{C}{b^3}\nonumber
\end{align}
for some positive constant $C$, where we made the substitution $x= e^{-sb^2N^{1/3}}$, used \eqref{bound b small} to bound the probability inside the integral of \eqref{b cubed integral 1}, and bounded the probability inside the integral of \eqref{b cubed integral 2} by 1. For $b\geq \delta N^{1/3}$, we make the substitution $x= e^{-sbN^{2/3}}$ to get
\begin{align}
P_{m,n}(t_j \geq bN^{2/3}) &= \int_0^1 \P \big\{Q_{m,n}(t_j \geq bN^{2/3})\geq x\big\} dx\nonumber\\
&= \int_0^{\delta_1} \P\big\{Q_{m,n}(t_j \geq b N^{2/3}) \geq e^{-sbN^{2/3}} \big\} bN^{2/3} e^{-s bN^{2/3}} ds \label{b cubed integral 3}\\
&\qquad + \int_{\delta_1}^\infty \P\big\{Q_{m,n}(t_j \geq b N^{2/3}) \geq e^{-sbN^{2/3}} \big\} bN^{2/3} e^{-s bN^{2/3}} ds\label{b cubed integral 4}\\
&\leq 2e^{-C_3bN^{2/3}} + e^{-\delta_1 bN^{2/3}} \leq \frac{C}{b^3}\nonumber
\end{align}
increasing the constant $C$ if necessary, where we used \eqref{bound b big} to bound the probability inside the integral of \eqref{b cubed integral 3} and bounded the probability inside the integral of \eqref{b cubed integral 4} by 1.
\end{proof}

\begin{proof}[Proof of Corollary \ref{corollary CLT}]
Let $m_1 = \floor{N\Var[\log R^2]}$. Then since $Z_{m,n}=Z_{m_1,n}\prod_{i=m_1+1}^m R^1_{i,n}$
\[N^{-\alpha/2} \overline{\log Z_{m,n}} = N^{-\alpha/2} \overline{\log Z_{m_1, n}} + N^{-\alpha/2}\sum_{i=m_1+1}^m \overline{\log R^1_{i,n}}. \]
The sequence $(m_1,n)$ satisfies \eqref{direction}. Using Chebyshev's inequality and the upper bound of Theorem \ref{theorem stationary variance bounds} shows that the term $N^{-\alpha/2}\overline{\log Z_{m_1,n}}$ converges to zero in probability. By the down-right property, the summands in the second term are i.i.d.\ with mean zero and variance $\Var[\log R^1]$. By the central limit theorem, $N^{-\alpha/2}\sum_{i=m_1+1}^m \overline{\log R^1_{i,n}}$ converges in distribution to a centered normal with variance $c_1\Var[\log R^1]$.
\end{proof}

\section{Proof of path fluctuation upper bound}\label{section PF UB}

Given $0\leq k< m$ and $0\leq l< n$, we define a partition function $Z_{m,n}^{(k,l)}$ and quenched polymer measure $Q_{m,n}^{(k,l)}$ on up-right paths from $(k,l)$ to $(m,n)$ by using the collections $\{R^1_{i,l}: k+1\leq i\leq m\}$ and $\{R^2_{k,j}: l+1\leq j \leq n\}$ as weights along the edges of the south and west boundaries of the rectangle $[k,m]\times [l,n]$ respectively, and the weights $\{(Y^1_z,Y^2_z): z\in \{k+1,\ldots, m\}\times \{l+1,\ldots, n\}\}$ for the remaining edges. 
When the original polymer environment \eqref{environment} has the down-right property, it follows that $Z_{m,n}^{(k,l)}$ has the same distribution as $Z_{m-k,n-l}$.

For an up-right path $x_\centerdot$ from $(k,l)$ to $(m,n)$, define
\[
t_1^{(k,l)} (x_\centerdot) := \max \{i: (k+i,l)\in x_{\centerdot}\},\qquad t_2^{(k,l)} (x_\centerdot) := \max\{j: (k, l+j)\in x_{\centerdot}\}.
\]

\begin{lemma}\label{lemma sub-box}
Assume that the polymer environment satisfies the down-right property. Then for all $0\leq k< m$, $0\leq l< n$, and $u\geq 0$,
\begin{align}
Q_{m,n} (v_1(l) \geq k+u) = Q_{m,n}^{(k,l)} ( t_1^{(k,l)} \geq u ) \stackrel{d}{=} Q_{m-k,n-l}( t_1 \geq u ),\label{equation sub-box 1}\\
Q_{m,n} (w_1(k) \geq l+u) = Q_{m,n}^{(k,l)} ( t_2^{(k,l)} \geq u )\stackrel{d}{=} Q_{m-k,n-l}( t_2 \geq u ).\label{equation sub-box 2}
\end{align}
\end{lemma}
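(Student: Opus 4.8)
The plan is to reduce both quenched probabilities in \eqref{equation sub-box 1} to one and the same expression built from the original environment. The key input, valid pointwise in the environment, is the factorization $Z_{m,n}=Z_{k,l}\,Z_{m,n}^{(k,l)}$, holding for all $m\ge k$ and $n\ge l$. I would prove it by induction on $m+n$: when $m>k$ and $n>l$, the vertex $(m,n)$ lies in the interior index set $\{k+1,\dots,m\}\times\{l+1,\dots,n\}$ of the rectangle, so decomposing a path from $(k,l)$ to $(m,n)$ by its last edge — and using that the sub-box weights on $[k,m-1]\times[l,n]$ are the same whether read off from $[k,m]\times[l,n]$ or from $[k,m-1]\times[l,n]$ — gives the recursion $Z_{m,n}^{(k,l)}=Y^1_{(m,n)}Z_{m-1,n}^{(k,l)}+Y^2_{(m,n)}Z_{m,n-1}^{(k,l)}$, which is \eqref{equation Z recursion} for $Z^{(k,l)}$; and along the lines $m=k$ and $n=l$ the sub-box partition function is a single telescoping product of boundary weights, $Z_{k,n}^{(k,l)}=\prod_{j=l+1}^{n}R^2_{k,j}=Z_{k,n}/Z_{k,l}$ and $Z_{m,l}^{(k,l)}=\prod_{i=k+1}^{m}R^1_{i,l}=Z_{m,l}/Z_{k,l}$, by $R^1_x=Z_x/Z_{x-\alpha_1}$ and $R^2_x=Z_x/Z_{x-\alpha_2}$. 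Since $Z_{m,n}$ and $Z_{k,l}Z_{m,n}^{(k,l)}$ satisfy the same recursion with the same values along those two lines, they coincide.

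Next I would prove the first equality in \eqref{equation sub-box 1}. As paths are up-right and $l<n$, every $x_\centerdot\in\Pi_{m,n}$ crosses once from height $l$ to height $l+1$, through a unique vertical edge $\{(j,l),(j,l+1)\}$, and $v_1(l)=j$ on that event; similarly every up-right path from $(k,l)$ to $(m,n)$ with $t_1^{(k,l)}\ge 1$ runs along the south boundary to $(j,l)$ with $j-k=t_1^{(k,l)}$ and then crosses upward through such an edge. Take $u\ge 1$ (for $0<u<1$ the statement reduces to $u=1$ by integrality of $v_1(l)$ and $t_1^{(k,l)}$; for $u=0$, $k=0$ both sides equal $1$). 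Decomposing each path at its upward crossing edge, I get that $Z_{m,n}\,Q_{m,n}(v_1(l)\ge k+u)$ equals $\sum_{j\ge k+u}Z_{j,l}\,Y^2_{(j,l+1)}\,W_j$ while $Z_{m,n}^{(k,l)}\,Q_{m,n}^{(k,l)}(t_1^{(k,l)}\ge u)$ equals $\sum_{j\ge k+u}(Z_{j,l}/Z_{k,l})\,Y^2_{(j,l+1)}\,W_j$, where $W_j$ is the sum over up-right paths from $(j,l+1)$ to $(m,n)$ of the product of their edge weights. The quantity $W_j$ is literally the same in both because for $j\ge k+u\ge k+1\ge 1$ the edge $\{(j,l),(j,l+1)\}$ and all edges to its northeast are bulk edges carrying identical weights in the original model and in the sub-box, and $\prod_{i=k+1}^{j}R^1_{i,l}=Z_{j,l}/Z_{k,l}$ handles the south-boundary run. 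By the factorization $Z_{m,n}=Z_{k,l}Z_{m,n}^{(k,l)}$, both $Q_{m,n}(v_1(l)\ge k+u)$ and $Q_{m,n}^{(k,l)}(t_1^{(k,l)}\ge u)$ therefore equal $Z_{m,n}^{-1}\sum_{j\ge k+u}Z_{j,l}\,Y^2_{(j,l+1)}\,W_j$, which is the desired equality.

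For the distributional statement, I would invoke the down-right property: the down-right path tracing an extension of the south and west boundary of the rectangle has $\Lambda$-weights $\{R^1_{i,l}\}_{k<i\le m}$ and $\{R^2_{k,j}\}_{l<j\le n}$, hence these are independent with the correct marginals $R^1,R^2$; moreover they are measurable with respect to the original weights indexed in $([0,m]\times[0,l])\cup([0,k]\times[0,n])$ and so are independent of the interior bulk weights $\{(Y^1_z,Y^2_z):z\in\{k+1,\dots,m\}\times\{l+1,\dots,n\}\}$. After translating by $(-k,-l)$ the family of weights defining $Q_{m,n}^{(k,l)}$ thus has exactly the joint law of the family defining $Q_{m-k,n-l}$ — this is the observation stated just before the lemma, upgraded from $Z_{m,n}^{(k,l)}$ to the whole quenched measure — and under this identification $t_1^{(k,l)}$ becomes $t_1$. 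Hence $Q_{m,n}^{(k,l)}(t_1^{(k,l)}\ge u)\overset{d}{=}Q_{m-k,n-l}(t_1\ge u)$. The statements \eqref{equation sub-box 2} follow verbatim with the two coordinate axes interchanged.

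The step I expect to be most delicate is the combinatorial bookkeeping: proving the factorization $Z_{m,n}=Z_{k,l}Z_{m,n}^{(k,l)}$, correctly classifying each sub-box edge as a boundary (partition-function-ratio) weight or a bulk weight, dealing with the degenerate cases $m=k$, $n=l$ and with small or non-integer $u$, and checking that $W_j$ truly coincides on both sides. Once that is in place, the probabilistic content — independence via the down-right property — is immediate from results already established above.
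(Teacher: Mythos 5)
Your proposal is correct and follows essentially the same route as the paper: prove the factorization $Z_{m,n}=Z_{k,l}Z_{m,n}^{(k,l)}$, use a crossing-edge decomposition to identify the two quenched probabilities, and invoke the down-right property for the distributional identity. The only genuine difference is in how the factorization is established: the paper writes out the exit-point decomposition of $Z_{m,n}^{(k,l)}$ explicitly, converts each boundary product $\prod R^j$ into a telescoping ratio of $Z$'s, and observes that the result is exactly the analogous decomposition of $Z_{m,n}/Z_{k,l}$; you instead show that $(m,n)\mapsto Z_{k,l}Z_{m,n}^{(k,l)}$ and $(m,n)\mapsto Z_{m,n}$ satisfy the same two-variable recursion \eqref{equation Z recursion} with matching boundary data on the lines $m=k$ and $n=l$. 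Both are valid; the paper's version has the small advantage of simultaneously producing the formula $Q^{(k,l)}_{m,n}(t_1^{(k,l)}=r)=Q_{m,n}(v_1(l)=k+r)$ in one pass, whereas you separate the two steps cleanly. One small caution: your edge-case discussion (``for $u=0$, $k=0$ both sides equal $1$'') correctly spots that the first equality in \eqref{equation sub-box 1} is only literally an identity for $u\geq 1$ (since $Q_{m,n}(v_1(l)\geq k)<1$ in general when $k\geq 1$); the paper's own proof has the same implicit restriction (it uses the crossing-edge formula which is valid for $r\geq 1$), and the lemma is only ever invoked with $u$ of order $N^{2/3}$, so this is harmless, but it would be cleaner to state the lemma for $u\geq 1$.
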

\begin{proof}
For $0\leq i<m$ and $0\leq j <n$, we let 
\[
Z_{(i,j),(m,n)}:= \sum_{x_\centerdot} \prod_{k=1}^{(m-i)+(n-j)} \omega_{(x_{k-1},x_k)}
\]
denote the partition function for up-right paths from $(i,j)$ to $(m,n)$, where the sum is taken over all such paths. A decomposition shows that
\begin{align*}
Z_{m,n}^{(k,l)} &= \sum_{i=k+1}^m \left(\prod_{a=k+1}^i R^1_{a,l} \right)Y^2_{i,l+1} Z_{(i,l+1),(m,n)} + \sum_{j=l+1}^n \left(\prod_{b=l+1}^j R^2_{k,b}\right)Y^1_{k+1,j} Z_{(k+1,j),(m,n)}\\
&=\sum_{i=k+1}^m \frac{Z_{i,l}}{Z_{k,l}}Y^2_{i,l+1} Z_{(i,l+1),(m,n)} + \sum_{j=l+1}^n \frac{Z_{k,j}}{Z_{k,l}} Y^1_{k+1,j} Z_{(k+1,j),(m,n)}\\
&= \frac{Z_{m,n}}{Z_{k,l}}.
\end{align*}
We then have that for $r\in \{0,\ldots, m-k\}$
\begin{align*}
Q_{m,n}^{(k,l)}(t_1^{(k,l)} = r) &= \frac{1}{Z_{m,n}^{(k,l)}}\left(\prod_{i=1}^r R^1_{k+i,l} \right) Y^2_{k+r,l+1} Z_{(k+r,l+1),(m,n)}\\
&= \frac{1}{Z_{m,n}^{(k,l)}}\frac{Z_{k+r,l}}{Z_{k,l}} Y^2_{k+r,l+1} Z_{(k+r,l+1),(m,n)}\\
&= \frac{Z_{k+r,l} Y^2_{k+r,l+1} Z_{(k+r,l+1),(m,n)}}{Z_{m,n}}\\
&= Q_{m,n}(v_1(l) = k+r).
\end{align*}
Summing over $r\geq u$ gives the first equality in \eqref{equation sub-box 1}. The equality in distribution follows from the down-right property. An analogous argument gives \eqref{equation sub-box 2}.
\end{proof}

We can now prove the upper bound on the polymer path fluctuations under the annealed measure.

\begin{proof}[Proof of Theorem \ref{theorem stationary PF UB}]
If $\tau = 0$ this reduces to Lemma \ref{lemma b cubed}.  If $\tau \in (0,1)$ put $(k,l) = (\floor{\tau m}, \floor{\tau n})$. Then the sequence $(m-k, n-l)$ satisfies 
\begin{equation}\label{m-k n-l direction}
|m-k -M \psi_1^{f^2}(a_2)| \vee |n-l -M \psi_1^{f^1}(a_1)|\leq \gamma_0 M^{2/3}, 
\end{equation}
where $\gamma_0$ is a positive constant depending only on $\tau$ and $\gamma$ and $M= (1-\tau)N$.
We then apply Lemma \ref{lemma sub-box} to get
\begin{align}
Q_{m,n}(v_1(\floor{\tau n}) \geq \tau m + bN^{2/3}) \leq Q_{m,n}(v_1(\floor{\tau n}) \geq \floor{\tau m} + bN^{2/3})&\stackrel{d}{=} Q_{m-k,n-l}(t_1 \geq bN^{2/3}).\nonumber
\end{align} 
Applying Lemma \ref{lemma b cubed}, we get
\begin{align}\label{pf-bound1}
P_{m,n}(v_1(\floor{\tau n}) \geq \tau m + bN^{2/3}) \leq \frac{C}{b^3}.
\end{align}
The same argument in the vertical direction gives us
\begin{align}\label{pf-bound3}
P_{m,n}(w_1(\floor{\tau m}) \geq \tau n + bN^{2/3}) \leq \frac{C}{b^3}.
\end{align}

To prove the corresponding bounds for $v_0$ and $w_0$ we now let $k= \floor{\tau m-bN^{2/3}}$ and $l=\floor{\tau n -bN^{2/3}\frac{n}{m}}$. Again $(m-k,n-l)$ will satisfy \eqref{m-k n-l direction} for a different constant $\gamma_0$. Since $w_1(k)\geq\floor{\tau n}$ implies that $v_0(\floor{\tau n})\leq k$, it follows that 
\begin{align*}
Q_{m,n}(v_0(\floor{\tau n}) \leq \tau m - bN^{2/3} ) &\leq Q_{m,n}( w_1(k)\geq \floor{\tau n} )\\
&= Q_{m,n}^{(k,l)}( t_2^{(k,l)}\geq \floor{\tau n} - l )\\
&\leq Q_{m,n}^{(k,l)}( t_2^{(k,l)} \geq  C bN^{2/3})\\
&\stackrel{d}{=}Q_{m-k, n-l}(t_2\geq CbN^{2/3} ),
\end{align*}
for some constant $C$ depending on $(a_1, a_2)$, $\beta$, and  $\gamma$. Applying Lemma \ref{lemma b cubed} gives
\begin{align}\label{pf-bound2}
P_{m,n}( v_0(\floor{\tau n}) \leq \tau m - bN^{2/3})  \leq \frac{C}{b^3}.
\end{align}
An analogous argument shows that
\begin{align}\label{pf-bound4}
P_{m,n}( w_0(\floor{\tau m}) \leq \tau n - bN^{2/3}) \leq \frac{C}{b^3}.
\end{align}
Combining bounds \eqref{pf-bound1} and \eqref{pf-bound2} gives \eqref{pf-bound-v}, and \eqref{pf-bound3} with \eqref{pf-bound4} gives \eqref{pf-bound-h}, completing the proof.
\end{proof}

\section{Proof of variance and path fluctuation lower bounds}\label{section lower bounds}
\begin{proposition}\label{proposition lower bound}
Assume that the polymer environment is distributed as in \eqref{polymer environment distribution} and the sequence $(m,n)=(m_N,n_N)_{N=1}^{\infty}$ satisfies \eqref{direction} for some positive constant $\gamma$. Then there exist positive constants $c_0,\epsilon_0, N_0$ depending only on $(a_1,a_2)$, $\beta$ and $\gamma$ such that for all $N\geq N_0$,
\begin{align*}
\mathbb{P}\big( \overline{\log Z_{m,n}} \geq c_0 N^{1/3}\big)\geq \epsilon_0.
 \end{align*}
\end{proposition}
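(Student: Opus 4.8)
The plan is to run the perturbation-and-coupling argument of Cator--Groeneboom and Bal\'azs--Cator--Sepp\"al\"ainen, in the form Sepp\"al\"ainen used for the log-gamma model. Fix a perturbation size $\lambda=\rho N^{-1/3}$ with $\rho>0$ a constant to be chosen, and write $Z=Z_{m,n}(a_1,a_2)$ and $\hat Z=Z_{m,n}(a_1+\lambda,a_2-\lambda)$ for the stationary free energies at the given and perturbed parameters, both built from the \emph{single} coupled environment \eqref{def coupled environment}. Since the perturbation preserves $a_1+a_2=a_3$, by Remark \ref{remark parameter DR property} the perturbed model still has the down-right property, and $(m,n)$ still satisfies \eqref{direction} for it with the constant worsened only by $O(\rho)$; hence Theorem \ref{theorem stationary variance bounds}, Corollary \ref{corollary UB on E[t]}, and Corollary \ref{corollary quenched tails} all apply to $\hat Z$ (via the uniformity noted after Proposition \ref{proposition stationary ub}). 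The argument rests on two estimates: a deterministic lower bound of order $N^{1/3}$ on $\E[\log\hat Z]-\E[\log Z]$, and a matching high-probability \emph{upper} bound of order $N^{1/3}$ on $\log\hat Z-\log Z$. Taking $\rho$ large makes the second strictly smaller than the first, which forces a positive tail of $\overline{\log Z}$ at scale $N^{1/3}$.

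For the first estimate, Proposition \ref{proposition variance formula} gives $\E[\log\hat Z]-\E[\log Z]=m\bigl(\psi_0^{f^1}(a_1+\lambda)-\psi_0^{f^1}(a_1)\bigr)+n\bigl(\psi_0^{f^2}(a_2-\lambda)-\psi_0^{f^2}(a_2)\bigr)$. Taylor expanding: the first-order part is $\lambda\bigl(m\psi_1^{f^1}(a_1)-n\psi_1^{f^2}(a_2)\bigr)=O(\gamma\rho N^{1/3})$ by \eqref{direction}; the second-order part is $\tfrac{\lambda^2}{2}\bigl(m\psi_2^{f^1}(a_1)+n\psi_2^{f^2}(a_2)\bigr)=\tfrac{\kappa\rho^2}{2}N^{1/3}+o(N^{1/3})$, where $\kappa:=\psi_1^{f^2}(a_2)\psi_2^{f^1}(a_1)+\psi_1^{f^1}(a_1)\psi_2^{f^2}(a_2)$; and the remainder is $o(N^{1/3})$. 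The $\psi$-function facts in Appendix \ref{appendix psi} show $\kappa>0$ for each of the four models on its parameter range, so there are $\rho_1>0$ and $N_1(\rho)$ with $\E[\log\hat Z]-\E[\log Z]\geq \tfrac14\kappa\rho^2 N^{1/3}=:\Delta_N$ for all $\rho\geq\rho_1$, $N\geq N_1$.

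For the second estimate, integrating the derivative identity \eqref{equation deriv logZ} along $s\mapsto(a_1+s,a_2-s)$ and using $L^{f^2}\geq 0$ gives
\[
\log\hat Z-\log Z\;\leq\;\int_0^\lambda E^{Q_{m,n}^{(a_1+s,a_2-s)}}\Bigl[\sum_{k=1}^{t_1}L^{f^1}\bigl(a_1+s,H^{f^1}(a_1+s,\eta^1_k)\bigr)\Bigr]\,ds.
\]
Cutting the inner sum at $bN^{2/3}$ and invoking Corollary \ref{corollary quenched tails} for $\hat Z$ together with the monotonicity of $s\mapsto Q^{(a_1+s,a_2-s)}_{m,n}(t_1\geq k)$ from Lemma \ref{lemma Q(t_j>k)} (so that one event of probability $\geq 1-2C_2b^{-3}$ forces $Q^{(a_1+s,a_2-s)}_{m,n}(t_1\geq bN^{2/3})\leq e^{-\delta b^2N^{1/3}}$ for \emph{all} $s\in[0,\lambda]$), the right side is, off that event and an additional event of probability $o(1)$, at most
\[
\sum_{k=1}^{bN^{2/3}}\int_0^\lambda L^{f^1}\bigl(a_1+s,H^{f^1}(a_1+s,\eta^1_k)\bigr)\,ds+o(1)=\sum_{k=1}^{bN^{2/3}}\bigl(\log H^{f^1}(a_1+\lambda,\eta^1_k)-\log H^{f^1}(a_1,\eta^1_k)\bigr)+o(1),
\]
the identity being \eqref{equation partial H}. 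This last sum has $bN^{2/3}$ i.i.d.\ summands with mean $\asymp b\rho N^{1/3}$ and, via \eqref{CSS 1} and Remark \ref{remark - Mellin consequences}, variance $O(b\rho^2)$, hence is $\leq C_b\rho N^{1/3}$ off an event of probability $o(1)$. Altogether $\log\hat Z-\log Z\leq C_b\rho N^{1/3}$ off an event of probability $\leq 2C_2b^{-3}+o(1)$, and choosing $\rho\geq 8C_b/\kappa$ makes $C_b\rho N^{1/3}\leq\tfrac12\Delta_N$.

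To conclude, note $\overline{\log Z}=\overline{\log\hat Z}+\bigl(\E[\log\hat Z]-\E[\log Z]\bigr)-\bigl(\log\hat Z-\log Z\bigr)$. On the intersection of $\{\log\hat Z-\log Z\leq\tfrac12\Delta_N\}$ and $\{\overline{\log\hat Z}\geq-\tfrac14\Delta_N\}$ this gives $\overline{\log Z}\geq-\tfrac14\Delta_N+\Delta_N-\tfrac12\Delta_N=\tfrac14\Delta_N=\tfrac1{16}\kappa\rho^2 N^{1/3}$. The first event has probability $\geq1-2C_2b^{-3}-o(1)$ by the previous paragraph; the second has probability $\geq1-\Var[\log\hat Z]/(\Delta_N/4)^2\geq1-256C\,\kappa^{-2}\rho^{-4}$ by Chebyshev and the variance upper bound of Theorem \ref{theorem stationary variance bounds} applied to $\hat Z$. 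Choosing $b$ large, then $\rho$ large, then $N_0$ large makes the three error probabilities sum to less than $1$, so the intersection has probability at least some $\epsilon_0>0$, proving the proposition with $c_0=\tfrac1{16}\kappa\rho^2$. The main obstacle is precisely obtaining the \emph{high-probability} bound on $\log\hat Z-\log Z$ at scale $N^{1/3}$: its expectation is itself of order $\rho^2 N^{1/3}$, comparable to $\Delta_N$, so a Markov bound is useless, and one genuinely needs the quenched exit-point tails of Corollary \ref{corollary quenched tails} to truncate the sum at scale $bN^{2/3}$ and reduce $\log\hat Z-\log Z$ to a concentrated i.i.d.\ sum of the correct order — together with the sign information $\kappa>0$ supplied by Appendix \ref{appendix psi}.
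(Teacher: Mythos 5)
Your proposal captures the right heuristic — perturb the parameters by $\lambda=\rho N^{-1/3}$, exploit $\E\log\hat Z-\E\log Z\asymp\kappa\rho^2 N^{1/3}$ with $\kappa>0$, and control the pointwise difference $\log\hat Z-\log Z$ via the coupling — and the paper's proof is indeed driven by these same quantities. But your route to the final estimate is genuinely different from the paper's, and it contains a gap that the paper's extra machinery is specifically designed to close.

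The gap is in the application of Corollary \ref{corollary quenched tails} to $\hat Z$ at $(m,n)$. When you pass to the perturbed parameters $(\tilde a_1,\tilde a_2)=(a_1+\lambda,a_2-\lambda)$, the endpoint $(m,n)$ is off-characteristic for those parameters by an amount of order $\rho N^{2/3}$, so it satisfies \eqref{direction} only with constant $\gamma'=\gamma+O(\rho)$. The constants in Proposition \ref{proposition stationary ub} and Corollary \ref{corollary quenched tails} depend on $\gamma'$; in particular the threshold $b_0$ is $\asymp 1\vee C_1\gamma'\asymp\rho$ for $\rho$ large. So the truncation scale must satisfy $b\gtrsim\rho$. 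Indeed, in the $\tilde a$-environment the quenched mass concentrates on paths with $t_1\asymp\rho N^{2/3}$, so $Q_{m,n}^{(\tilde a_1,\tilde a_2)}(t_1\geq bN^{2/3})$ is simply \emph{not} small unless $b$ is at least of order $\rho$. On the other hand, after truncation your main term is $\approx b\rho\psi_1^{f^1}(a_1)N^{1/3}$, so to have this smaller than $\tfrac12\Delta_N\asymp\kappa\rho^2N^{1/3}$ you need $\rho\gtrsim b$. These two requirements close only if a particular combination of constants (roughly $C_1\cdot\sup|\psi_2|\cdot\psi_1^{f^1}(a_1)/\kappa$) is less than $1$, which you do not establish and which is not evidently true for all four models and all parameters — $\kappa$ can be small. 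The remark after Proposition \ref{proposition stationary ub} that you invoke gives uniformity in the perturbation $\lambda$ for a \emph{fixed} $\gamma$; it does not absorb the growth of $\gamma'$ with $\rho$.

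The paper avoids this circularity with two devices you do not use. First, it introduces $(\tilde m,\tilde n)$ (and in Case 2, $(\overline m,\overline n)$), chosen so that they lie in the characteristic direction for $(\tilde a_1,\tilde a_2)$ with a constant $\gamma_0$ that is bounded independently of the perturbation size, and then uses Lemma \ref{lemma sub-box} to transfer the exit-point concentration from the sub-box at $(\tilde m,\tilde n)$ back to $(m,n)$; this makes the quenched concentration on $t_1\in[c_1bN^{2/3},C_3bN^{2/3}]$ legitimate uniformly in $b$. Second, rather than a pathwise high-probability bound on $\log\hat Z-\log Z$, it changes measure: it defines a hybrid environment $\widecheck\omega$ that agrees with $\P^{(\tilde a_1,\tilde a_2)}$ only on the first $\floor{C_3bN^{2/3}}$ horizontal boundary weights and with $\P^{(a_1,a_2)}$ everywhere else, computes the Radon–Nikodym derivative $g=d\widecheck\P/d\P^{(a_1,a_2)}$ explicitly (yielding $\E^{(a_1,a_2)}[g^2]^{1/2}\leq e^{c_5b^3}$), and closes with Cauchy–Schwarz $\widecheck\P(D)\leq\E^{(a_1,a_2)}[g^2]^{1/2}\P^{(a_1,a_2)}(D)^{1/2}$. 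This gives a lower bound $\epsilon_0=(1-C/b^3)^2e^{-2c_5b^3}$ for a \emph{fixed} admissible $b$, with no need to drive any error term to zero by a competing choice of scales. If you want to keep your more direct pathwise-coupling route, you would need at minimum to incorporate the $(\tilde m,\tilde n)$ recentering and Lemma \ref{lemma sub-box} to make the quenched exit-point bound applicable at the correct off-characteristic scale.
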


From this proposition we can obtain the lower bound of Theorem \ref{theorem stationary variance bounds}.
\begin{align*}
\Var [\log Z_{m,n}] &\geq \E\big[(\overline{\log Z_{m,n}})^2 : \overline{\log Z_{m,n}} \geq c_0N^{1/3}\big]\\
&\geq \P\big(\overline{\log Z_{m,n}} \geq c_0 N^{1/3}\big) (c_0 N^{1/3})^2  \\
&\geq \epsilon_0 c_0^2 N^{2/3}.
\end{align*}

\begin{proof}[Proof of Proposition \ref{proposition lower bound}]
Let $\epsilon>0$ be small enough such that for all $|\lambda|\leq \epsilon$, $a_1+\lambda\in D(M_{f^1})$ and $a_2-\lambda\in D(M_{f^2})$. Define
\[ \tilde{m} = \floor{m \frac{\psi_1^{f^2}(a_2-\lambda)}{\psi_1^{f^2}(a_2)}}, \qquad \tilde{n} = \floor{n \frac{\psi_1^{f^1}(a_1+\lambda)}{\psi_1^{f^1}(a_1)}}.\]
Taking Taylor expansions gives 
\begin{equation}
\begin{split}\label{equation m-m tilde}
m-\tilde{m} &= \lambda\frac{\psi_2^{f^2}(a_2)}{\psi_1^{f^2}(a_2)} m + o(\lambda)m \\
\tilde{n}-n &= \lambda\frac{\psi_2^{f^1}(a_1)}{\psi_1^{f^1}(a_1)} n + o(\lambda)n.
\end{split}
\end{equation}
Let $b$ be a fixed positive constant which will be determined through the course of the proof. Then there exists $N_0\in \N$ such that for all $N\geq N_0$, $bN^{-1/3}\leq \epsilon$.  Then with $\lambda= bN^{-1/3}$, the sequence $(\t{m},\t{n})$ satisfies
\[
|\tilde{m} - N \psi_1^{f^2}(a_2-\lambda)| \vee |\tilde{n} -N\psi_1^{f^1}(a_1+\lambda)| \leq \gamma_0 N^{2/3} 
\]
for some positive constant $\gamma_0$. By Table \ref{table psi_n^f} and \eqref{equation psi_n integral rep} in the Appendix, in each of the four basic beta-gamma models, either
$\psi_2^{f^1}(a_1)$ and $\psi_2^{f^2}(a_2)$ are both positive (inverse-beta model for certain choices of parameters and inverse-gamma model for all choices of parameters), $\psi_2^{f^1}(a_1)$ is negative and $\psi_2^{f^2}(a_2)$ is positive (gamma and beta models), or $\psi_2^{f^1}(a_1)$ is positive and $\psi_2^{f^2}(a_2)$ is non-positive  (inverse-beta model with the remaining choices of parameters). By flipping the $x$ and $y$ axes in the second case, we only need to consider the first and third cases.

For the case where $\psi_2^{f^1}$ and $\psi_2^{f^2}$ are both positive define $A_N = m-\tilde{m}$ and $B_N = \tilde{n}-n$. This case is illustrated in Figure \ref{figure case 1}. By \eqref{equation m-m tilde} and increasing $N_0$ if necessary, there exist positive constants $c_1, c_2, C_1, C_2$ such that for $N\geq N_0$,
\begin{align*}
c_1 bN^{2/3} \leq A_N \leq C_1bN^{2/3},\\
c_2 b N^{2/3}\leq B_N \leq C_2 b N^{2/3}.
\end{align*}
\begin{figure}
  \centering
  \begin{tikzpicture}
    \draw (0,0) -- (4,0) -- (4, 3) --(0,3) -- (0,0);
    \node [below] at (4,-.1) {$m$};
    \node [left] at (0,3) {$n$};
    \draw (0,0) -- (3,0) -- (3,3.5) -- (0,3.5) -- (0,0);
    \node [below] at (3,0) {$\tilde{m}$};
    \node [left] at (0,3.5) {$\tilde{n}$};
  \end{tikzpicture}
  \caption{Case 1: $\psi_2^{f_1}$ and $\psi_2^{f_2}$ are both positive.}
  \label{figure case 1}
\end{figure}
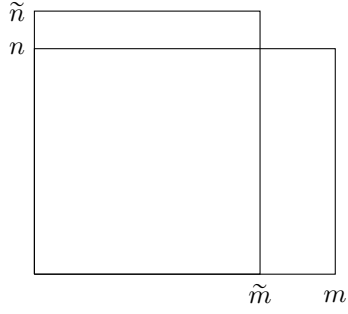

In the case where $\psi_2^{f^1}(a_1)>0$ and $\psi_2^{f^2}(a_2)\leq 0$ we define $c = \frac{1}{2}(\frac {m}{\tilde{m}} + \frac{n}{\tilde{n}})$ and let $\overline{m} = c\tilde{m}$, $\overline{n} = c\tilde{n}$. This case is illustrated in Figure \ref{figure case 2}. This $(\overline{m}, \overline{n})$ will satisfy
\[
|\overline{m} - M \psi_1^{f^2}(a_2-\lambda)| \vee |\overline{n} -M\psi_1^{f^1}(a_1+\lambda)| \leq \gamma_0 c^{1/3} M^{2/3} 
\]
where $M = cN$. A Taylor expansion gives
\[ c = 1+ \left(\frac{\psi_2^{f^2}(a_2)}{\psi_1^{f^2}(a_2)} - \frac{\psi_2^{f^1}(a_1)}{\psi_1^{f^1}(a_1)} \right) \frac{\lambda}{2} + o(\lambda)\]
and thus
\begin{align*}
m-\overline{m}  &= \frac{\lambda}{2} \left(\frac{\psi_2^{f^2}(a_2)}{\psi_1^{f^2}(a_2)} + \frac{\psi_2^{f^1}(a_1)}{\psi_1^{f^1}(a_1)} \right) m + o(N^{2/3}), \\
\overline{n} - n &= \frac{\lambda}{2} \left(\frac{\psi_2^{f^2}(a_2)}{\psi_1^{f^2}(a_2)} + \frac{\psi_2^{f^1}(a_1)}{\psi_1^{f^1}(a_1)} \right) n + o(N^{2/3}).
\end{align*}
The quantity $\frac{\psi_2^{f^2}(a_2)}{\psi_1^{f^2}(a_2)} + \frac{\psi_2^{f^1}(a_1)}{\psi_1^{f^1}(a_1)}$ is positive since $\psi_1^{f^1}$ and $\psi_1^{f^2}$ are both positive and $\psi_1^{f^2}(a_2)\psi_2^{f^1}(a_1)+\psi_1^{f^1}(a_1)\psi_2^{f^2}(a_2)>0$ by Lemma \ref{lemma -psi1 psi2 combo is positive} in the Appendix.  Letting $\overline{A} = m-\overline{m}$ and $\overline{B} = \overline{n}-n$, there exist positive constants $c_1', c_2', C_1', C_2'$ such that
\begin{align*}
c_1'bM^{2/3}\leq &\overline{A}_M \leq C_1' bM^{2/3}, \\
c_2'bM^{2/3} \leq &\overline{B}_M \leq C_2' bM^{2/3}.
\end{align*}
\begin{figure}
  \centering
  \begin{tikzpicture}
    \draw (0,0) -- (3.5,0) -- (3.5,2.75) -- (0,2.75) -- (0,0);
    \node [below] at (3.5,-.1) {$m$};
    \node [left] at (0,2.75) {$n$};
    \draw (0,0) -- (4,0) -- (4, 4.5) --(0,4.5) -- (0,0);
    \node [below] at (4,0) {$\tilde{m}$};
    \node [left] at (0,4.5) {$\tilde{n}$};
    \draw (0,0) -- (2.97,0) -- (2.97,3.34) -- (0,3.34) -- (0,0);
    \node [below] at (2.97, -.05) {$\overline{m}$};
    \node [left] at (0,3.34) {$\overline{n}$};
  \end{tikzpicture}
  \caption{Case 2: $\psi_2^{f_1}>0$ and $\psi_2^{f_2}\leq 0$.}
  \label{figure case 2}
\end{figure}
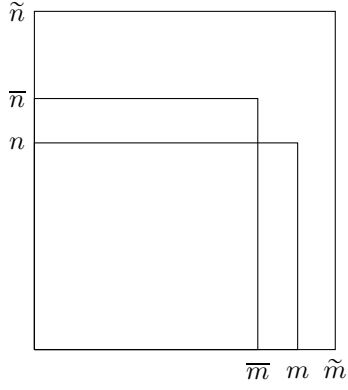
Recall that $\P^{(a_1,a_2)}$ is used to denote the probability measure on the polymer environment with parameters $a_1$ and $a_2$. Let $(\tilde{a}_1, \tilde{a}_2) =(a_1 + \lambda, a_2 - \lambda)$. Our goal is to show that
\[\mathbb{P}^{(a_1,a_2)} \big( \log Z_{m,n} \geq \E[\log Z_{m,n}] + c_0 N^{1/3}\big)\geq \epsilon_0.\]
We will do so by making estimates using the $(\tilde{a}_1, \tilde{a}_2)$ environment and then use a coupling of the two environments to transfer the results to the $(a_1,a_2)$ environment.

We would first like to show that in the $(\tilde{a}_1, \tilde{a}_2)$ environment, with high probability the quenched probability gives most of the weight to paths which exit the $x$-axis at a point of order $bN^{2/3}$. That is:  there exist constants $C_3,C$ such that, given any $\varepsilon>0$,  
\begin{align}\label{LB-estimate1}
\P^{(\tilde{a}_1, \tilde{a}_2)} \big\{ Q_{m,n} (c_1bN^{2/3}\leq t_1\leq C_3bN^{2/3}) \geq 1-\varepsilon\big\} &\geq 1-\frac{C}{b^3}
\end{align}
holds for all sufficiently large $N$.

We start by using Lemma \ref{lemma sub-box} to relate an upper bound on $t_1$ to a lower bound on $t_2$.
\begin{align*}
Q_{m,n}(t_1\leq A_N) \overset{d}{=} Q_{m,\tilde{n}}(v_1(B_N)\leq A_N) = Q_{m,\tilde{n}}(w_1(A_N)>B_N) \overset{d}{=} Q_{\tilde{m},\tilde{n}}(t_2>B_N).
\end{align*}
Using this and Corollary \ref{corollary quenched tails}, there exists $\delta>0$ such that
\begin{align*}
\P^{(\tilde{a}_1, \tilde{a}_2)}\big\{Q_{m,n}(t_1 > c_1bN^{2/3}) \geq 1-e^{-\frac{\delta}{N} B_N^2} \big\}  &\geq 
\P^{(\tilde{a}_1, \tilde{a}_2)}\big\{Q_{m,n}(t_1 > A_N) \geq 1-e^{-\frac{\delta}{N} B_N^2} \big\} \\
&= \P^{(\tilde{a}_1, \tilde{a}_2)}\big\{Q_{m,n}(t_1\leq A_N)\leq e^{-\frac{\delta}{N} B_N^2} \big\}\\
&=\P^{(\tilde{a}_1, \tilde{a}_2)}\big\{Q_{\tilde{m},\tilde{n}}(t_2> B_N)\leq e^{-\frac{\delta}{N} B_N^2} \big\}\\
&\geq 1-Cb^{-3}.
\end{align*}
This implies that
\begin{align*}
\P^{(\tilde{a}_1, \tilde{a}_2)}\big\{Q_{m,n}(t_1 \leq c_1 bN^{2/3})\geq e^{-\frac{\delta}{N} B_N^2} \big\} &\leq Cb^{-3}.
\end{align*}
Applying the upper bound directly for $C_3>C_2$, we obtain
\begin{align*}
\P^{(\tilde{a}_1, \tilde{a}_2)}\big\{ Q_{m,n} (t_1>C_3bN^{2/3})\geq e^{-\frac{\delta}{N} B_N^2} \big\} \leq Cb^{-3}
\end{align*}
for another constant $C$. Taking a union bound we put the two bounds together and get
\begin{align*}
\P^{(\tilde{a}_1, \tilde{a}_2)}\big\{Q_{m,n} (c_1 bN^{2/3} \leq t_1\leq C_3bN^{2/3})\geq 1- 2e^{-\frac{\delta}{N} B_N^2} \big\} \geq 1- Cb^{-3}.
\end{align*}
Taking $N$ large enough, we get \eqref{LB-estimate1}.

The argument for the case where we use $(\overline{m},\overline{n})$ and $\overline{A}, \overline{B}$ is unchanged, with the exception of using the lower bound in terms of the scaling parameter $M$ rather than $N$. This difference can be absorbed into the constants. 

In order to make use of this bound for the system with the original $(a_1,a_2)$ environment we create a new measure $\widecheck{\P}$ which has both $a_1$ and $\tilde{a}_1$ distributed weights along the $x$-axis and estimate the Radon-Nikodym derivative of the $(a_1, a_2)$ environment with respect to this new environment. 

Let $\widecheck{\omega}$ denote the environment that has the same weights as the $(a_1,a_2)$ environment except for the weights $R^1_{i,0}$ for $1\leq i\leq \floor{C_3bN^{2/3}}$, which will be distributed with parameter $\tilde{a}_1$. Let $\widecheck{\P}$ denote the probability measure on this environment. Then for each path $\pi$ with $c_1bN^{2/3}\leq t_1\leq C_3bN^{2/3}$, the weight of the path in the $(\tilde{a}_1, \tilde{a}_2)$ environment and the weight of the path in the $\widecheck{\omega}$ environment agree. Thus 
 \begin{align}
 Z_{m,n}(c_1bN^{2/3}\leq t_1\leq C_3bN^{2/3}) \label{LB-pp}
 \end{align}
is the same in distribution under $\P^{(\tilde{a}_1, \tilde{a}_2)}$ and $\widecheck{\P}$, where $Z_{m,n}(A):= \sum_{x_\centerdot \in A}\prod_{k=1}^{m+n}\omega_{(x_{k-1},x_k)}$.
We can now make use of the bound \eqref{LB-estimate1}. 

Using a third-order Taylor expansion, the same series of calculations which leads to inequality \eqref{quantity one} in the proof of Proposition \ref{proposition stationary ub} gives the existence of a constant $C'>0$ such that:
\begin{equation}
\begin{split}\label{LB-p0}
\E^{(\tilde{a}_1, \tilde{a}_2)}[\log Z_{m,n}]- \E^{(a_1,a_2)}[\log Z_{m,n}] &= m \left(\Psi_0^{f_1}(\tilde{a}_1) -\Psi_0^{f_1}(a_1) \right) + n\left(\Psi_0^{f_2}(\tilde{a}_2) - \Psi_0^{f_2}(a_2)\right)\\
&\geq  -\gamma bN^{1/3} C'+ 4 c_4 b^2 N^{1/3}-b^3 C' \\
&\geq c_4 b^2 N^{1/3}
\end{split}
\end{equation}
where $c_4:=\frac{1}{8}\Bigl( \psi_1^{f^2}(a_2)\psi_2^{f^1}(a_1)+\psi_1^{f^1}(a_1)\psi_2^{f^2}(a_2) \Bigr)$ is positive by Lemma \ref{lemma -psi1 psi2 combo is positive} in the Appendix. The last inequality is obtained by first fixing $b$ large enough then increasing $N_0$ if necessary. 

We now split the probability
\begin{align}
& \P^{(\tilde{a}_1, \tilde{a}_2)}\big\{ Q_{m,n} (c_1 bN^{2/3} \leq t_1\leq C_3bN^{2/3})\geq 1- \varepsilon \big\} \nonumber\\
&= \P^{(\tilde{a}_1, \tilde{a}_2)} \left\{ \frac{1}{Z_{m,n}} Z_{m,n} ( c_1bN^{2/3}\leq t_1\leq C_3bN^{2/3} )  \geq 1- \varepsilon \right\}\nonumber\\
&\leq \widecheck{\P}\left\{ Z_{m,n} (c_1bN^{2/3}\leq t_1\leq C_3bN^{2/3}) \geq (1-\varepsilon) e^{\E^{(a_1,a_2)}[\log Z_{m,n}]+\frac{1}{2} c_4 b^2 N^{1/3}}\right\} \label{LB-p1} \\
&\qquad + \P^{(\tilde{a}_1, \tilde{a}_2)}\left\{ Z_{m,n} \leq e^{\E^{(\tilde{a}_1, \tilde{a}_2)}[\log Z_{m,n}]-\frac{1}{2} c_4 b^2 N^{1/3}}\right\}. \label{LB-p2}
\end{align}
Inequality \eqref{LB-p1} comes from \eqref{LB-pp} and \eqref{LB-p0}.

For \eqref{LB-p2} we can use Chebyshev's inequality then the upper bound of the variance to get
\begin{align*}
\eqref{LB-p2} \leq \frac{C}{b^3}.
\end{align*}
Thus $\eqref{LB-p1}\geq 1-\frac{C}{b^3}$ for some new constant $C$. Let $g$ be the Radon-Nikodym derivative $d\widecheck{\P}/ d\P^{(a_1,a_2)}$. Recall that the distributions differ only on the weights along the $x$-axis up until site $\floor{C_3bN^{2/3}}$. Thus
\[ g(\omega) = \left(\frac{M_{f^1}(a_1)}{M_{f^1}(\tilde{a}_1)}\right)^{\floor{C_3bN^{2/3}}} \prod_{i=1}^{\floor{C_3bN^{2/3}}} \omega_{i,0}^{\lambda}.  \]
We can evaluate $\E^{(a_1,a_2)}[g^2]$ explicitly. Increasing $N_0$, if necessary, so that $2\lambda \leq \epsilon$,
\begin{align*}
\mathbb{E}^{(a_1,a_2)}[\omega_{i,0}^{2\lambda}]=\frac{1}{M_{f^1}(a_1)}\int_0^\infty x^{2\lambda}x^{a_1-1}f^1(x)dx=\frac{M_{f^1}(a_1+2\lam)}{M_{f^1}(a_1)}.
\end{align*}
Now
\begin{align*}
\mathbb{E}^{(a_1,a_2)}[g^2] & =\left(\frac{M_{f^1}(a_1)}{M_{f^1}(\tilde{a}_1)}\right)^{2\floor{C_3bN^{2/3}}} \prod_{i=1}^{\floor{C_3bN^{2/3}}} \mathbb{E}^{(a_1,a_2)}[\omega_{i,0}^{2\lambda}]\\
&= \left(\frac{M_{f^1}(a_1)M_{f^1}(a_1+2\lam)}{M_{f^1}(a_1+\lambda)^2}\right)^{\floor{C_3bN^{2/3}}}.
\end{align*}
Taking logarithms of both sides,
\begin{align*}
\log\mathbb{E}^{(a_1,a_2)}[g^2]=\floor{C_3 b N^{2/3}}\Bigl (\log M_{f^1}(a_1)+\log M_{f^1}(a_1+2bN^{-1/3})-2\log M_{f^1}(a_1+bN^{-1/3}) \Bigr)
\end{align*}  
Recall that $\frac{\p^2}{\p a^2} \log M_{f^1}(a)=\psi^{f^1}_1(a)>0$. Then
\begin{align*}
\lim_{N\rightarrow \infty} \log\mathbb{E}^{(a_1,a_2)}[g^2]&=C_3b \lim_{N\rightarrow \infty}
\frac{\log M_{f^1}(a_1)+\log M_{f^1}(a_1+2bN^{-1/3})-2\log M_{f^1}(a_1+bN^{-1/3})}{N^{-2/3}}\\
&=C_3b^2 \lim_{N\rightarrow \infty} \frac{\psi^{f^1}_0(a_1+2bN^{-1/3})-\psi^{f^1}_0(a_1+bN^{-1/3})}{N^{-1/3}}\\
&=C_3b^3 \psi_1^{f^1}(a_1)>0
\end{align*}
Increasing $N_0$ if necessary, there is some positive constant $c_5$ such that if $N\geq N_0$,
\[ \E^{(a_1,a_2)}[g^2]^{1/2} \leq e^{c_5 b^3}. \] 
Defining the event
\[ D=\left\{Z_{m,n} (c_1bN^{2/3}\leq t_1\leq C_3bN^{2/3}) \geq (1-\epsilon) e^{\E^{(a_1,a_2)}[\log Z_{m,n}]+\frac{1}{2} c_4 b^2 N^{1/3}}\right\},\] 
we get
\begin{align*}
1-\frac{C}{b^3}\leq \eqref{LB-p1} &= \widecheck{\P}(D)\\
&= \E^{(a_1,a_2)}[g \ind_D]\\
&\leq \big(\E^{(a_1,a_2)}[g^2]\big)^{1/2} \big(\P^{(a_1,a_2)}(D)\big)^{1/2}\\
&\leq e^{c_5 b^3} \big(\P^{(a_1,a_2)}(D)\big)^{1/2}.
\end{align*}
Thus
\begin{align*}
  \epsilon_0 := (1-\frac{C}{b^3})^2 e^{-2c_5 b^3}\leq \P^{(a_1,a_2)}(D).
\end{align*}
Finally we have that
\begin{align*}
\epsilon_0 \leq \P^{(a_1,a_2)}(D) &\leq \P^{(a_1,a_2)}\left(Z_{m,n}\geq (1-\varepsilon) e^{\E^{(a_1,a_2)}[\log Z_{m,n}]+\frac{1}{2} c_4 b^2 N^{1/3}}\right)\\
&= \P^{(a_1,a_2)}\big(\log Z_{m,n} \geq \log(1-\varepsilon) + \E^{(a_1,a_2)}[\log Z_{m,n}] + \frac{1}{2}c_4 b^2 N^{1/3}\big)\\
&\leq \P^{(a_1,a_2)}\big(\log Z_{m,n} \geq \E^{(a_1,a_2)}[\log Z_{m,n}] + c_0N^{1/3}\big).
\end{align*}
Increasing $N_0$ if necessary and taking $c_0=\frac{1}{4}c_4 b^2$ the final inequality holds for all $N\geq N_0$.
This concludes the proof.
\end{proof}

We can use the variance lower bound to obtain a lower bound on the exit points of the path from the horizontal and vertical axes.
\begin{corollary}\label{corollary pf lower bound}
Assume that the polymer environment is distributed as in \eqref{polymer environment distribution} and the sequence $(m,n)=(m_N,n_N)_{N=1}^{\infty}$ satisfies \eqref{direction} for some positive constant $\gamma$. Then there exist positive constants $c_0,\,c_1,\,N_0$ depending only on $(a_1,a_2)$, $\beta$ and $\gamma$ such that for all $N\geq N_0$,
\[
c_0\leq P_{m,n}(t_1>c_1N^{2/3} \text{ or } t_2>c_1 N^{2/3}).
\]
\end{corollary}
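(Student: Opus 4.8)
The plan is to average the two variance formulas of Proposition~\ref{proposition variance formula}, feed in the variance lower bound established above, and then run a second moment argument on the exit points $t_1,t_2$.

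First I would add \eqref{equation var formula 1} and \eqref{equation var formula 2}: the $\pm m\Var[\log R^1]$ and $\pm n\Var[\log R^2]$ terms cancel, leaving the identity
\[
\Var[\log Z_{m,n}] \;=\; E_{m,n}\Bigl[\sum_{i=1}^{t_1}L_{R^1}(R^1_{i,0})\Bigr] \;+\; E_{m,n}\Bigl[\sum_{j=1}^{t_2}L_{R^2}(R^2_{0,j})\Bigr].
\]
By the lower bound half of Theorem~\ref{theorem stationary variance bounds} (proved above via Proposition~\ref{proposition lower bound}), the left side is $\ge cN^{2/3}$ once $N\ge N_0$, while Lemma~\ref{E[t] lemma} bounds the two terms on the right by $C(E_{m,n}[t_1]+1)$ and $C(E_{m,n}[t_2]+1)$ respectively. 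Combining these, and enlarging $N_0$ to swallow the additive constants, gives $E_{m,n}[t_1]+E_{m,n}[t_2]\ge c'N^{2/3}$ for all $N\ge N_0$.

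Next I would upgrade this first-moment bound to a probability bound via the second moment method. Write $S:=t_1+t_2$, so $E_{m,n}[S]\ge c'N^{2/3}$. Lemma~\ref{lemma b cubed} applied with $p=2$ (admissible since $2<3$) gives $E_{m,n}[t_j^2]\le C''N^{4/3}$ for $j=1,2$, hence $E_{m,n}[S^2]\le 2\bigl(E_{m,n}[t_1^2]+E_{m,n}[t_2^2]\bigr)\le 4C''N^{4/3}$. The Paley--Zygmund inequality then yields
\[
P_{m,n}\bigl(S> \tfrac12 E_{m,n}[S]\bigr)\;\ge\;\tfrac14\,\frac{(E_{m,n}[S])^2}{E_{m,n}[S^2]}\;\ge\;\frac{(c')^2}{16\,C''}\;=:\;c_0>0.
\]
On the event $\{S>\tfrac12 E_{m,n}[S]\}$ we have $t_1+t_2>\tfrac{c'}{2}N^{2/3}$, so $\max(t_1,t_2)>\tfrac{c'}{4}N^{2/3}> c_1N^{2/3}$ with $c_1:=c'/8$; thus this event is contained in $\{t_1>c_1N^{2/3}\}\cup\{t_2>c_1N^{2/3}\}$, which is the claim. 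All the constants depend only on $(a_1,a_2)$, $\beta$, and $\gamma$, inheriting this from Theorem~\ref{theorem stationary variance bounds}, Lemma~\ref{E[t] lemma}, and Lemma~\ref{lemma b cubed}.

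I do not expect a serious obstacle, since the proof is essentially bookkeeping on top of three earlier results. The one genuinely new ingredient is the identity obtained by averaging the two variance formulas: this is precisely what converts the already-proved variance lower bound into simultaneous control of \emph{both} exit points, rather than just their difference. The only point needing a bit of care is choosing $N_0$ large enough that $c'N^{2/3}$ dominates the $O(1)$ terms from Lemma~\ref{E[t] lemma} and that the variance lower bound is in force.
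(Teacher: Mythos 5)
Your proof is correct and follows essentially the same route as the paper: sum (or average) the two variance formulas, apply Lemma \ref{E[t] lemma} and the lower bound of Theorem \ref{theorem stationary variance bounds} to get a first-moment lower bound on $t_1+t_2$, then upgrade to a tail bound via a second-moment argument using the $p=2$ case of Lemma \ref{lemma b cubed}. The only cosmetic difference is that you invoke Paley--Zygmund while the paper carries out the equivalent truncation-plus-Cauchy--Schwarz calculation by hand.
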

\begin{proof}
Averaging \eqref{equation var formula 1} and \eqref{equation var formula 2} of Proposition \ref{proposition variance formula} then applying Lemma \ref{E[t] lemma} followed by the lower bound of Theorem \ref{theorem stationary variance bounds} gives the existence of positive constants $c,\, C,\, N_0$ such that for all $N\geq N_0$
\begin{align*}
cN^{2/3}\leq\Var[\log Z_{m,n}] &= E_{m,n}[\sum_{i=1}^{t_1} L_{R^1}(R^1_{i,0})] + E_{m,n}[\sum_{j=1}^{t_2} L_{R^2}(R^2_{0,j})]\\
&\leq C(E_{m,n}[t_1+t_2] + 2).
\end{align*}
Letting $c_1:=c/6C$ and increasing $N_0$ if necessary followed by an application of the Cauchy-Schwartz inequality along with Lemma \ref{lemma b cubed} gives
\begin{align*}
3c_1\leq E_{m,n}[\frac{t_1+t_2}{N^{2/3}}]&\leq 2c_1 + E_{m,n}[\frac{t_1+t_2}{N^{2/3}}:t_1+t_2> 2c_1 N^{2/3}]\\
&\leq 2c_1 + C' P_{m,n}(t_1+t_2> 2c_1 N^{2/3})^{\frac{1}{2}}
\end{align*}
for some positive constant $C'$.  Thus
\[
c_0:=(\frac{c_1}{C'})^2\leq P_{m,n}(t_1+t_2>2c_1 N^{2/3})\leq P_{m,n}(t_1>c_1 N^{2/3} \text{ or } t_2 >c_1 N^{2/3}),
\]
which completes the proof.
\end{proof}
We now prove the path fluctuation lower bound.
\begin{proof}[Proof of \eqref{pf-lower bound}]
If $\tau=0$, this reduces to Corollary \ref{corollary pf lower bound}.  If $\tau \in (0,1)$ put $(k,l)=(\floor{\tau m},\floor{\tau n})$.  Then the sequence $(m-k,n-l)$ satisfies \eqref{direction} with a new scaling parameter $M=(1-\tau)N$.  By the down-right property and Lemma \ref{lemma sub-box}
\begin{align*}
Q_{m-k,n-l}(t_1>u \text{ or } t_2>u)&\overset{d}{=} Q_{m,n}^{(k,l)}(t_1^{(k,l)}>u \text{ or } t_2^{(k,l)}>u)\\
&=Q_{m,n}(v_1(l)>k+u \text{ or } w_1(k)>l+u)\\
&\leq Q_{m,n}(v_1(l)>\tau m+\frac{u}{2} \text{ or } w_1(k)>\tau n+\frac{u}{2})
\end{align*}
provided that $u\geq 2$.  Corollary \ref{corollary pf lower bound} applied to the sequence $(m-k,n-l)$ completes the proof.
\end{proof}

\begin{appendices}
\section{Verification of Hypothesis \ref{hypothesis CSS}}\label{appendix CSS}

\begin{lemma}\label{lemma CSS inversion}
If the function $f$ satisfies the conditions of Hypothesis \ref{hypothesis CSS} and $g(x):=f(\frac{1}{x})$ for $x\in (0,\infty)$, then $g$ also satisfies the conditions of Hypothesis \ref{hypothesis CSS}.
\end{lemma}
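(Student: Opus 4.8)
The plan is to reduce the verification to two ``conjugation identities'' between the objects attached to $g$ and those attached to $f$:
\[
L^g(b,y) = L^f(-b,\tfrac1y) \quad\text{for all } y\in\supp(g),\qquad
H^g(b,p) = \tfrac{1}{H^f(-b,1-p)} \quad\text{for a.e.\ } p\in(0,1),
\]
valid for every $b\in D(M_g)$. Granting these, Hypothesis \ref{hypothesis CSS} for $g$ follows from the same hypothesis for $f$ applied on the compact set $-K$, for any compact $K\subset D(M_g)$; recall $D(M_g)=-D(M_f)$ by Remark \ref{remark - Mellin inversion}, so $-K\subset D(M_f)$ is compact. Before the identities one disposes of the ``soft'' requirements: $D(M_g)=-D(M_f)$ is non-empty; $\supp(g)=\{1/y:y\in\supp(f)\}$ is open because $y\mapsto 1/y$ is a homeomorphism of $(0,\infty)$; and $g=f\circ(y\mapsto 1/y)$ is differentiable on $\supp(g)$ by the chain rule, hence continuous there, so the differentiability discussion surrounding \eqref{equation partial H} applies verbatim to $g$ and $H^g,L^g$ are $C^1$.

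For the $L$-identity I would start from the integral representation \eqref{eq Lf integral form} for $L^g(b,y)$, substitute $t=1/s$ in $\int_0^y(\psi_0^g(b)-\log t)\,t^{b-1}g(t)\,dt$, and use $g(t)=f(1/t)$ together with $\psi_0^g(b)=-\psi_0^f(-b)$ (immediate from $\psi_0^f(a)=\E[\log X]$ for $X\sim m_f(a)$ and the equivalence $X\sim m_f(a)\iff X^{-1}\sim m_g(-a)$ of Remark \ref{remark - Mellin inversion}). The change of variables turns that integral into $-\int_{1/y}^{\infty}(\psi_0^f(-b)-\log s)\,s^{-b-1}f(s)\,ds$, and multiplying by the prefactor $y^{-b}/g(y)=(1/y)^{b}/f(1/y)$ produces exactly the second form in \eqref{eq Lf integral form} evaluated at $L^f(-b,1/y)$. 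For the $H$-identity I would use the quantile coupling noted after \eqref{eq def Hf}: if $\eta\sim\mathrm{Unif}(0,1)$ then $H^f(-b,\eta)\sim m_f(-b)$, so $1/H^f(-b,\eta)\sim m_g(b)$; since $1-\eta$ is also uniform, $H^g(b,1-\eta)\sim m_g(b)$, and because both $p\mapsto H^g(b,p)$ and $p\mapsto 1/H^f(-b,1-p)$ are monotone in $p$ they agree for a.e.\ $p$ (pointwise agreement can fail only at the at-most-countably many levels of flat parts of $F^f(-b,\cdot)$, which is irrelevant below).

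It then remains to read off the two bounds. Condition \eqref{CSS 1} for $g$ is $L^g(b,y)=L^f(-b,1/y)\le C(1+|\log(1/y)|)=C(1+|\log y|)$ for $b\in K$, $y\in\supp(g)$. For \eqref{CSS 2}, the two identities give $L^g(b,H^g(b,p))=L^f(-b,1/H^g(b,p))=L^f(-b,H^f(-b,1-p))$ for a.e.\ $p$; differentiating in $b$ and substituting $q=1-p$ yields $\int_0^1|\partial_b L^g(b,H^g(b,p))|\,dp=\int_0^1\bigl|\tfrac{\p}{\p a}L^f(a,H^f(a,q))\big|_{a=-b}\bigr|\,dq\le C$ by \eqref{CSS 2} for $f$ on $-K$. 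The one slightly delicate point, and the step I expect to cost the most words, is justifying this differentiation: the identity $L^g(b,H^g(b,p))=L^f(-b,H^f(-b,1-p))$ only holds for a.e.\ $p$ at each fixed $b$, so one intersects over a countable dense set of $b$'s and uses continuity in $b$ (both sides being $C^1$) to conclude that, for a.e.\ $p$, the identity holds on an entire $b$-neighborhood, whence the $b$-derivatives agree for a.e.\ $p$. Everything else is routine bookkeeping with the change of variables and with signs in the two forms of \eqref{eq Lf integral form}.
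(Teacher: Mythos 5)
Your proposal follows the same overall reduction as the paper: establish the two conjugation identities $L^g(b,y)=L^f(-b,1/y)$ and a relation between $H^g$ and $H^f$, then read off \eqref{CSS 1} and \eqref{CSS 2} from Hypothesis \ref{hypothesis CSS} applied to $f$ on the compact set $-K\subset D(M_f)$. Your proof of the $L$-identity (substitution $t\mapsto 1/s$ in \eqref{eq Lf integral form} together with $\psi_0^g(b)=-\psi_0^f(-b)$) is the same as the paper's. Where you diverge is the $H$-identity: you argue distributionally that $p\mapsto H^g(b,p)$ and $p\mapsto 1/H^f(-b,1-p)$ are both non-decreasing and push Lebesgue measure to $m_g(b)$, hence agree a.e., which then forces you into the extra densification step to justify differentiating in $b$ for \eqref{CSS 2}. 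The paper instead derives $H^g(a,1-p)=1/H^f(-a,p)$ for \emph{all} $p\in(0,1)$ by a direct CDF computation ($F^g(a,x)=1-F^f(-a,x^{-1})$ plus injectivity of $F^f(-a,\cdot)$ on $\supp(f)$), which makes the change of variables in the $\int_0^1|\p_a L^g(a,H^g(a,p))|\,dp$ integral immediate and avoids the a.e.\ bookkeeping entirely. Both routes are correct; the paper's CDF argument is cleaner because it sidesteps the measure-theoretic cleanup, while your distributional argument is perhaps more conceptual but costs you the extra paragraph you anticipated.
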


\begin{proof}
Note that $\supp(g)=\supp(f)^{-1}$.  Fix a compact $K\subset D(M_g)$ and let $a\in K$.  By parts (c) and (b) of Remark \ref{remark - Mellin inversion}, $\psi_0^g(a)=-\psi_0^f(-a)$ and $-K\subset D(M_f)$.  Thus there exists a positive constant $C$ depending only $-K$ such that for all $b\in -K$, \eqref{CSS 1} and \eqref{CSS 2} hold. It therefore suffices to show the following two relations hold:
\begin{align}
L^g(a,x)&=L^f(-a,\frac{1}{x}) \label{eq Lg rel to Lf} \qquad  \text{ for all } x\in \supp(g)\\
\int_0^1 \Big|\frac{\p}{\p a} L^g(a,H^g(a,p))\Big|dp &=\int_0^1 \Big|\frac{\p}{\p b} L^f(b,H^f(b,p))\Big|dp\label{eq int Lg to int Lf}
\end{align}
where the right hand side of \eqref{eq int Lg to int Lf} is evaluated at $b=-a$.

\eqref{eq Lg rel to Lf} can be proven by using $\psi_0^g(a)=-\psi_0^f(-a)$ and making the substitution $y\mapsto \frac{1}{y}$ in the first integral appearing in \eqref{eq Lf integral form}.  

\eqref{eq int Lg to int Lf} will now follow from \eqref{eq Lg rel to Lf} and
\[
H^g(a,1-p)=\frac{1}{H^f(-a,p)} \qquad \text{ for all } p\in (0,1).
\]
To see that this equality holds, let $X\sim m_g(a)$ and $x>0$.  Using part (a) of Remark \ref{remark - Mellin inversion}
\begin{align}
F^g(a,x)=\P(X\leq x)=\P(X^{-1}\geq x^{-1})=1-\P(X^{-1}<x^{-1})=1-F^f(-a,x^{-1}).\label{eq Fg to Ff}
\end{align}
Fix $p\in (0,1)$ and recall the definition of $H^{\bullet}$, \eqref{eq def Hf}. Note that $H^f(-a,p)$ and $H^g(a,1-p)$ lie in $\supp(f)$ and $\supp(g)=\supp(f)^{-1}$ respectively.  Plugging $x=H^g(a,1-p)$ into \eqref{eq Fg to Ff} gives
\[
1-p=F^g(a,H^g(a,1-p))=1-F^f\big(-a,\frac{1}{H^g(a,1-p)}\big).
\]
Rearranging yields 
\[
F^f\big(-a,\frac{1}{H^g(a,1-p)}\big)=p=F^f(-a,H^f(-a,p)).
\]
Since $x\mapsto F^f(-a,x)$ is one-to-one on $\supp(f)$ we have the desired result.
\end{proof}

\begin{lemma}\label{lemma f are CSS}
Each of the functions $f$ in Table \ref{table f} satisfy Hypothesis \ref{hypothesis CSS}.
\end{lemma}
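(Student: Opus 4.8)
\textbf{Proof plan for Lemma \ref{lemma f are CSS}.}
The plan is to reduce the verification to three representative functions and then carry out the estimates case by case, using the integral formula \eqref{eq Lf integral form} for $L^f$ together with \eqref{equation partial H}. The requirements that $D(M_f)$ be non-empty, that $f$ have open support, and that $f$ be differentiable on its support are immediate from the entries of Table \ref{table f}. For the two quantitative bounds \eqref{CSS 1} and \eqref{CSS 2}, first observe that the second and fourth functions of Table \ref{table f} are the inversions $x\mapsto f(1/x)$ of the first and third (cf.\ Remark \ref{remark - Mellin inversion}), so by Lemma \ref{lemma CSS inversion} it suffices to treat $f(x)=e^{-bx}$, $f(x)=(1-x)^{b-1}\ind_{\{0<x<1\}}$, and $f(x)=(x/(x+1))^{b}$. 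The fifth function is not the inversion of another entry, but $m_f(a)$ then has density $\propto x^{a+b-1}(1+x)^{-b}$, so one may equally work with $g(x)=(1+x)^{-b}$ after the parameter shift $a\mapsto a+b$, whichever form is more convenient. The point of the reduction is that in each of the three remaining cases $x(\log f)'(x)$ is an explicit rational function of $x$, which keeps the analysis concrete.

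For \eqref{CSS 1}, differentiating $x^{a}f(x)L^f(a,x)=\int_0^x(\psi_0^f(a)-\log y)y^{a-1}f(y)\,dy$ in $x$ gives the first-order relation
\[
x\,\p_x L^f(a,x)+\bigl(a+x(\log f)'(x)\bigr)L^f(a,x)=\psi_0^f(a)-\log x,
\]
which is equivalent to \eqref{eq Lf integral form} and is convenient for extracting the behavior of $L^f$ at the two ends of $\supp(f)$. Near the left endpoint the $-\log y$ term in the integral dominates and produces $L^f(a,x)=O(1+|\log x|)$. Near the right endpoint one uses that $\int_0^\infty(\psi_0^f(a)-\log y)y^{a-1}f(y)\,dy=\psi_0^f(a)M_f(a)-M_f'(a)=0$ (which is also why the two representations in \eqref{eq Lf integral form} agree), so that $L^f(a,x)=-x^{-a}f(x)^{-1}\int_x^\infty(\psi_0^f(a)-\log y)y^{a-1}f(y)\,dy$, and one checks that the vanishing of this tail integral beats the possible growth of $x^{-a}/f(x)$: by incomplete-gamma asymptotics for $f=e^{-bx}$, by incomplete-beta asymptotics for $f=(x/(x+1))^b$, and by a direct estimate near $x=1$ (where $1/f$ blows up when $b<1$) for $f=(1-x)^{b-1}\ind_{\{0<x<1\}}$. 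Between the endpoints $L^f$ is continuous, hence bounded on compact $x$-intervals. Since all asymptotic constants can be taken uniform for $a$ in a compact $K\subset D(M_f)$, this yields \eqref{CSS 1}.

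For \eqref{CSS 2}, combine \eqref{equation partial H}, which gives $\p_a H^f(a,p)=H^f(a,p)L^f(a,H^f(a,p))$, with the chain rule to obtain
\[
\frac{\p}{\p a}L^f\bigl(a,H^f(a,p)\bigr)=\Bigl(\p_a L^f(a,x)+x\,L^f(a,x)\,\p_x L^f(a,x)\Bigr)\Big|_{x=H^f(a,p)}.
\]
Since $H^f(a,\eta)\sim m_f(a)$ for $\eta$ uniform on $(0,1)$, integrating in $p$ is the same as integrating against $\rho_{f,a}(x)\,dx$, so \eqref{CSS 2} becomes the statement that
\[
\int_{\supp(f)}\bigl|\p_a L^f(a,x)+x\,L^f(a,x)\,\p_x L^f(a,x)\bigr|\,\rho_{f,a}(x)\,dx\le C
\]
uniformly over $a\in K$. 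Here $x\,\p_x L^f$ is read off the first-order relation above, and $\p_a L^f$ is obtained by differentiating \eqref{eq Lf integral form} under the integral sign (legitimate because $M_f$ is $C^\infty$ on $D(M_f)$, by Remark \ref{remark - Mellin consequences}). Feeding in the endpoint bounds from the previous step, together with the facts that $\log x$ has exponential moments under $m_f(a)$ and that $\rho_{f,a}$ decays fast enough at the endpoints, each of the two terms is integrable against $\rho_{f,a}$ with a bound uniform in $a\in K$.

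The step I expect to be the main obstacle is exactly the endpoint analysis underlying both \eqref{CSS 1} and \eqref{CSS 2}: one must show that $L^f$ and its $x$- and $a$-derivatives remain controlled at the ends of $\supp(f)$ — notably at $x=1$ for the beta case when $b<1$, where $f$ itself diverges, and as $x\to\infty$ for $f=e^{-bx}$ and $f=(x/(x+1))^b$, where the factor $x^{-a}/f(x)$ in $L^f$ grows and must be absorbed by the vanishing of the tail integral $\int_x^\infty(\psi_0^f(a)-\log y)y^{a-1}f(y)\,dy$ — and that every constant produced along the way is uniform over compact subsets of $D(M_f)$. These computations are elementary but delicate, and they must be carried out separately in each of the three cases.
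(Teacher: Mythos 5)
Your plan follows the paper's own proof: reduce to the three un-inverted cases via Lemma \ref{lemma CSS inversion}, express $\p_a L^f(a,H^f(a,p))$ through \eqref{equation partial H} and the chain rule as $\p_a L^f + x L^f \p_x L^f$, and control this case-by-case by endpoint estimates uniform on compacts — the paper organizes these into three explicit target bounds (on $L^f$, on $|x f'/f|\,L^f$, and on an auxiliary $G^f$) and quotes \cite{S2012} for the $e^{-bx}$ case, but the substance is the same. One minor slip worth correcting: in the beta case $1/f(x)=(1-x)^{1-b}$ diverges as $x\to1^{-}$ precisely when $b>1$, not $b<1$; in any event $L^f(a,x)\to 0$ there because the tail integral vanishes like $(1-x)^{b}$.
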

\begin{proof}
Fix $b>0$.  By Lemma \ref{lemma CSS inversion} it suffices to show the three functions
\[f(x)=(1-x)^{b-1}, \qquad\qquad f(x)=(1-x)^{b-1}\ind_{\{0<x<1\}}, \qquad\qquad f(x)=\big(\frac{x}{x+1}\big)^b
\]
satisfy the conditions of Hypothesis \ref{hypothesis CSS}.  In \cite{S2012} (equation 3.30 and the computation following equation 4.7), Sepp\"{a}l\"{a}inen showed that the function $f(x)=e^{-bx}$ satisfies these conditions.

We will write $C_0(a),C_1(a),\dots$ to indicate the positive constants $C_k(a)$ have a continuous dependence on $a$.  We claim it is sufficient to show that if $f(x)=(1-x)^{b-1}\ind_{\{0<x<1\}}$ or $f(x)=(\frac{x}{x+1})^b$, then for all $x\in \supp(f)$ the following three bounds hold:
\begin{align}
L^f(a,x)&\leq C_0(a)(1+|\log x|) \label{bound Lf 1}\\
|x\frac{f'(x)}{f(x)}|L^f(a,x)&\leq C_1(a)(1+|\log x|)\label{bound Lf 2}\\
|G^f(a,x)|&\leq C_2(a)(1+(\log x)^2)\label{bound Gf}
\end{align}
where
\begin{align}
G^f(a,x):&=\frac{x^{-a}}{f(x)} \int_0^x
  (\psi_1^f(a) + \psi_0^f(a)\log y - (\log y)^2) y^{a-1} f(y) dy\label{eq def Gf} \\
  &= -\frac{x^{-a}}{f(x)} \int_x^\infty
  (\psi_1^f(a) + \psi_0^f(a)\log y - (\log y)^2) y^{a-1} f(y) dy.\nonumber
\end{align}
Note that the second equality in the definition of $G^f(a,x)$ follows from the definitions of $\psi_0^f(a)$ and $\psi_1^f(a)$ in part (c) of Remark \ref{remark - Mellin consequences}.
\eqref{bound Lf 1} clearly implies \eqref{CSS 1}.  
To show \eqref{CSS 2} is satisfied, using \eqref{equation partial H}, we calculate
\begin{align}
\frac{\p}{\p a} L^f(a, H^f(a, p)) &= \frac{\p L}{\p a}(a, H^f(a,p)) + \frac{\p}{\p a} H^f(a,p) \frac{\p L}{\p x}(a, H^f(a,p))\nonumber\\
&= \left.\left(\frac{\p L}{\p a}(a, x) + x L^f(a,x) \frac{\p L}{\p x}(a, x)\right)\right|_{x=H^f(a,p)}. \nonumber
\end{align}

Since
\begin{align*}
\frac{\p L}{\p a}(a, x) + x L^f(a,x) \frac{\p L}{\p x}(a, x) =&(\psi_0^f(a)-2\log{x})L^f(a,x) - a L^f(a,x)^2\\
&+ G^f(a,x)- x\frac{f'(x)}{f(x)}L^f(a,x)^2,
\end{align*}
the conditions \eqref{bound Lf 1}, \eqref{bound Lf 2}, and \eqref{bound Gf} imply the existence of a positive constant $C_3(a)$ such that for all $x\in \supp(f)$

\begin{equation*}
\Big | \frac{\p L}{\p a}(a,x) +  x L^f(a,x) \frac{\p L}{\p x}(a,x) \Big|\leq C_3(a)\big(1+(\log x)^2\big).
\end{equation*}
Condition \eqref{CSS 2} now follows from
\begin{align*}
\int_0^1\left| \frac{\p}{\p a} L^f(a, H^f(a, p))\right| dp  \leq C_3(a)\int_0^1\big(1 + (\log H^f(a, p))^2\big) dp = C_3(a)\big(1+\psi_1^f(a)+(\psi^f_0(a))^2\big)<\infty.
\end{align*}
The last equality is justified by parts (a) and (c) of Remark \ref{remark - Mellin consequences} along with the fact that $H^f(a,\eta)\sim m_f(a)$ when $\eta$ is uniformly distributed on $(0,1)$.

We first show \eqref{bound Lf 1}, \eqref{bound Lf 2} and \eqref{bound Gf} for the case \textbf{$f(x)=(1-x)^{b-1}\ind_{\{0<x<1\}}$}.  Let $a\in D(M_f)=(0,\infty)$.  Then there exists some positive constant $C_4(a)$ such that 
\begin{align*}
\big|\psi_0^f(a)-\log y\big|y^{a-1}f(y)&\leq \begin{cases}
C_4(a)(1-\log y)y^{a-1} &\mbox{ if } 0<y<\frac{1}{2}\\
C_4(a)(1-y)^{b-1} &\mbox{ if }  \frac{1}{2}\leq y<1
\end{cases} \qquad \text{ and }\\
\big| \psi_1^f(a) + \psi_0^f(a)\log y - (\log y)^2\big | y^{a-1} f(y)&\leq 
\begin{cases}
C_4(a)\big(1+(\log y )^2\big)y^{a-1} &\mbox{ if } 0<y<\frac{1}{2}\\
C_4(a)(1-y)^{b-1} &\mbox{ if } \frac{1}{2}\leq y <1
\end{cases}
\end{align*}
Since $a>0$,  \eqref{eq Lf integral form} and \eqref{eq def Gf} give: for $0<x<\frac{1}{2}$
\begin{align}
  L^f(a,x) &\leq \frac{2^b C_4(a)}{x^a}\int_0^x (1 -\log y) y^{a-1} dy \leq C_0(a) (1+|\log x|) \qquad \text{ and } \label{equation L^f beta bound 1}\\
\big|G^f(a,x)\big|&\leq \frac{2^b C_4(a)}{x^a}\int_0^x \big(1+(\log y)^2\big) y^{a-1}dy\leq C_2(a)\big(1+(\log x)^2\big).\nonumber
\end{align}
Similarly, the secondary expressions in \eqref{eq Lf integral form} and \eqref{eq def Gf}  give: for $1/2\leq x <1$
\begin{align}
L^f(a,x)&\leq \frac{2^a C_4(a)}{(1-x)^{b-1}}\int_x^1 (1-y)^{b-1} dy \leq C_0(a) (1-x)\qquad \text{ and } \label{equation L^f beta bound 2}\\
\big|G^f(a,x)\big|&\leq \frac{2^a C_4(a)}{(1-x)^{b-1}}\int_x^1 (1-y)^{b-1}dy\leq C_2(a)(1-x)\nonumber
\end{align}
where we increased $C_0(a)$ and $C_2(a)$ if necessary.
Thus the bounds \eqref{bound Lf 1} and \eqref{bound Gf} hold.
Moreover, by \eqref{equation L^f beta bound 1} and \eqref{equation L^f beta bound 2}
\[
\big| x\frac{f'(x)}{f(x)}\big|L^f(a,x)=|b-1|\frac{x}{1-x}L^f(a,x)\leq 
\begin{cases}
C_1(a) (1+|\log x|) &\mbox{ if } 0\leq x<\frac{1}{2}\\
C_1(a) (1-x) &\mbox{ if } \frac{1}{2}\leq x<1
\end{cases}
\]
proving the bound \eqref{bound Lf 2}.

We now consider the case \textbf{$f(x)=(\frac{x}{x+1})^b$}. Let $a\in D(M_f)=(-b,0)$.  Then
\begin{align*}
\big|\psi_0^f(a)-\log y\big|y^{a-1}f(y)&\leq \begin{cases}
C_4(a)(1-\log y)y^{a+b-1} &\mbox{ if } 0<y<1\\
C_4(a)(1+\log y)y^{a-1} &\mbox{ if } y\geq 1
\end{cases}\qquad \text{ and }\\
\big| \psi_1^f(a) + \psi_0^f(a)\log y - (\log y)^2\big | y^{a-1} f(y)&\leq 
\begin{cases}
C_4(a)\big(1+(\log y )^2\big)y^{a+b-1} &\mbox{ if } 0<y<1\\
C_4(a)(1+(\log y)^2) y^{a-1} &\mbox{ if } y\geq 1
\end{cases}.
\end{align*}
Since $a+b>0$, \eqref{eq Lf integral form} and \eqref{eq def Gf} give: for $0<x<1$
\begin{align*}
  L^f(a,x) &\leq \frac{2^b C_4(a)}{x^{a+b}}\int_0^x (1 -\log y) y^{a+b-1} dy \leq C_0(a) (1+|\log x|) \qquad \text{ and }\\
\big|G^f(a,x)\big|&\leq \frac{2^b C_4(a)}{x^{a+b}}\int_0^x \big(1+(\log y)^2\big) y^{a+b-1}dy\leq C_2(a)\big(1+(\log x)^2\big).
\end{align*}
Similarly, since $a<0$, the secondary expressions in \eqref{eq Lf integral form} and \eqref{eq def Gf} give: for $x\geq 1$
\begin{align*}
L^f(a,x)&\leq \frac{2^b C_4(a)}{x^a}\int_x^\infty (1+\log y)y^{a-1} dy \leq C_0(a) (1+|\log x|)\\
\big|G^f(a,x)\big|&\leq \frac{2^b C_4(a)}{x^a}\int_x^\infty (1+(\log y)^2) y^{a-1}dy\leq C_2(a)(1+(\log x)^2)
\end{align*}
where we increased $C_0(a)$ and $C_2(a)$ if necessary.
Thus the bounds \eqref{bound Lf 1} and \eqref{bound Gf} hold.
Since $|x\frac{f'(x)}{f(x)}|=b\frac{1}{x+1}\leq b$, \eqref{bound Lf 1} implies \eqref{bound Lf 2} completing the proof.
\end{proof}

\section{Lemmas used in Section \ref{section Mellin} and Section \ref{section variance UB}} \label{appendix misc}

\begin{lemma}\label{lemma finite square moments}
Assume the polymer environment is such that $\log R^1$, $\log R^2$, $\log Y^1$, and $\log Y^2$ have finite second moments. Then $\E[(\log Z_x)^2] <\infty$ for any $x\in \Z^2_+$.
\end{lemma}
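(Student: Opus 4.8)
The plan is to induct on $|x| := m+n$ where $x=(m,n)$. For the base cases, $\log Z_{0,0}=0$ trivially has a finite second moment, while for $x$ on the axes we have the explicit formulas $\log Z_{m,0}=\sum_{i=1}^m \log R^1_{i,0}$ and $\log Z_{0,n}=\sum_{j=1}^n \log R^2_{0,j}$, which are finite sums of random variables with finite second moments; Minkowski's inequality in $L^2(\P)$ gives $\E[(\log Z_x)^2]<\infty$ in these cases. (Independence is not needed, only the finite-sum structure.)

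For the inductive step, fix $x\in\N^2$ and assume $\E[(\log Z_y)^2]<\infty$ for all $y\in\Z_+^2$ with $|y|<|x|$; in particular this holds for $y=x-\alpha_1$ and $y=x-\alpha_2$. The key point is an elementary two-sided bound for the logarithm of a sum of two positive numbers. Since $Z_x=Y^1_x Z_{x-\alpha_1}+Y^2_x Z_{x-\alpha_2}$ by \eqref{equation Z recursion} and all factors are positive, we have on the one hand $Z_x\geq Y^1_x Z_{x-\alpha_1}$, and on the other hand $Z_x\leq 2\max\bigl(Y^1_x Z_{x-\alpha_1},\,Y^2_x Z_{x-\alpha_2}\bigr)$. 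Taking logarithms and using $\max(A,B)\leq |A|+|B|$ yields
\[
|\log Z_x|\;\leq\;\log 2+|\log Y^1_x|+|\log Y^2_x|+|\log Z_{x-\alpha_1}|+|\log Z_{x-\alpha_2}|.
\]
Now apply Minkowski's inequality: $\|\log Z_x\|_{L^2}\leq \log 2+\|\log Y^1_x\|_{L^2}+\|\log Y^2_x\|_{L^2}+\|\log Z_{x-\alpha_1}\|_{L^2}+\|\log Z_{x-\alpha_2}\|_{L^2}$, which is finite by the hypothesis on the bulk weights and the inductive hypothesis. This completes the induction.

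There is essentially no obstacle here: the argument is a routine induction, and the only step deserving a moment's care is the logarithmic estimate on a sum of positive terms, where one must combine the lower bound $\log Z_x\geq \log Y^1_x+\log Z_{x-\alpha_1}$ with the upper bound coming from $Z_x\leq 2\max(\cdot,\cdot)$ to get a genuinely two-sided control on $|\log Z_x|$.
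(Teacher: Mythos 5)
Your proof is correct and follows essentially the same route as the paper's: handle the axes explicitly via the finite sums $\log Z_{k,0}=\sum_{i\le k}\log R^1_{i,0}$ and $\log Z_{0,\ell}=\sum_{j\le \ell}\log R^2_{0,j}$, and then induct into the interior using the recursion $Z_x=Y^1_xZ_{x-\alpha_1}+Y^2_xZ_{x-\alpha_2}$. Your two-sided bound, combining $Z_x\ge Y^1_xZ_{x-\alpha_1}$ with $Z_x\le 2\max(Y^1_xZ_{x-\alpha_1},Y^2_xZ_{x-\alpha_2})$ to get $|\log Z_x|\le \log 2+|\log Y^1_x|+|\log Y^2_x|+|\log Z_{x-\alpha_1}|+|\log Z_{x-\alpha_2}|$, is in fact a bit more careful than the bound as printed in the paper. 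The paper asserts the sandwich
\[
(\log Y^1_x+\log Z_{x-\alpha_1})\wedge(\log Y^2_x+\log Z_{x-\alpha_2})\le \tfrac{1}{2}\log Z_x\le (\log Y^1_x+\log Z_{x-\alpha_1})\vee(\log Y^2_x+\log Z_{x-\alpha_2})
\]
and deduces $(\log Z_x)^2\le 4(\log Y^1_x+\log Z_{x-\alpha_1})^2+4(\log Y^2_x+\log Z_{x-\alpha_2})^2$, but both of these fail when, for instance, $Y^1_x=Y^2_x=Z_{x-\alpha_1}=Z_{x-\alpha_2}=1$ (then $\log Z_x=\log 2>0$ while the claimed upper bound is $0$). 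Your additive $\log 2$ is exactly the missing correction; the conclusion and the overall argument are of course unaffected.
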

\begin{proof}
Since $\log Z_{k,0} = \sum_{i=1}^k R^1_{i,0}$ and  $\log Z_{0,\ell} = \sum_{j=1}^\ell \log R^2_{0,j}$, $\log Z_x$ has finite second moment for each $x\in \Z^2_+ \setminus \N^2$. If $x \in \N^2$, the recursion \eqref{equation Z recursion} implies that
\[ (\log Y^1_x + \log Z_{x-\alpha_1}) \wedge (\log Y^2_x + \log Z_{x-\alpha_2}) \leq \frac{\log Z_x}{2} \leq (\log Y^1_x + \log Z_{x-\alpha_1}) \vee (\log Y^2_x + \log Z_{x-\alpha_2}). \]
Thus 
\[(\log Z_x)^2 \leq 4(\log Y^1_x + \log Z_{x-\alpha_1})^2 + 4(\log Y^2_x + \log Z_{x-\alpha_2})^2.\]
Since $\log Y^1$ and $\log Y^2$ have finite second moments, an inductive argument finishes the proof.
\end{proof}
\begin{lemma}\label{lem cov calc}
Suppose $f_k:(0,\infty) \to [0,\infty)$ for $k= 1,\ldots, r$ and $a_0<a<a_1$ are real numbers such that $[a_0,a_1]\subset\bigcap_{k=1}^r D(M_{f_k})$.  Suppose we have a collection of independent random variables $\{X_k\}_{k=1}^r$ where $X_k\sim m_{f_k}(a)$ for all $ 1\leq k\leq r$.  Let $\E^{a}$ be the expectation corresponding to the product measure induced by $\{X_k\}_{k=1}^r$. 

Let $S=\sum_{k=1}^r \log{X_k}$ and $A:\R^r\rightarrow\R$ be a measurable function such that $\E^{a}[A(X_1,\ldots, X_r)^2]<\infty$ for all $a\in [a_0,a_1]$.  Then
\[
\frac{\p}{\p a}\E^{a}[A(X_1,\ldots, X_r)]=\Cov^a(A(X_1,\ldots, X_r),S) \quad \text{ for all } \quad a\in(a_0,a_1) 
\]
and
$(a_0,a_1)\ni a\mapsto \frac{\p}{\p a}\E^{a}[A(X_1,\ldots, X_r)]$ is continuous.
\end{lemma}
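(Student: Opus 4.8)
The plan is to write the expectation as an integral against the product density and differentiate under the integral sign. Put $x=(x_1,\dots,x_r)$, $S(x):=\sum_{k=1}^r\log x_k$, $\psi(t):=\sum_{k=1}^r\psi_0^{f_k}(t)$, and $g_t(x):=\prod_{k=1}^r\rho_{f_k,t}(x_k)=\prod_{k=1}^rM_{f_k}(t)^{-1}x_k^{t-1}f_k(x_k)$, so that $\E^t[A]=\int A(x)g_t(x)\,dx$. Fix $a\in(a_0,a_1)$ and choose $\delta>0$ small enough that $[a-2\delta,a+4\delta]\subset(a_0,a_1)$; recall $(a_0,a_1)\subset\bigcap_kD(M_{f_k})$. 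For each fixed $x$, the map $t\mapsto g_t(x)$ is $C^1$ (each $M_{f_k}$ is $C^\infty$ on $D(M_{f_k})$ by Remark~\ref{remark - Mellin consequences}) with $\frac{\p}{\p t}g_t(x)=(S(x)-\psi(t))g_t(x)$, since $\frac{\p}{\p t}\log\rho_{f_k,t}(x_k)=\log x_k-\psi_0^{f_k}(t)$. Hence, for small $h$, the fundamental theorem of calculus and Fubini's theorem give
\[
\frac{\E^{a+h}[A]-\E^a[A]}{h}=\frac1h\int_a^{a+h}\Bigl(\int A(x)(S(x)-\psi(t))g_t(x)\,dx\Bigr)\,dt=\frac1h\int_a^{a+h}\Cov^t(A,S)\,dt,
\]
the last step because $\psi(t)=\E^t[S]$ by Remark~\ref{remark - Mellin consequences}, so the inner integral is $\E^t[AS]-\E^t[A]\E^t[S]=\Cov^t(A,S)$. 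It therefore suffices to (i) justify the use of Fubini and (ii) show $t\mapsto\Cov^t(A,S)$ is continuous on $(a_0,a_1)$; letting $h\to0$ then yields $\frac{\p}{\p a}\E^a[A]=\Cov^a(A,S)$ together with its continuity in $a$.

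Both (i) and (ii) hinge on a single uniform domination of $g_t$, which I expect to be the only real obstacle. For $t\in[a-\delta,a+\delta]$ and $x>0$ one has $x^{t-1}\le x^{a-\delta-1}+x^{a+\delta-1}=x^{a-\delta-1}(1+x^{2\delta})$, and $\inf_{t\in[a-\delta,a+\delta]}\prod_kM_{f_k}(t)>0$ by continuity and positivity of each $M_{f_k}$ on its domain; so there is a constant $c$ with
\[
g_t(x)\le c\,g_{a-\delta}(x)\prod_{k=1}^r(1+x_k^{2\delta})\qquad\text{for all }x\text{ and all }t\in[a-\delta,a+\delta].
\]
With $C:=\sup_{[a-\delta,a+\delta]}|\psi|<\infty$ this gives $|A(x)(S(x)-\psi(t))g_t(x)|\le c\,|A(x)|(|S(x)|+C)\prod_k(1+x_k^{2\delta})\,g_{a-\delta}(x)$, whose $x$-integral, by Cauchy--Schwarz against $g_{a-\delta}(x)\,dx$, is at most $c\,(\E^{a-\delta}[A^2])^{1/2}\,(\E^{a-\delta}[(|S|+C)^2\prod_k(1+X_k^{2\delta})^2])^{1/2}$. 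The first factor is finite by hypothesis. For the second, expand and use independence of $X_1,\dots,X_r$ under $\E^{a-\delta}$: it reduces to a finite sum of products of moments $\E^{a-\delta}[|\log X_k|^{p_k}X_k^{q_k}]$ with $p_k\in\{0,1,2\}$ and $q_k\in\{0,2\delta,4\delta\}$, each finite because $a-\delta+q_k\in D(M_{f_k})$ and $\log X_k$ has finite exponential moments under $m_{f_k}(a-\delta)$ (Remark~\ref{remark - Mellin consequences}). Crucially this bound does not depend on $t\in[a-\delta,a+\delta]$, which validates the use of Fubini above, proving (i).

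For (ii), fix $a$ and let $t_n\to t$ with $t,t_n\in[a-\delta,a+\delta]$; then $g_{t_n}(x)\to g_t(x)$ and $\psi(t_n)\to\psi(t)$ pointwise while $|A(x)(S(x)-\psi(t_n))g_{t_n}(x)|$ is dominated by the integrable function just produced, so dominated convergence gives $\Cov^{t_n}(A,S)\to\Cov^t(A,S)$, i.e.\ continuity on $(a_0,a_1)$. Then $\frac1h\int_a^{a+h}\Cov^t(A,S)\,dt\to\Cov^a(A,S)$ as $h\to0$ by continuity at $t=a$, so the derivative exists and equals $\Cov^a(A,S)$; since $a\in(a_0,a_1)$ was arbitrary, $a\mapsto\frac{\p}{\p a}\E^a[A]=\Cov^a(A,S)$ is continuous. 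The only genuine care needed is the choice of $\delta$ and the slack around $[a_0,a_1]$ making all the mixed moments $\E^{a-\delta}[|\log X_k|^{p_k}X_k^{q_k}]$ finite; the rest is routine use of Remark~\ref{remark - Mellin consequences}, Fubini, and dominated convergence.
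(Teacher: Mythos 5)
Your proof is correct, but it takes a genuinely different route from the paper. The paper first conditions on $S$: it observes that the conditional density of $(\log X_1,\dots,\log X_r)$ given $S=s$ has no dependence on $a$, which reduces the problem to differentiating the one-dimensional integral $\int \E^a[A\mid S=s]\,h_a(s)\,ds$ under the integral sign, where $h_a$ is the density of $S$. The domination there is over the whole compact interval $[a_0,a_1]$, and it is obtained by a slick monotonicity argument: from $\p_a\log h_a(s)=s-\E^a[S]$ one gets $|\p_a h_a(s)|\le C(1+|s|)\sup_a h_a(s)$, and then $\sup_{a\in[a_0,a_1]}h_a(s)$ is bounded by $C'(h_{a_0}(s)+h_{a_1}(s))$ by splitting into $s\le\E^{a_0}[S]$ (where $a\mapsto h_a(s)$ is non-increasing) and $s>\E^{a_0}[S]$ (where an exponentially tilted version is increasing). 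You instead differentiate the full $r$-dimensional product density $g_t$ directly, skip the conditioning, write the difference quotient via the fundamental theorem of calculus and Fubini as an average of $\Cov^t(A,S)$, and justify everything with the elementary pointwise domination $x^{t-1}\le x^{a-\delta-1}(1+x^{2\delta})$ on a small interval $[a-\delta,a+\delta]$ around the fixed $a$, together with Cauchy--Schwarz against $g_{a-\delta}$. Your route is more hands-on and local; the paper's conditioning-on-$S$ reduction is more structural and gives a cleaner one-dimensional domination that holds uniformly over all of $[a_0,a_1]$ at once. Both proofs establish both the derivative identity and the continuity claim. One small note: you want the argument for both signs of $h$, which your setup covers since $\frac1h\int_a^{a+h}$ still converges to the integrand's value at $a$ by the same continuity for $h<0$; this is implicit but worth stating.
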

\begin{proof}
The joint density of $(\log X_1 ,\log X_2 ,\dots,\log X_r )$ is given by 
\[
g(x_1,\dots,x_r)=\frac{e^{a\sum_{k=1}^rx_k}}{\prod_{k=1}^rM_{f_k}(a)}\prod_{k=1}^rf_k(e^{x_k}).
\]
Thus the density of $S$ is given by 
\begin{align}
h_a(s)=\frac{e^{as}}{\prod_{k=1}^rM_{f_k}(a)}\int_{\mathbb{R}^{r-1}}f_1(e^{x_1})f_2(e^{x_2-x_1})\dots f_r(e^{s-x_{r-1}})dx_1,\dots,x_{r-1}\label{eq:h}
\end{align}
Therefore the joint density of $(\log X_1 ,\log X_2 ,\dots,\log X_r )$ given that $S=s$ is 
\[
\frac{g(x_1,\dots,x_r)\ind_{\{\sum_{k=1}^rx_k=s\}}}{h_a(s)}=\frac{\prod_{k=1}^rf_k(e^{x_k}) \ind_{\{\sum_{k=1}^rx_k=s\}}}{\int_{\mathbb{R}^{r-1}}f_1(e^{x_1})f_2(e^{x_2-x_1})\dots f_r(e^{s-x_{r-1}})dx_1,\dots,x_{r-1}},
 \]
which has no $a$ dependence.  Thus
\begin{align*}
\frac{\p}{\p a}\mathbb{E}^a[A(X_1,\ldots, X_r)] &=\frac{\p}{\p a}\int_{\mathbb{R}}\mathbb{E}^a[A(X_1,\ldots, X_r)|S=s]h_a(s)ds\\
\ &=\int_{\mathbb{R}}\mathbb{E}^a[A(X_1,\ldots, X_r)|S=s]\frac{\p}{\p a}h_a(s)ds\\
\ &=\int_{\mathbb{R}}\mathbb{E}^a[A(X_1,\ldots, X_r)|S=s]h_a(s)\big(s-\sum_{k=1}^r\frac{\p}{\p a}\log M_{f_k}(a) \big) ds\\
\ &=\text{Cov}^a(A(X_1,\ldots, X_r),S).
\end{align*}
The last equality comes from $\mathbb{E}[S]=\sum_{k=1}^r\mathbb{E}[\log X_k ]=\sum_{k=1}^r\frac{\p}{\p a}\log M_{f_k}(a) $, by part (a) of Remark \ref{remark - Mellin consequences}.
The interchanging of the derivative and the integral is justified by the bound 
\begin{align}
\int_{\mathbb{R}}\mathbb{E}[\left|A(X_1,\ldots, X_r)\right|\big|S=s]\sup_{a\in[a_0,a_1]}\left|\frac{\p}{\p a}h_a(s)\right|ds<\infty. \label{eq:sup deriv}
\end{align}
Once we show that there is a constant $C$ depending only on $a_0$ and  $a_1$ such that 
\begin{align}
\sup_{a\in[a_0,a_1]}\left|\frac{\p}{\p a}h_a(s)\right|\leq C(1+|s|)(h_{a_0}(s)+h_{a_1}(s)) \label{eq:sup bound}
\end{align}
we will have the bound \eqref{eq:sup deriv} since
\begin{align*}
\int_{\mathbb{R}}\mathbb{E}[\left|A(X_1,\ldots, X_r)\right|\big|S=s](1+|s|)h_{a_j}(s)ds=\mathbb{E}^{a_j}[|A(X_1,\ldots, X_r)|(1+|S|)]\\
\leq \mathbb{E}^{a_j}[A(X_1,\ldots, X_r)^2]^\frac{1}{2}\mathbb{E}^{a_j}[(1+|S|)^2]^\frac{1}{2}.
\end{align*}
The last expression is finite since $\E^{a_j}[A(X_1,\ldots, X_r)^2]<\infty$ by assumption, and $S$ is a finite sum of independent random variables each of which has finite exponential moments, by part (a) of Remark \ref{remark - Mellin consequences}.
Notice that the bound \eqref{eq:sup deriv} also implies that $a\mapsto \frac{\p}{\p a}\E^a[A(X_1,\ldots, X_r)]$ is continuous.
All that is left to do is verify the bound \eqref{eq:sup bound}.  To accomplish this, notice that equation \eqref{eq:h} implies that $\frac{\p}{\p a}\log{h_a(s)}=s-\E^a[S]$. So 
\[\sup_{a\in[a_0,a_1]}\left|\frac{\p}{\p a}h_a(s)\right|\leq C_1(1+|s|)\sup_{a\in[a_0,a_1]}h_a(s)
\]
 where $C_1:=\max(1,\sup_{a\in[a_0,a_1]}|E^a[S]|)$.  Thus it suffices to show that $\sup_{a\in[a_0,a_1]}h_a(s)\leq C_2\Bigl(h_{a_0}(s)+h_{a_1}(s)\Bigr)$ for some constant $C_2$ independent of $s$. Since  By part (c) of Remark \ref{remark - Mellin consequences}, $a\mapsto \E^a[S]$ is an increasing function.  Therefore for all $ s\leq\E^{a_0}[S]$, the function $a\mapsto h_a(s)$ is non-increasing on $[a_0,a_1]$.  Thus 
\[
\sup_{a\in[a_0,a_1]}h_a(s)\leq h_{a_0}(s) \text{ for all } s\leq \E^{a_0}[S].
\]
On the other hand, if $s> \E^{a_0}[S]$, then $\frac{\p}{\p a}\log \Bigl(h_a(s)\exp{(a(\E^{a_1}[S]-\E^{a_0}[S])})\Bigr)=s-\E^a[S]+\E^{a_1}[S]-\E^{a_0}[S]> 0$ for all $a\in[a_0,a_1]$.  Thus for all $s> \E^{a_0}[S]$, $a\mapsto h_a(s)\exp{\Bigl(a(\E^{a_1}[S]-\E^{a_0}[S])\Bigr)}$ is increasing on the interval $[a_0,a_1]$.  Therefore
\[
\sup_{a\in[a_0,a_1]}h_a(s)\leq C_3h_{a_1}(s) \text{ for all } s> \E^{a_0}[S]
\]
where $C_3=\exp{\Bigl((a_1-a_0)(\E^{a_1}[S]-\E^{a_0}[S]})\Bigr)$.  We now get the desired result with $C_2=1+C_3$.
\end{proof}

\begin{lemma}\label{lemma Q(t_j>k)}
Assume that the polymer environment is distributed as in \eqref{polymer environment distribution} and let $\epsilon$ be small enough such that for all $|\lambda|\leq\epsilon$, $a_1+\lambda\in D(M_{f^1})$ and $a_2-\lambda\in D(M_{f^2})$. Let $(m,n)\in \N^2$ and $k\in \N$. Then, with notation as in \eqref{equations coupling}, $Q_{m,n}^{(a_1+\lambda,a_2-\lambda)}(t_1\geq k)$ is stochastically non-decreasing in $\lambda$ and $Q_{m,n}^{(a_1+\lambda,a_2-\lambda)}(t_2\geq k)$ is stochastically non-increasing in $\lambda$.
\end{lemma}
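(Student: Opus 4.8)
The plan is to prove the stronger, pointwise statement: under the coupling measure $\hat{\P}$ of \eqref{def coupled environment} all the quenched measures $Q_{m,n}^{(b_1,b_2)}$ are simultaneously defined, and I will show that for every $k\in\N$ the map $\lambda\mapsto Q_{m,n}^{(a_1+\lambda,a_2-\lambda)}(t_1\geq k)$ is non-decreasing $\hat{\P}$-almost surely, while $\lambda\mapsto Q_{m,n}^{(a_1+\lambda,a_2-\lambda)}(t_2\geq k)$ is non-increasing; this immediately gives stochastic monotonicity. If $k>m$ then $Q_{m,n}^{(\cdot,\cdot)}(t_1\geq k)\equiv 0$, so assume $1\leq k\leq m$. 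The path going right $k$ steps, then up to the top, then right to $(m,n)$ has $t_1=k$ and weight a positive product of boundary quantile weights $H^{f^1}(\cdot,\eta^1_i)\in(0,\infty)$ and positive bulk weights, so $Q_{m,n}^{(a_1+\lambda,a_2-\lambda)}(t_1\geq k)>0$ a.s.; likewise the path going straight up gives $Q_{m,n}^{(a_1+\lambda,a_2-\lambda)}(t_1<k)>0$ a.s. Hence it suffices to show $\frac{\p}{\p\lambda}\log Q_{m,n}^{(a_1+\lambda,a_2-\lambda)}(t_1\geq k)\geq 0$.

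Write $b_1=a_1+\lambda$, $b_2=a_2-\lambda$ and abbreviate $Q=Q_{m,n}^{(b_1,b_2)}$, $E^Q$ the corresponding quenched expectation. By \eqref{cov calc 2} and the chain rule,
\[
\frac{\p}{\p\lambda}\log W(b_1,b_2)(x_\centerdot)=\sum_{i=1}^{t_1}L^{f^1}(b_1,H^{f^1}(b_1,\eta_i^1))-\sum_{j=1}^{t_2}L^{f^2}(b_2,H^{f^2}(b_2,\eta_j^2))=:\phi(x_\centerdot).
\]
Since $Q(t_1\geq k)$ is a ratio of finite sums over $\Pi_{m,n}$ of weights $W(b_1,b_2)$ that are $C^1$ in $\lambda$ (differentiability of $H^{f^j}$ in its first variable being recorded around \eqref{equation partial H}), I may differentiate term by term to obtain
\[
\frac{\p}{\p\lambda}\log Q(t_1\geq k)=E^{Q}[\phi\mid t_1\geq k]-E^{Q}[\phi]=Q(t_1<k)\Bigl(E^{Q}[\phi\mid t_1\geq k]-E^{Q}[\phi\mid t_1<k]\Bigr).
\]

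The crucial observation is that, because $(m,n)\in\N^2$, every up-right path leaves the origin either rightward or upward, so $t_1\,t_2=0$ on $\Pi_{m,n}$. Put $\Lambda_k:=\sum_{i=1}^k L^{f^1}(b_1,H^{f^1}(b_1,\eta_i^1))\geq 0$; this is deterministic under $E^{Q}$ since it depends only on the environment. On $\{t_1\geq k\}$ we have $t_2=0$, so $\phi=\sum_{i=1}^{t_1}L^{f^1}(b_1,H^{f^1}(b_1,\eta_i^1))\geq\Lambda_k$ by nonnegativity of $L^{f^1}$. On $\{t_1<k\}$: if $t_1\geq 1$ then still $t_2=0$ and $\phi=\sum_{i=1}^{t_1}L^{f^1}(b_1,H^{f^1}(b_1,\eta_i^1))\leq\Lambda_k$; if $t_1=0$ then $\phi=-\sum_{j=1}^{t_2}L^{f^2}(b_2,H^{f^2}(b_2,\eta_j^2))\leq 0\leq\Lambda_k$. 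Therefore $E^{Q}[\phi\mid t_1\geq k]\geq\Lambda_k\geq E^{Q}[\phi\mid t_1<k]$, which gives $\frac{\p}{\p\lambda}\log Q(t_1\geq k)\geq 0$. The $t_2$ statement is proved identically with $\Theta_k:=\sum_{j=1}^k L^{f^2}(b_2,H^{f^2}(b_2,\eta_j^2))$: on $\{t_2\geq k\}$ one has $t_1=0$ and $\phi\leq-\Theta_k$, while on $\{t_2<k\}$ one has $\phi\geq-\Theta_k$, so $\frac{\p}{\p\lambda}\log Q(t_2\geq k)\leq 0$.

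The only inputs used are $L^{f^1},L^{f^2}\geq 0$ and the differentiability of $H^{f^j}$ in its parameter, both from the discussion around \eqref{equation partial H} under Hypothesis \ref{hypothesis CSS} (valid for the four models by Remark \ref{remark f is CSS}), together with a.s.\ positivity of the relevant restricted partition functions. I expect no real obstacle: the one substantive step is the identity $t_1t_2=0$, which collapses $\phi$ into a one-sided sum on each of $\{t_1\geq k\}$ and $\{t_1<k\}$, after which the sign is forced by $L^{f^j}\geq 0$; the remaining care is just in the routine conditional-expectation bookkeeping and the $t_1=0$ edge case.
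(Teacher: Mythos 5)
Your proof is correct, and it is essentially the same strategy as the paper's: both establish the stronger pointwise (a.s.) monotonicity of $\lambda\mapsto Q^{(a_1+\lambda,a_2-\lambda)}_{m,n}(t_j\geq k)$ by differentiating and invoking $L^{f^j}\geq 0$ together with the smoothness of $H^{f^j}$ in its first argument. The packaging differs. The paper differentiates $Q^{(b_1,b_2)}_{m,n}(t_j\geq k)$ separately in $b_1$ and in $b_2$. For the cross derivative $i\neq j$ it notes that the restricted sum over $\{t_j\geq k\}$ has no $b_i$-dependence (the same $t_1t_2=0$ fact you use explicitly), so the sign is that of $-\frac{\p}{\p b_i}\log Z_{m,n}\leq 0$; for the diagonal derivative $i=j$ it writes $\frac{\p}{\p b_j}Q(t_j\geq k)=\Cov^{Q}\bigl(\sum_{l=1}^{t_j}L^{f^j}(b_j,H^{f^j}(b_j,\eta^j_l)),\ind_{\{t_j\geq k\}}\bigr)$ and appeals to the positive correlation of two non-decreasing functions of the single random variable $t_j$. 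You instead take the combined $\lambda$-derivative of $\log Q$ in one shot, obtaining $Q(t_1<k)\bigl(E^Q[\phi\mid t_1\geq k]-E^Q[\phi\mid t_1<k]\bigr)$, and verify the sign via the deterministic threshold $\Lambda_k$. Your $\Lambda_k$-comparison is a self-contained replacement for the paper's asserted covariance positivity, so the net effect is the same, with your version making the FKG-type step completely elementary at the cost of slightly more bookkeeping in splitting into the $t_1\geq 1$ and $t_1=0$ subcases of $\{t_1<k\}$.
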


\begin{proof}

\begin{align}\label{equation Q partial}
\frac{\p}{\p b_i} Q^{(b_1, b_2)}_{m,n} (t_j\geq k) &= \frac{\p}{\p b_i} \left(\frac{1}{Z_{m,n}(b_1,b_2)} \sum_{x_\centerdot\in \Pi_{m,n}}\ind_{\{t_j\geq k\}}W(b_1,b_2)(x_\centerdot) \right)
\end{align}
If $i\neq j$, the sum in \eqref{equation Q partial} has no $b_i$ dependence, so
\begin{align*}
\frac{\p}{\p b_i} Q^{(b_1, b_2)}_{m,n} (t_j\geq k) &= \frac{-1}{(Z_{m,n}(b_1,b_2))^2}\left(\frac{\p}{\p b_i}Z_{m,n}(b_1,b_2)\right) \sum_{x_\centerdot\in \Pi_{m,n}}\ind_{\{t_j\geq k\}}W(b_1,b_2)(x_\centerdot),
\end{align*}
which is non-positive by \eqref{equation deriv logZ}. If $i=j$, then by \eqref{equation deriv W} and \eqref{equation deriv logZ},
\begin{align*}
\frac{\p}{\p b_i} Q^{(b_1, b_2)}_{m,n} (t_i\geq k) &= \frac{\sum_{x_\centerdot\in \Pi_{m,n}}\ind_{\{t_i\geq k\}}\frac{\p}{\p b_i}W(b_1,b_2)(x_\centerdot)}{Z_{m,n}(b_1,b_2)} -\left(\frac{\p}{\p b_i} \log Z_{m,n}(b_1,b_2)\right) \frac{\sum_{x_\centerdot\in \Pi_{m,n}}\ind_{\{t_i\geq k\}}W(b_1,b_2)(x_\centerdot)}{Z_{m,n}(b_1,b_2)}\\
&= \Cov^{Q_{m,n}^{(b_1,b_2)}}\big(\sum_{k=1}^{t_i} L^{f^i}(b_i, H^{f^i}(b_i,\eta^i_k)), \ind_{\{t_i\geq k\}}\big),
\end{align*}
which is non-negative.
\end{proof}

\section{Properties of \texorpdfstring{$\psi_n^f$}{psi}}\label{appendix psi}

\begin{figure}[H]
\[
	\begin{array}{|l||c|c|}
		\hline
		\text{Model} & \psi_n^{f^1}(a_1) & \psi_n^{f^2}(a_2)  \\ \hline \hline
		\text{IG} & (-1)^{n+1}(\Psi_n(\mu-\theta) - \delta_{n,0} \log \beta) & (-1)^{n+1}(\Psi_n(\theta) - \delta_{n,0}\log \beta ) \\ \hline 
		\text{G} & \Psi_n(\mu+\theta) - \delta_{n,0} \log \beta & (-1)^{n+1}(\Psi_n(\theta) - \Psi_n(\mu+\theta)) \\ \hline
		\text{B} & \Psi_n(\mu+\theta) - \Psi_n(\mu+\theta+\beta) & (-1)^{n+1}(\Psi_n(\theta) - \Psi_n(\mu+\theta)) \\ \hline
		\text{IB} & (-1)^{n+1}(\Psi_n(\mu-\theta) - \Psi_n(\mu-\theta + \beta))  & \Psi_n(\mu-\theta + \beta) + (-1)^{n+1} \Psi_n(\theta) \\ \hline 
	\end{array}
\]
\caption{$\psi_n^{f}$ functions for each of the four basic beta-gamma models.}\label{table psi_n^f}
\end{figure}

By \cite{AS1964} (p.260 line 6.4.1) the polygamma function of order $n$, $\Psi_n(x)= \frac{\p^{n+1}}{\p x^{n+1}} \log \Gamma(x)$, has integral representation
\begin{equation}\label{equation psi_n integral rep}
\Psi_n(x)=(-1)^{n+1} \int_0^\infty \frac{t^n e^{-xt}}{1-e^{-t}}dt.
\end{equation}

\begin{lemma} \label{lemma psi are log-convex}
For any $n\in \N$, the map $a\mapsto \frac{\Psi_{n+1}(a)}{\Psi_n(a)}$ is strictly increasing on $(0,\infty)$.
\end{lemma}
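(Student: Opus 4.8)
The plan is to use the integral representation \eqref{equation psi_n integral rep} to reduce the statement to the strict log-convexity of a Laplace transform. Fix $n\in\N$ and define
\[
g(x):=(-1)^{n+1}\Psi_n(x)=\int_0^\infty \frac{t^n e^{-xt}}{1-e^{-t}}\,dt,\qquad x>0.
\]
Because $1-e^{-t}\sim t$ as $t\downarrow 0$, the integrand is $O(t^{n-1})$ near the origin, which is integrable precisely because $n\ge1$; the factor $e^{-xt}$ controls the tail for every $x>0$. For $x$ ranging over a compact subinterval of $(0,\infty)$ one can dominate $t^{n+k}e^{-xt}(1-e^{-t})^{-1}$ uniformly for each $k\ge 0$, so $g$ is smooth and strictly positive, and differentiation under the integral sign gives
\[
g'(x)=-\int_0^\infty \frac{t^{n+1}e^{-xt}}{1-e^{-t}}\,dt=(-1)^{n+1}\Psi_{n+1}(x),\qquad
g''(x)=\int_0^\infty \frac{t^{n+2}e^{-xt}}{1-e^{-t}}\,dt .
\]
Hence $\dfrac{\Psi_{n+1}(a)}{\Psi_n(a)}=\dfrac{g'(a)}{g(a)}=\dfrac{d}{da}\log g(a)$, and it suffices to prove that $\log g$ is strictly convex on $(0,\infty)$.

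For the convexity step I would introduce the nonzero finite measure $d\nu_x(t):=t^n e^{-xt}(1-e^{-t})^{-1}\,dt$ on $(0,\infty)$, so that $g(x)=\nu_x\big((0,\infty)\big)$, $-g'(x)=\int t\,d\nu_x$, and $g''(x)=\int t^2\,d\nu_x$. Then
\[
(\log g)''(x)=\frac{g''(x)g(x)-g'(x)^2}{g(x)^2}
=\frac{\big(\int t^2\,d\nu_x\big)\big(\int d\nu_x\big)-\big(\int t\,d\nu_x\big)^2}{g(x)^2}\ge 0
\]
by the Cauchy--Schwarz inequality applied to the functions $t$ and $1$ in $L^2(\nu_x)$. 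Equality in Cauchy--Schwarz would force $t$ to be $\nu_x$-almost everywhere constant, which is impossible since $\nu_x$ has support all of $(0,\infty)$; therefore $(\log g)''>0$ everywhere, so $(\log g)'$ is strictly increasing, which is the desired conclusion.

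I do not foresee a real obstacle. The only items requiring attention are the justification of differentiating under the integral (handled by local domination on compacts in $(0,\infty)$) and noting explicitly that the integrability at $t=0$ is where the hypothesis $n\ge1$ is used; the strictness in Cauchy--Schwarz, which is the one substantive point, follows at once from the positivity of the density $t^n e^{-xt}(1-e^{-t})^{-1}$.
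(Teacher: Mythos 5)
Your proof is correct and is in essence the same argument as the paper's: both establish the strict log-convexity of $|\Psi_n|$ on $(0,\infty)$ and read off the conclusion from $\Psi_{n+1}/\Psi_n = (\log|\Psi_n|)'$. The only difference is packaging — the paper substitutes $y=e^{-t}$ in \eqref{equation psi_n integral rep} to recognize $|\Psi_n(a)|$ as the Mellin transform $M_f(a)$ of $f(y)=\frac{(-\log y)^n}{1-y}\ind_{\{0<y<1\}}$ and then invokes its own Remark~\ref{remark - Mellin consequences}(c) ($\tfrac{\p^2}{\p a^2}\log M_f(a)=\Var[\log X]>0$), whereas you work directly with the Laplace-type integral and make the underlying variance/Cauchy--Schwarz step explicit; the two are the same inequality.
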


\begin{proof}
Fix $n\in \N$ and $a\in (0,\infty)$.  We will show that $\frac{\p^2}{\p a^2}\log |\Psi_n(a)|>0$.  

After substituting $y=e^{-t}$ in \eqref{equation psi_n integral rep} we get
\[
|\Psi_n(a)|=\int_0^\infty y^{a-1}f(y)dy=M_f(a)
\]
where $f(y):=\frac{(-\log y)^n}{1-y}\ind_{\{0<y<1\}}$.  Note that $D(M_f)=(0,\infty)$. Now given a random variable $X\sim m_f(a)$, by part (c) of Remark \ref{remark - Mellin consequences}
\[
\frac{\p^2}{\p a^2}\log |\Psi_n(a)|=\frac{\p^2}{\p a^2}\log M_f(a)=\Var[\log X]>0
\]
since $X$ is non-degenerate.  
\end{proof}

\begin{lemma}\label{lemma -psi1 psi2 combo is positive}
Assume the polymer environment is distributed as in \eqref{polymer environment distribution}.  Then \[
\psi_1^{f^1}(a_1)\psi_2^{f^2}(a_2)+\psi_1^{f^2}(a_2)\psi_2^{f^1}(a_1)>0.
\]
\end{lemma}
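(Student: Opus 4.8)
The plan is to evaluate $\psi_1^{f^j}(a_j)$ and $\psi_2^{f^j}(a_j)$ for each of the four models from Table \ref{table psi_n^f}, using that $\psi_1^{f^j}(a_j)=\Var[\log R^j]>0$ in every case by Remark \ref{remark - Mellin consequences}, so that only the signs of $\psi_2^{f^1}(a_1)$ and $\psi_2^{f^2}(a_2)$ are at issue. Throughout I use the integral representation \eqref{equation psi_n integral rep}: for $x>0$ let $\nu_x$ denote the positive measure $\frac{e^{-xt}}{1-e^{-t}}\,dt$ on $(0,\infty)$, so that $\Psi_1(x)=\int_0^\infty t\,\nu_x(dt)$ and $\Psi_2(x)=-\int_0^\infty t^2\,\nu_x(dt)$, and $\nu_x-\nu_y$ is a positive measure whenever $x<y$.

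In two situations the inequality is immediate because $\psi_2^{f^1}(a_1)$ and $\psi_2^{f^2}(a_2)$ are both nonnegative, so $\psi_1^{f^1}(a_1)\psi_2^{f^2}(a_2)+\psi_1^{f^2}(a_2)\psi_2^{f^1}(a_1)$ is a sum of products of positive numbers: for the inverse-gamma model $\psi_2^{f^1}(a_1)=-\Psi_2(\mu-\theta)>0$ and $\psi_2^{f^2}(a_2)=-\Psi_2(\theta)>0$ since $\Psi_2<0$ on $(0,\infty)$; for the inverse-beta model with $2\theta\le\mu+\beta$ one has $\psi_2^{f^1}(a_1)=\Psi_2(\mu-\theta+\beta)-\Psi_2(\mu-\theta)>0$ and $\psi_2^{f^2}(a_2)=\Psi_2(\mu+\beta-\theta)-\Psi_2(\theta)\ge0$, both by monotonicity of $\Psi_2$.

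It remains to treat the gamma and beta models (where $\psi_2^{f^1}(a_1)<0<\psi_2^{f^2}(a_2)$) and the inverse-beta model with $2\theta>\mu+\beta$ (where $\psi_2^{f^1}(a_1)>0>\psi_2^{f^2}(a_2)$). For the gamma model, inserting the table values and simplifying shows the claim is equivalent to $\dfrac{\Psi_2(\mu+\theta)}{\Psi_1(\mu+\theta)}>\dfrac{\Psi_2(\theta)}{\Psi_1(\theta)}$, which is Lemma \ref{lemma psi are log-convex} with $n=1$. For the beta model, substituting and rearranging reduces the claim to
\[
\frac{\int_0^\infty t^2\,(\nu_\theta-\nu_{\mu+\theta})(dt)}{\int_0^\infty t\,(\nu_\theta-\nu_{\mu+\theta})(dt)}>\frac{\int_0^\infty t^2\,(\nu_{\mu+\theta}-\nu_{\mu+\theta+\beta})(dt)}{\int_0^\infty t\,(\nu_{\mu+\theta}-\nu_{\mu+\theta+\beta})(dt)},
\]
and for the inverse-beta model, after using $\Psi_1(\mu+\beta-\theta)+\Psi_1(\theta)>\Psi_1(\mu+\beta-\theta)-\Psi_1(\theta)>0$ (the second inequality holds since $\Psi_1$ is decreasing and $\mu+\beta-\theta<\theta$) to discard the ``extra'' term, it suffices to prove the analogous inequality with $\nu_\theta-\nu_{\mu+\theta}$ and $\nu_{\mu+\theta}-\nu_{\mu+\theta+\beta}$ replaced by $\nu_{\mu-\theta}-\nu_{\mu+\beta-\theta}$ and $\nu_{\mu+\beta-\theta}-\nu_\theta$. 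Both reduce to the general fact that if $\rho_1,\rho_2$ are finite positive measures on $(0,\infty)$ with non-decreasing density ratio $d\rho_1/d\rho_2$ (monotone likelihood ratio) then $\dfrac{\int t^2\,d\rho_1}{\int t\,d\rho_1}\ge\dfrac{\int t^2\,d\rho_2}{\int t\,d\rho_2}$, with strict inequality when the ratio is strictly increasing: indeed $\int t^2\,d\rho\big/\int t\,d\rho$ is the mean of $\rho$ size-biased by $t$, size-biasing preserves the likelihood-ratio order, and the likelihood-ratio order implies the usual stochastic order. In each instance the density ratio of the two measures has the form $\dfrac{e^{-ct}-e^{-dt}}{e^{-dt}-e^{-et}}=\dfrac{e^{(d-c)t}-1}{1-e^{-(e-d)t}}$ with $c<d<e$, so the needed monotonicity is the elementary fact that for $\alpha,\gamma>0$ the map $t\mapsto\dfrac{e^{\alpha t}-1}{1-e^{-\gamma t}}$ is strictly increasing on $(0,\infty)$; differentiating, the numerator of the derivative is $\alpha e^{\alpha t}+\gamma e^{-\gamma t}-(\alpha+\gamma)e^{(\alpha-\gamma)t}$, which is positive for $t>0$ by the weighted AM--GM inequality with weights $\frac{\alpha}{\alpha+\gamma},\frac{\gamma}{\alpha+\gamma}$.

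I expect the main obstacle to be the bookkeeping in the inverse-beta case: there $\psi_1^{f^2}(a_2)$ is a sum rather than a difference of polygamma values, so extracting a clean monotone-likelihood-ratio comparison requires the preliminary step of bounding that sum below by the corresponding difference. The gamma and beta models and the two ``easy'' cases are then direct computations from Table \ref{table psi_n^f} together with the monotonicity and AM--GM inputs above.
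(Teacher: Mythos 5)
Your proof is correct, but for the beta and inverse-beta models it takes a genuinely different route from the paper. The paper handles the beta case by rewriting the target as a comparison of difference quotients $\frac{\Psi_2(\theta+\mu+\beta)-\Psi_2(\theta+\mu)}{\Psi_1(\theta+\mu+\beta)-\Psi_1(\theta+\mu)}>\frac{\Psi_2(\theta+\mu)-\Psi_2(\theta)}{\Psi_1(\theta+\mu)-\Psi_1(\theta)}$, then invokes Cauchy's mean value theorem to replace each side by $\Psi_3(\xi)/\Psi_2(\xi)$ at intermediate points, and closes with Lemma \ref{lemma psi are log-convex} at $n=2$; for the inverse-beta case it bounds \emph{both} $\psi_1^{f^2}(a_2)$ and $\psi_2^{f^2}(a_2)$ below (dropping a positive $\Psi_1(\theta)$ and a positive $-\Psi_2(\theta)$ respectively), which collapses the target into the $\Psi_2/\Psi_1$ monotonicity inequality with no case split. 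Your proof replaces the Cauchy MVT step by an explicit monotone-likelihood-ratio argument on the signed differences $\nu_x-\nu_y$, with the monotonicity of the density ratio $\frac{e^{\alpha t}-1}{1-e^{-\gamma t}}$ established by direct differentiation and weighted AM--GM; for the inverse-beta model you only lower-bound $\psi_1^{f^2}(a_2)$ (keeping $\psi_2^{f^2}(a_2)$ exact), which forces a case split on the sign of $\psi_2^{f^2}(a_2)$ and a second application of the MLR argument in the nontrivial case. The upshot: your route is more elementary and self-contained (the MLR/AM--GM computation substitutes both for the Cauchy MVT and for the $n=2$ instance of the log-convexity lemma), while the paper's route is shorter because its uniform lower bound for the inverse-beta case avoids the dichotomy, and the polygamma log-convexity lemma does all the ratio-monotonicity work in one stroke. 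Both proofs still rely on the $n=1$ instance of Lemma \ref{lemma psi are log-convex} for the gamma model.
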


\begin{proof}
Recall that $\psi_1^{f^j}$ are always positive and by \eqref{equation psi_n integral rep} $\Psi_n$ has sign $(-1)^{n+1}$ throughout $(0,\infty)$.

For the inverse-gamma model \eqref{model-IG} with fixed constants $\beta>0$ and $\mu>\theta>0$, Table \ref{table psi_n^f} implies that $\psi_2^{f^j}(a_j)>0$ for $j=1,2$.
The conclusion follows immediately.

For the gamma model \eqref{model-G} with fixed positive constants $\beta$, $\mu$,  and $\theta$,  by Table \ref{table psi_n^f}

\begin{align}
\psi_1^{f^1}(a_1)\psi_2^{f^2}(a_2)+\psi_1^{f^2}(a_2)\psi_2^{f^1}(a_1)=-\Psi_1(\theta+\mu)\Psi_2(\theta)+\Psi_1(\theta)\Psi_2(\theta+\mu).\nonumber
\end{align}
The quantity on the right hand side is positive if and only if 
\[
\frac{\Psi_2(\theta+\mu)}{\Psi_1(\theta+\mu)}>\frac{\Psi_2(\theta)}{\Psi_1(\theta)}
\]
which holds true by Lemma \ref{lemma psi are log-convex} with $n=1$.

For the beta model \eqref{model-B} with fixed positive constants $\beta$, $\mu$,  and $\theta$, using Table \ref{table psi_n^f}, 
\begin{align}
\psi_1^{f^1}(a_1)\psi_2^{f^2}(a_2)+\psi_1^{f^2}(a_2)\psi_2^{f^1}(a_1)&>0 &  &\Leftrightarrow\nonumber\\
\frac{\psi_2^{f^1}(a_1)}{\psi_1^{f^1}(a_1)}&>-\frac{\psi_2^{f^2}(a_2)}{\psi_1^{f^2}(a_2)} & &\Leftrightarrow\nonumber\\
\frac{\Psi_2(\theta+\mu+\beta)-\Psi_2(\theta+\mu)}{\Psi_1(\theta+\mu+\beta)-\Psi_1(\theta+\mu)}&>\frac{\Psi_2(\theta+\mu)-\Psi_2(\theta)}{\Psi_1(\theta+\mu)-\Psi_1(\theta)}.\label{inequal temp-1}
\end{align}

By Cauchy's mean value theorem there exist constants $\theta<\xi_1<\theta+\mu<\xi_2<\theta+\mu+\beta$ such that the left and right-hand sides of \eqref{inequal temp-1} equal $\frac{\Psi_3(\xi_2)}{\Psi_2(\xi_2)}$ and $\frac{\Psi_3(\xi_1)}{\Psi_2(\xi_1)}$ respectively.  Lemma \ref{lemma psi are log-convex} with $n=2$ now gives \eqref{inequal temp-1}.

For the inverse-beta model \eqref{model-IB} with fixed  constants $\beta>0$ and  $\mu>\theta>0$, by Table \ref{table psi_n^f}, $\psi_2^{f^1}(a_1)>0$, $\psi_1^{f^2}(a_2)>\psi_1(-\theta+\mu+\beta)$, and $\psi_2^{f^2}(a_2)>\psi_2(-\theta+\mu+\beta)$.  Therefore
\begin{align*}
\psi_1^{f^1}(a_1)\psi_2^{f^2}(a_2)+\psi_1^{f^2}(a_2)\psi_2^{f^1}(a_1)>& \psi_1^{f^1}(a_1)\Psi_2(-\theta+\mu+\beta)+\Psi_1(-\theta+\mu+\beta)\psi_2^{f^1}(a_1)\\
&=\Psi_1(-\theta+\mu)\Psi_2(-\theta+\mu+\beta)-\Psi_1(-\theta+\mu+\beta)\Psi_2(-\theta+\mu).
\end{align*}
Letting $x=-\theta+\mu$, the last line is positive if and only if 
\[
\frac{\Psi_2(x+\beta)}{\Psi_1(x+\beta)}>\frac{\Psi_2(x)}{\Psi_1(x)}
\]
which holds true by Lemma \ref{lemma psi are log-convex} with $n=1$.
\end{proof}

\end{appendices}

\end{document}